\def\Bibtex{{\rm B\kern-.05em{\sc i\kern-.025em b}\kern-0.08em T\kern-.1667em\lower.7ex\hbox{E}\kern-.125emX}}
\theoremstyle{plain}    
 \newtheorem{theorem}{Theorem}[section]
 \numberwithin{equation}{section} 
 \numberwithin{figure}{section} 
 \theoremstyle{plain}
 \theoremstyle{plain}    
 \newtheorem{corollary}[theorem]{Corollary} 
 \theoremstyle{plain}    
 \theoremstyle{plain}    
 \newtheorem{lemma}[theorem]{Lemma} 
 \newtheorem{setup}[theorem]{Setup}
 \theoremstyle{remark}
 \newtheorem{remark}[theorem]{Remark}
 \theoremstyle{definition}
\theoremstyle{definition}
\newtheorem{definition}[theorem]{Definition}
\newcommand{\K}{\mathcal{K}}
\newcommand{\X}{\mathcal{X}}
\newcommand{\V}{\mathcal{V}}
\DeclareMathOperator \Vol {{\rm Vol}}
\DeclareMathOperator \Ric {\textrm{Ric}}
\DeclareMathOperator \ddc{\textrm{dd}^c}
\title[complete K\"ahler--Einstein metrics]{An iterative construction of  complete K\"ahler--Einstein metrics}
\author{Quang-Tuan Dang and Tat Dat T\^o}
\address{Yau Mathematical Sciences Center, Tsinghua University, Beijing 100084, China}
\email{dangqt@mail.tsinghua.edu.cn $\&$ dangquangtuan10@gmail.com}
 \address{Sorbonne Université, Université Paris Cité, CNRS, IMJ-PRG, F-75005 Paris, France
}
	\email{tat-dat.to@imj-prg.fr}
\thanks{Communicated by K. Ono. Received November 4, 2024. Revised September 17, 2025.}
\thanks{2020 {\it Mathematics Subject Classification}. 32Q15, 32W20, 32U05}
 \thanks{{\it Key words and phrases.} Monge-Amp\`ere operators, Bergman kernels, families of  K\"ahler manifolds}
  \thanks{Q.-T. Dang (corresponding author): Yau Mathematical Sciences Center, Tsinghua University}
    \thanks{T.-D. T\^o: Sorbonne Université, Université Paris Cité, CNRS, IMJ-PRG, F-75005 Paris, France}
\begin{document}

\begin{abstract}
 We extend Tsuji's iterative construction of Kähler--Einstein metrics with negative scalar curvature to non‑compact K\"ahler manifolds with bounded geometry, using Berndtsson's method from the compact setting. Consequently, given a holomorphic surjective map $p:X\to Y$, where $X$ is a weakly pseudoconvex K\"ahler manifold and $Y$ is a complex manifold, and where the smooth fibers admit K\"ahler--Einstein metrics with negative scalar curvature and bounded geometry, we show that the fiberwise K\"ahler--Einstein metrics induce a semipositively curved metric on the relative canonical bundle $K_{X/Y}$. Moreover, our approach also applies to the plurisubharmonic variation of cusp K\"ahler--Einstein metrics.
\end{abstract}

\maketitle

\section{Introduction}

In the 1950s, Calabi conjectured that a compact K\"ahler manifold with negative first Chern class admits a unique K\"ahler--Einstein metric. This was famously shown independently by Aubin~\cite{aubin1978equation} and Yau~\cite{yau1978ricci}, leading to various remarkable results in algebraic geometry for such manifolds. A question arises as to whether K\"ahler--Einstein metrics can be obtained as the limit of a sequence of algebraic metrics induced by the pluri-canonical line bundle; cf.~\cite{Yau87-nonlinear}. Through the Tian--Yau--Zelditch asymptotic expansion of the Bergman kernel,  Donaldson~\cite{Donaldson04-embeddings} iteratively constructed, on a polarized manifold $(X, L)$ with discrete automorphism group, a sequence of "balanced metrics" whose limit is a K\"ahler form with constant scalar curvature. Tsuji~\cite{Tsuji10-construction-KE} introduced a different approach 
to obtaining K\"ahler--Einstein metrics with negative scalar curvature by algebraic approximations. 

Beyond the compact setting, it is natural to investigate the existence of complete K\"ahler--Einstein metrics on non-compact K\"ahler manifolds. Initially, Cheng--Yau~\cite{ChengYau80-complete} established necessary geometric conditions for the existence of complete K\"ahler--Einstein metrics with negative scalar curvature on non-compact manifolds. Later, this result was extended by Kobayashi \cite{Kobayashi84-KE-open} and Tian--Yau~\cite{TianYau87-KE-complete}. At the same time, Mok and Yau~\cite{Mok-Yau83-completeness} showed the existence of a complete K\"ahler--Einstein metric with negative scalar curvature on any bounded pseudoconvex domain. Recently, Wu and Yau~\cite{Wu-Yau20invariant} imposed an assumption on the holomorphic sectional curvature to construct a unique complete K\"ahler--Einstein metric with negative Ricci curvature, using a new complex Monge--Amp\`ere equation. We also refer to ~\cite{Tong-KRF, Huang_Lee_Tam_Tong_2019} for other proofs of Wu--Yau's result using the K\"ahler--Ricci flow. 

Motivated by Tsuji's iteration for numerically approximating canonical K\"ahler metrics in~\cite{Tsuji10-construction-KE,Tsuji13-construction-KE-pseudoconvex}, we study  Tsuji's iterative procedure in the non-compact setting. We aim  to obtain uniform convergence for Tsuji's iterations by making use of the non-compact version of the asymptotic expansion of the Bergman kernels. {In particular, our approach applies to general complex Monge--Amp\`ere equations, rather than being restricted to the K\"ahler--Einstein case as in previous works.}
\medskip

To state our result, we first fix some notation and terminology.  
Let $X$ be a complex manifold of dimension $n$. 
Let $L$ be a positively curved Hermitian holomorphic line bundle on $X$ with reference curvature form $\omega = dd^c \phi_L$, where $\phi_L$ denotes the weight of a Hermitian metric $h_L = e^{-\phi_L}$ on $L$. We consider the space $\mathcal{H}_L$ of (possibly singular) Hermitian metrics on $L$ with positive curvature current; such metrics can be written as $h = e^{-\phi}$ with plurisubharmonic (psh) weight $\phi$, and $\omega_\phi$ denotes the associated curvature current. One may also identify a Hermitian metric $h = e^{-\phi}$ on $L$ with the function $\varphi = \phi - \phi_L$, so that $h = h_L e^{-\varphi}$.   

By $H^0(X, L)$ we denote the space of globally holomorphic sections of $L$. 
Let $\mu$ be a positive measure on $X$. 
For any positive integer $k$, we define a Hermitian product on $H^0(X, L^k)$ via the $L^2$ norm
\begin{equation}
    \|s\|^2_{k\phi,\mu} := \int_X |s|^2 e^{-k\phi} \, {\rm d}\mu.
\end{equation}
{Here the integrand $|s|^2 e^{-k\phi}$ may also be written as $|s|^2_{h_L} e^{-k\varphi}$, where $\varphi = \phi - \phi_L$.}

We then define the {\it Bergman function}
\begin{equation}
    B_{k\phi,\mu}(x) = \sup \{\, |s(x)|^2 e^{-k\phi(x)} \colon s \in H^0(X, L^k),\ \|s\|_{k\phi,\mu} \le 1 \,\}.
\end{equation}
The quantity $\K_{k\phi,\mu} = B_{k\phi,\mu} e^{k\phi}$ is the {\it Bergman kernel} associated with $(L, k\phi, \mu)$. In particular,
\(
\frac{1}{k} \log \K_{k\phi,\mu}
\)
defines a Hermitian metric on $L$.

\medskip
Fixing a smooth measure $\Omega$ on $X$, we are interested in the following complex Monge--Amp\`ere equation
\begin{equation}\label{eq:MA_inf}
    \omega^n_{\phi_\infty} = e^{\phi_\infty - \phi_L}\Omega,
\end{equation}
where 
$\phi_\infty$ is an unknown smooth Hermitian metric on $L$ with positive curvature. Set
$\varphi := \phi_\infty - \phi_L$ and $\omega := dd^c \phi_L$. Then the equation \eqref{eq:MA_inf} can be rewritten as 
\begin{equation}
(\omega + dd^c \varphi)^n = e^{\varphi}\Omega.
\end{equation}
{In particular, if the volume form $\Omega$ satisfies $\Ric(\Omega) = -\omega$, then}
$\Ric(\omega_{\phi_\infty}) = -\omega_{\phi_\infty}$, {so} $\omega_{\phi_\infty}$ is a K\"ahler--Einstein metric with Ricci curvature $-1$.
\medskip

We now describe Tsuji's iteration associated with the equation~\eqref{eq:MA_inf}.  
Assume that $\phi$ is a (possibly singular) Hermitian metric on $L$.  
Let $\K_{k\phi,\mu_\phi}$ be the Bergman kernel associated with the weight $k\phi$ and the measure $\mu_\phi := e^{\phi - \phi_L}\Omega / n!$. The quantity
\begin{equation}\label{eq_beta}
\beta_k(\phi) := \frac{1}{k}\bigl( \log \K_{k\phi,\mu_\phi} - \log d_k \bigr),
\end{equation}
defines again a metric on $L$, with

\[
d_k = \left( \frac{k}{2\pi} \right)^n.
\]


Starting from a metric $\phi_1$ on $L$, we inductively obtain a sequence of Hermitian metrics on $L$ by setting $\phi_{k+1}:= \beta_{k}(\phi_k)$. 
Generalizing the works of Tsuji~\cite{Tsuji10-construction-KE}, Song--Weinkove~\cite{SongWeinkove10-negativecurvature} and Berndtsson \cite{berndtsson09-remarks-tsuji'}, we show that  $\phi_k$ uniformly converges to the metric $\phi_\infty$ which satisfies the equation \eqref{eq:MA_inf}. 

In particular, our result applies to solutions of the general complex Monge--Amp\`ere equation \eqref{eq:MA_inf}, without assuming that the equation arises from K\"ahler--Einstein geometry, in contrast with previous works.


\begin{theorem}\label{thm: main} Let $X$ be an $n$-dimensional K\"ahler manifold equipped with a positively curved line bundle $(L,e^{-\phi_L})$. Let $\Omega$ be a smooth volume form on $X$. 
Assume that there exists a smooth Hermitian metric $\phi_\infty$ on $L$ satisfying the complex Monge--Amp\`ere equation 
\begin{equation}
\omega^n_{\phi_\infty}= e^{\phi_\infty-\phi_L}\Omega.
\end{equation}
Assume moreover that $(X,\omega_{\phi_\infty})$ is a K\"ahler manifold with bounded geometry of order $\ell\geq 5$. 
 Let  $\phi_1$ be a continuous metric on $L$ such that $\sup_X|\phi_1- \phi_{\infty}|\leq C$. Let $\phi_k$ be a sequence of metrics on $L$ defined by Tsuji's iteration as described above.  Then 
 $\sup_X| \phi_k-\phi_\infty|\rightarrow 0$
 as $k\rightarrow \infty$ at the rate $\frac{\log k}{k}$. 
\end{theorem}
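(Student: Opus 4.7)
The strategy is to extend Berndtsson's compact-case approach: combine two elementary functorial properties of the operator $\beta_k$ with the identification of $\phi_\infty$ as an approximate fixed point, and then iterate via a Gronwall-type argument to obtain uniform convergence at the desired rate.

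First I would record two direct consequences of the definition. Using that $\mu_\phi = e^{\phi - \phi_L}\Omega/n!$, one finds
$$\K_{k\phi, \mu_\phi}(x) = \sup_{s \in H^0(X, L^k)} \frac{|s(x)|^2}{\int_X |s|^2 e^{-(k-1)\phi - \phi_L}\Omega/n!}.$$
Hence $\beta_k$ is \emph{monotone}: if $\phi \leq \psi$, the denominator decreases, so $\K_{k\phi, \mu_\phi}$ and $\beta_k(\phi)$ increase. Moreover, the substitution $\phi \mapsto \phi + c$ for a constant $c \in \mathbb{R}$ multiplies the denominator by $e^{-(k-1)c}$, giving the \emph{scaling identity}
$$\beta_k(\phi + c) = \beta_k(\phi) + \frac{k-1}{k}\, c.$$

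The main analytic input is the uniform approximate-fixed-point estimate
$$\sup_X \bigl| \beta_k(\phi_\infty) - \phi_\infty \bigr| \leq C/k^2.$$
Since $\phi_\infty$ solves the Monge--Amp\`ere equation, the measure $\mu_{\phi_\infty}$ coincides with the Riemannian volume $\omega_{\phi_\infty}^n/n!$, so $\K_{k\phi_\infty, \mu_{\phi_\infty}}$ is the standard Bergman kernel of $(X, L^k)$ with respect to $\omega_{\phi_\infty}$. Under the bounded-geometry hypothesis of order $\ell \geq 5$, the Tian--Yau--Zelditch expansion
$$\K_{k\phi_\infty, \mu_{\phi_\infty}}(x) = d_k\, e^{k\phi_\infty(x)} \bigl(1 + a_1(x)/k + O(1/k^2)\bigr)$$
holds with error constants independent of $x \in X$; taking $\tfrac{1}{k}\log$ and subtracting $\tfrac{\log d_k}{k}$ yields the estimate. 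Establishing this uniform expansion on a non-compact K\"ahler manifold with bounded geometry --- via localization in geodesic normal coordinates and comparison to a model kernel on $\mathbb{C}^n$, with all constants controlled by bounded-geometry data --- is the main technical obstacle, being the non-compact analogue of the peak-section or Bochner--Kodaira analysis in the spirit of Ma--Marinescu.

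Finally I iterate. Let $c_k := \sup_X |\phi_k - \phi_\infty|$, so $c_1 \leq C$ by hypothesis. From $\phi_\infty - c_k \leq \phi_k \leq \phi_\infty + c_k$, the monotonicity and scaling of $\beta_k$ give
$$\beta_k(\phi_\infty) - \tfrac{k-1}{k}\, c_k \leq \phi_{k+1} \leq \beta_k(\phi_\infty) + \tfrac{k-1}{k}\, c_k,$$
and combining with the approximate-fixed-point estimate yields the recursion $c_{k+1} \leq \frac{k-1}{k}\, c_k + C/k^2$. Setting $a_k := k\, c_k$, this reads $a_{k+1} \leq (1 - 1/k^2)\, a_k + C(k+1)/k^2 \leq a_k + 2C/k$, and telescoping from $a_1 \leq C$ gives $a_k \leq 2C\log k + O(1)$, whence $c_k = O(\log k/k)$, the claimed convergence rate.
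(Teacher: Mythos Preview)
Your proposal is correct and follows essentially the same approach as the paper: the paper's key Lemma~\ref{lem_key} is precisely your monotonicity-plus-scaling observation (derived there via the extremal characterization $\K_{k\phi}\geq e^{(k-1)C_1}\K_{k\phi_\infty}$), combined with the uniform Bergman expansion of Theorem~\ref{thm: Bergman} to get $\beta_k(\phi_\infty)=\phi_\infty+O(1/k^2)$, and the iteration is then resolved by the same telescoping argument you give. The only cosmetic difference is that you name monotonicity and scaling explicitly, whereas the paper packages them together in a single inequality.
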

{ The hypothesis of bounded geometry is satisfied, for example, by the K\"ahler–Einstein metric on a strongly pseudoconvex domain in  $\mathbb{C}^n$ (cf. \cite{ChengYau80-complete}), or by K\"ahler metrics with cusp singularities along some divisor (cf. \cite{Kobayashi84-KE-open, TianYau87-KE-complete}).}
\begin{remark}
In particular, when $L=K_X$ is an ample line bundle equipped with a smooth Hermitian metric $e^{-\phi_L}$, we can take $\Omega=e^{\phi_{K_X}}$ as a smooth volume form. For any (smooth) metric $e^{-\phi}$ on $K_X$, the $L^2$ norm on $H^0(X,K_X^{(k+1)})$ is defined by   \[\|\sigma\|_{k\phi,\mu_\phi}^2:=c_n\int_X \sigma\wedge\Bar \sigma e^{-(k+1)\phi} e^\phi=\int_X|\sigma|^2e^{-k\phi}.\] Similarly, we define Tsuji's iteration, which is analogous to that in~\cite{berndtsson09-remarks-tsuji'}. The only difference is that the constant  $d_k$ is the dimension of $H^0(X, K_X^{(k+1)})$ in the compact setting. It is known that $\dim H^0(X, L^k)=O(k^n)$ for ample line bundle $L$, so our definition for $d_k$ is the one of Berndtsson modulo a uniform constant. 
\end{remark}

As mentioned above, one considers the special case where $L=K_X$ is ample, it follows from~\cite{ChengYau80-complete, Wu-Yau20invariant} that, under extra conditions, there exists a unique K\"ahler--Einstein metric $\omega_{\rm KE}$  on $X$   such that  $\Ric (\omega_{\rm KE})=-\omega_{\rm KE}$ with
$$\omega_{\rm KE}= \ddc\log (e^u\omega^n)= \ddc\log (\omega^n) +dd^c u$$ for a bounded smooth function $u$. We denote by $\phi_\omega =\log \omega^n$ and $\phi_{\rm KE}=\log (e^u\omega^n) $ as two Hermitian metrics on $K_X$.  Then, we obtain the following Monge--Amp\`ere equation
\begin{equation}\label{eq: KE}
    (\ddc\phi_{\rm KE})^n=e^{\phi_{\rm KE}-\phi_\omega} \omega^n.
\end{equation}
In this case, we define Tsuji's iteration $ \phi_{k+1}=\beta_k(\phi_k)$ with $\beta_k$ defined as in \eqref{eq_beta} and
\begin{equation}\label{eq:d_k_KE}
    d_k=\left(\frac{k }{2\pi}\right)^n  \max\left\{\frac{1}{2},  1+ \frac{{\rm Scal}(\omega_{\rm KE})}{2k}\right\}=\left(\frac{k  }{2\pi}\right)^n\max\left\{\frac{1}{2},  1- \frac{n}{2k}\right\}.
\end{equation}
Then, we obtain the non-compact version of Tusji's iteration (cf. \cite{Tsuji10-construction-KE,SongWeinkove10-negativecurvature, berndtsson09-remarks-tsuji'}). 
\begin{theorem}   
\label{thm: construction-KE}
With assumptions in Theorem~\ref{thm: main}, suppose further that $L=K_X$ with $\phi_\infty=\phi_{\rm KE}$ satisfying the equation~\eqref{eq: KE}.
 Let  $\phi_1$ be a continuous metric on $K_X$ such that $\sup_X|\phi_1- \phi_{\rm KE}|\leq C$. Let $\phi_k$ be a sequence of metrics on $K_X$, defined by Tsuji's iteration as described above.  Then 
 $\sup_X| \phi_k-\phi_{\rm KE}|\rightarrow 0$
 as $k\rightarrow \infty$ at the rate $\frac{1}{k}$. 
\end{theorem}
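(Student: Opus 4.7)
The plan is to re-run the monotonicity-and-iteration scheme developed in the proof of Theorem~\ref{thm: main}, while feeding it a one-step Bergman approximation that is one order of $k$ sharper, available specifically when $L=K_X$ and $\phi_\infty=\phi_{\rm KE}$. The reason the stronger rate $1/k$ is attainable lies in the choice \eqref{eq:d_k_KE}: this normalization absorbs not merely the leading coefficient $(k/2\pi)^n$ of the Tian--Yau--Zelditch expansion but also its first subleading term, which for a K\"ahler--Einstein metric becomes the constant $-n/(2k)$.

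Concretely, with $\Omega=e^{\phi_\omega}$ as in the preceding remark one has $\mu_{\phi_{\rm KE}}=\omega_{\rm KE}^n/n!$, and the second-order Tian--Yau--Zelditch expansion, uniform on a manifold of bounded geometry of order $\ell\geq 5$, reads
\[\K_{k\phi_{\rm KE},\mu_{\phi_{\rm KE}}}(x)=\left(\frac{k}{2\pi}\right)^{n}\!\left(1+\frac{{\rm Scal}(\omega_{\rm KE})(x)}{2k}+O(k^{-2})\right),\]
uniformly in $x\in X$. For ${\rm Ric}(\omega_{\rm KE})=-\omega_{\rm KE}$ one has ${\rm Scal}(\omega_{\rm KE})\equiv -n$, and this matches $d_k$ as soon as the $\max$ in \eqref{eq:d_k_KE} is resolved by the right branch (i.e.\ for $k\geq n$; the clamp at $1/2$ serves only to keep $d_k$ bounded away from zero for small $k$). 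Consequently
\[\sup_X\bigl|\beta_k(\phi_{\rm KE})-\phi_{\rm KE}\bigr|=\frac{1}{k}\sup_X\bigl|\log(\K_{k\phi_{\rm KE},\mu_{\phi_{\rm KE}}}/d_k)\bigr|=O(k^{-3}),\]
a full power of $k$ better than the one-step error produced by the naive normalization $d_k=(k/2\pi)^n$ available to Theorem~\ref{thm: main}.

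Plugging this sharper fixed-point error into the Berndtsson-type comparison already used in the proof of Theorem~\ref{thm: main} yields, for $\epsilon_k:=\sup_X|\phi_k-\phi_{\rm KE}|$, a recursion of the form
\[\epsilon_{k+1}\leq\Bigl(1-\frac{c}{k}\Bigr)\epsilon_k+O(k^{-3}),\]
in which the contraction factor $1-c/k$ comes from the $L^2$-comparison of Bergman kernels whose weights differ by at most $\epsilon_k$, divided by $k$ in the definition of $\beta_k$. A variation-of-constants / telescoping argument against $\prod_{j<k}(1-c/j)\sim k^{-c}$ then gives $\epsilon_k=O(1/k)$ from the initial bound $\epsilon_1\leq C$, the inhomogeneous $O(k^{-3})$ term contributing only $O(1/k^2)$. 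The main technical obstacle I anticipate is establishing the uniform second-order Tian--Yau--Zelditch expansion on the non-compact manifold $(X,\omega_{\rm KE})$ with remainder controlled purely in terms of the bounded-geometry data of order $\ell\geq 5$; once this input is in hand, the rest of the argument is a direct specialization of the proof of Theorem~\ref{thm: main} with the sharper one-step error at each iterate.
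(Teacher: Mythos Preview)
Your proposal is correct and follows essentially the paper's route: the uniform second-order Bergman expansion (Theorem~\ref{thm: Bergman}) together with ${\rm Scal}(\omega_{\rm KE})\equiv -n$ sharpens the one-step error in Lemma~\ref{lem_key} to $\varepsilon_k=O(k^{-3})$, and the same recursion $C_{k+1}\ge\frac{k-1}{k}C_k-\varepsilon_k$ as in the proof of Theorem~\ref{thm: main} then yields the $1/k$ rate. One harmless slip: with contraction factor exactly $(k-1)/k$ the inhomogeneous $O(k^{-3})$ source contributes $O(1/k)$, not $O(1/k^2)$, to $\epsilon_k$ (telescoping gives $(k-1)\epsilon_k$ bounded), but the conclusion is unaffected.
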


This result also extends the uniform convergence obtained by the second author~\cite{To_2022} and, more recently, by Yoo~\cite{Yoo_2024}, in the setting of pseudoconvex domains with bounded geometry. 

\medskip
The idea of the proof is to use the following uniform asymptotic expansion of the Bergman function:
\begin{equation}
\label{eq_bergman_asymp_0}\left|B_{k\phi, \omega_\phi^{[n]}} - \left(\frac{k}{2\pi} \right)^n  \left( 1+ \frac{b_1}{2k}\right)\right|_{\mathcal{C}^0} \leq Ck^{n-2}.
\end{equation}
where $\omega^{[n]}:= \frac{\omega^n}{n!}$  and $C>0$ depends only on the bounded geometry of $(X,\omega_\phi)$, extending Berndtsson's work~\cite{berndtsson09-remarks-tsuji'} in the compact setting. A more general asymptotic expansion in $C^k$‑norms was proved by Ma–Marinescu \cite[Problem~6.1]{MaMarinescu-book} (see also \cite[Theorem~6]{Ma_Marinescu_2015} and \cite{Keller-notes-Bergman}). For the reader's convenience, we provide an alternative proof of the uniform Bergman expansion \eqref{eq_bergman_asymp_0} (see Theorem~\ref{thm: Bergman}) using Tian's peak‑section method \cite{Tian90-metrics-algebraic} (see also \cite{Ruan_1998,Lu00-expansion}) together with the Laplace method as in \cite{donaldson-2009-notes,charles15-bergmankernel}. We refer to the works \cite{deMonvel_Sjostrand_75,Tian90-metrics-algebraic,Catlin_1997,Zelditch_1998,Lu00-expansion,Charles_2003,Dai_Liu_Ma_06,MaMarinescu-book,Berman_Berndtsson_Sjostrand_2008,Ma_Marinescu_2015} for further background on the asymptotic behavior of Bergman kernels on compact K\"ahler manifolds.

\medskip
As a consequence, we apply Theorem \ref{thm: construction-KE} to study the variation of complete K\"ahler--Einstein metrics, extending the results in \cite{Tsuji10-construction-KE, Schumacher12-positivity-bundles, Berman_2013} to the non-compact setting. We refer to \cite{paun17-relative-nef, Guenancia_2020, Cao_Guenancia_Paun_2021} for more results in the compact case.

\begin{theorem}\label{thm: psh-variation}
  Let $p:X \to Y$  be a holomorphic surjective map from a weakly pseudoconvex K\"ahler manifold $X$ to a complex manifold $Y$. Let $Y^\circ\subset Y$ be a set of points that are not critical values of $p$ in $Y$ and set $X^\circ=p^{-1}(Y^\circ)$ and $X_y= p^{-1}(\{y\})$. 
    Assume  that 
\begin{itemize}
    \item[(i)] for every $y\in Y^\circ $, $X_y$ admits a K\"ahler--Einstein metric $\omega_{\textrm{KE},y}$ with negative Ricci curvature such that $(X_y,\omega_{\textrm{KE},y})$ has bounded geometry of order $\ell\geq 5$, uniformly in $y$ on compact subsets of $Y^\circ$,

    \item[(ii)] $K_{X/Y}$ admits a singular Hermitian metric $h_{0}=e^{-\phi_{0}}$ with positive curvature current and for every $y_0\in Y^\circ$, there exists a neighborhood $U\Subset  Y^\circ $ of $y_0$ such that  $\|\phi_{0, y}-\phi_{{{\rm KE}, y}}\|_{L^\infty(X_y)}\leq C(U)$ for every $y\in U$.
    
 
\end{itemize}
Then the Hermitian metric on $(K_{X/Y})|_{X^\circ}$ induced by the fiberwise K\"ahler--Einstein metrics is positively curved.  Moreover, if we have the additional condition that
\begin{itemize}
\item[(iii)] the K\"ahler--Einstein potentials $(\phi_{{\rm KE},y})_{y\in Y^\circ}$ are bounded from above near the singular fibers,
\end{itemize}
then this metric extends canonically across $X\setminus X^\circ$ to a positively curved metric on $K_{X/Y}$. 
\end{theorem}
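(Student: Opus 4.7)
Our strategy is to realize the fiberwise K\"ahler--Einstein potential as a locally uniform limit on $X^\circ$ of a sequence of positively curved metrics $\phi_k$ on $K_{X/Y}$, obtained by performing Tsuji's iteration from Theorem~\ref{thm: construction-KE} simultaneously on every smooth fiber; positivity of the limit will then follow from positivity of each $\phi_k$, which is produced by a Berndtsson-type argument at each step, in the spirit of \cite{berndtsson09-remarks-tsuji'}.

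Concretely, starting from $\phi_1:=\phi_{0}$ as in hypothesis (ii) (regularized on $X$ if necessary so that fiberwise restrictions are continuous, without destroying the global semi-positivity or the uniform sup-bound in (ii)), we define inductively $\phi_{k+1,y}:=\beta_k(\phi_{k,y})$ fiberwise using the Bergman kernel of $(K_{X_y}^{k+1},k\phi_{k,y},\mu_{\phi_{k,y}})$ as in \eqref{eq_beta}. The key inductive claim is that each $\phi_k$ defines a positively curved Hermitian metric on $K_{X/Y}|_{X^\circ}$: assuming this for $\phi_k$, Berndtsson's positivity of direct images adapted to the weakly pseudoconvex family $p$ implies that $\log \K_{k\phi_k,\mu_{\phi_k}}$ is plurisubharmonic in $(x,y)$ on $X^\circ$, and hence so is $\phi_{k+1}$. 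By Theorem~\ref{thm: construction-KE}, together with the local uniformity in $y$ of the bounded geometry (i) and of the initial bound (ii), the convergence $\phi_{k,y}\to\phi_{\mathrm{KE},y}$ is locally uniform in $y\in Y^\circ$, so $\phi_{\mathrm{KE}}$ is the locally uniform limit on $X^\circ$ of positively curved metrics on $K_{X/Y}$, and is therefore itself positively curved. This proves the first conclusion.

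For the extension across the singular locus, the critical set of $p$ is an analytic subset of $Y$, so $X\setminus X^\circ$ is a proper analytic subset of $X$. Hypothesis (iii) says precisely that the local psh potential of $\phi_{\mathrm{KE}}$ is locally bounded above near $X\setminus X^\circ$, and the classical extension theorem for plurisubharmonic functions across analytic sets then produces a unique psh extension to $X$, which is the desired positively curved metric on $K_{X/Y}$. The main obstacle is the Berndtsson-positivity step: one must establish a non-compact, weakly-pseudoconvex-family version of Berndtsson's direct-image positivity ensuring that the relative Bergman kernels are well-defined, finite, and vary plurisubharmonically despite the non-compactness of the fibers. We expect this to follow by combining the classical argument of \cite{berndtsson09-remarks-tsuji'} with the uniform bounded-geometry Bergman expansion of Theorem~\ref{thm: Bergman}; once this is granted, the remaining steps are essentially formal.
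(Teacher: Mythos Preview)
Your proposal is correct and follows essentially the same route as the paper: run Tsuji's iteration simultaneously on all smooth fibers starting from $\phi_0$, propagate semi-positivity of $\phi_k$ inductively via positivity of the relative Bergman kernel, and pass to the locally uniform limit using Theorem~\ref{thm: construction-KE} together with the local uniformity of (i) and (ii); the extension under (iii) is then the standard Riemann-type extension of psh functions across an analytic set.

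One correction regarding the step you flag as the main obstacle: the positivity of the relative Bergman kernel in the weakly pseudoconvex setting (this is the paper's Theorem~\ref{thm_variation}) is \emph{not} obtained from the Bergman asymptotic expansion of Theorem~\ref{thm: Bergman}, but from an Ohsawa--Takegoshi $L^2$ extension theorem, which yields the sub-mean-value inequality for $\log\K$ in the base direction. The asymptotic expansion enters only in the convergence of the iteration (and in checking that the relevant $H^0$ spaces are nonzero at each step so that Theorem~\ref{thm_variation} applies nontrivially), not in the positivity of the individual iterates. Also, no regularization of $\phi_0$ is needed: the relative Bergman kernel construction and Theorem~\ref{thm_variation} work directly with singular metrics, and Lemma~\ref{lem_key} only requires the two-sided bound $|\phi_{0,y}-\phi_{\mathrm{KE},y}|\leq C$, not continuity.
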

As a consequence, we recover the positivity of the variation of K\"ahler--Einstein metrics on pseudoconvex domains proved in \cite{Tsuji13-construction-KE-pseudoconvex, Choi15-variations-pseudoconvex, Choi_Yoo_2022}, following Tsuji's argument \cite{Tsuji13-construction-KE-pseudoconvex} (cf. Theorem \ref{thm_var_psh_domain}). See also Choi--Yoo~\cite{Choi_Yoo_2021} for a similar result for strongly pseudoconvex domains contained in a K\"ahler manifold, using the Schumacher method~\cite{Schumacher12-positivity-bundles}.

\medskip
Furthermore, the statement of Theorem \ref{thm: psh-variation} also holds in the following setting (cf. Theorem \ref{thm: variation-psh2}).   Let $p:\overline{X} \to Y$ be a holomorphic surjective and proper map between compact K\"ahler manifolds with relative dimension $n$. Let $X:=\overline{X}\setminus D$ where $D = \sum_{i=1}^\ell D_i$ is a reduced divisor with simple normal crossings. Assume that $Y^\circ$ is the largest Zariski open subset of $Y$ such that, if $\overline{X}^\circ := p^{-1}(Y^\circ)$, then every fiber $\overline{X}_y := p^{-1}(\{y\})$ is smooth and, $D_y:=D\cap {\overline{X}_y}$ has simple normal crossings. We denote $X^\circ=\overline{X}^\circ \setminus D $. Then we will show that the current obtained by gluing the fiberwise K\"ahler--Einstein metrics with cusp singularities along $D$ is positive.


Recall that $\omega_y$ is said to have {\em cusp singularities} (or {\em Poincar\'e singularities}) along $D_y$, locally, if $D_y=(z_1\cdots z_\ell=0)$, then $\omega_y$ is quasi-isometric to the cusp
metric: \[\omega_{\rm cusp}:=\sum_{j=1}^\ell\frac{i dz_j\wedge d\Bar z_j}{|z_j|^2\log^2|z_j|^2}+\sum_{k=\ell+1}^n idz_k\wedge d\Bar z_k. \]
Assume that $K_{\overline{X}_y} + D_y$ is ample for every $y \in Y^\circ$. By the well-known result of Kobayashi \cite{Kobayashi84-KE-open} and Tian--Yau~\cite{TianYau87-KE-complete}, building upon Cheng--Yau’s method \cite{ChengYau80-complete}, for each fiber, there exists a unique complete K\"ahler metric $\omega_y$ on $X_y := \overline{X}_y \setminus D_y$ with cusp singularities along $D_y$ and satisfying $\Ric(\omega_y) = -\omega_y$ on $X_y$.  The metric $\omega_y$ extends as a positive current on $\overline{X}_y$; it satisfies \begin{equation}\label{eq: KE-cusp0}
    \Ric(\omega_y)=-\omega_y+[D_{y}]
\end{equation} in the sense of currents on $\overline{X}_y$, where $[D_y]$ is the integration current along $D_y$. We also refer to \cite{bando90-negative-open,Wu08-negative-quasi-proj, Auvray17-Poincare-metrics-divisor, guenancia14-kahler-mixed, Berman-Guenancia14-logcanonical, DiNezza-Lu-cMA-quasi, Guenancia_Wu_2016, Guenancia20-cones-cusps, Biquard-Guenancia-22-degnerating,
dang2023kahler} for related results on complete K\"ahler--Einstein metrics with cusp or mixed singularities.

Over $X^\circ$, the fiberwise K\"ahler--Einstein metrics $\omega_y$ with cusp singularities glue together to define a current $T$. Applying an analogue of Theorem \ref{thm: psh-variation} in this setting (cf. Theorem \ref{thm: variation-psh2}), we obtain the following.  
\begin{theorem}\label{thm:family_cusp} With the setup above, 
   the fiberwise cusp K\"ahler--Einstein metrics $\omega_y$ satisfying \eqref{eq: KE-cusp0} glue to define a current $T$ on $\overline{X}$ such that:
    \begin{enumerate}[label=(\roman*)]
        \item $T$ is positive on $X^\circ = \overline{X}^\circ \setminus D$;
        \item $T$ extends canonically across $(X \setminus X^\circ)\cup D$ to a closed positive current on $\overline{X}$.
    \end{enumerate}
\end{theorem}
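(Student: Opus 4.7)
The plan is to reduce Theorem~\ref{thm:family_cusp} to Theorem~\ref{thm: variation-psh2}, the cusp analog of Theorem~\ref{thm: psh-variation}, applied to the relative log canonical bundle $K_{\overline{X}/Y}+D$. The reformulation goes as follows: on each smooth fiber $\overline{X}_y$ (with $y\in Y^\circ$) the current $\omega_y$ is smooth on $X_y = \overline{X}_y\setminus D_y$ and satisfies $\Ric(\omega_y) = -\omega_y + [D_y]$, equivalently $\omega_y^n = e^{\phi_{{\rm KE},y}-\phi_{{\rm ref},y}}\Omega_y$ for a smooth reference volume form $\Omega_y$ whose potential has the standard logarithmic pole along $D_y$. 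Thus $\phi_{{\rm KE},y}$ is a singular Hermitian metric on $(K_{\overline{X}/Y}+D)|_{\overline{X}_y}$, and the quasi-isometry of $\omega_y$ to $\omega_{\rm cusp}$ forces $\phi_{{\rm KE},y}$ to be locally bounded from above near $D_y$.

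Next I would verify the hypotheses of Theorem~\ref{thm: variation-psh2} locally on $Y^\circ$. Fix $y_0\in Y^\circ$ and an open neighborhood $U\Subset Y^\circ$ of $y_0$. Ampleness of $K_{\overline{X}_y}+D_y$ combined with the results of Kobayashi and Tian--Yau yields existence and uniqueness of $\omega_y$, while the estimates of Cheng--Yau, Auvray, Guenancia and Biquard--Guenancia furnish bounded geometry of $(X_y,\omega_y)$ of order $\ell\geq 5$ locally uniformly in $y\in U$. A semi-positive reference metric $h_0$ on $(K_{\overline{X}/Y}+D)|_{\overline{X}^\circ}$ can be produced from a suitable relatively big approximation, and the $L^\infty$-control on $\phi_{0,y}-\phi_{{\rm KE},y}$ reduces to a bounded comparison of volume forms on the smooth model. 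Applying Theorem~\ref{thm: variation-psh2} then produces a positively curved Hermitian metric on $(K_{X/Y}+D)|_{X^\circ}$ coinciding with $\phi_{{\rm KE},y}$ fiberwise; this is precisely the current $T$ and establishes~(i).

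For~(ii), I would show that $T$ admits a local plurisubharmonic potential bounded from above near each point of $(X\setminus X^\circ)\cup D$. Near $D$ this is automatic from the explicit $-\log\log|z|^2$ growth of the cusp potential, which is locally bounded above. Near a degenerate fiber one uses the upper-bound comparison $\phi_{{\rm KE},y}\leq \phi_{0,y}+C(U)$ on the smooth locus, together with the boundedness of the reference $\phi_{0,y}$, to bound the potential of $T$ locally from above in a punctured neighborhood. Once such local upper bounds are in place, the standard extension theorem for closed positive currents across pluripolar sets (Skoda--El Mir, or equivalently unique extension of plurisubharmonic functions across analytic subsets) yields a canonical extension of $T$ to all of $\overline{X}$.

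The main technical obstacle is to establish the \emph{locally uniform} bounded geometry of order $\ell\geq 5$ for the cusp K\"ahler--Einstein family, since higher-order fiberwise estimates near the divisor $D$ require delicate quasi-coordinate arguments in the spirit of Cheng--Yau and Tian--Yau, and uniformity in the parameter $y$ must survive as the fibers vary. A secondary difficulty will be constructing the global semi-positive reference metric $h_0$ with sufficient fiberwise control of $\phi_{0,y}-\phi_{{\rm KE},y}$ to feed into Theorem~\ref{thm: variation-psh2}; this may require approximation arguments within the relatively ample cone of $K_{\overline{X}/Y}+D$.
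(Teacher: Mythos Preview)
Your overall strategy matches the paper's: both reduce to Theorem~\ref{thm: variation-psh2} applied to $K_{\overline{X}/Y}+D$, verify bounded geometry of the cusp K\"ahler--Einstein fibers locally uniformly via the Kobayashi/Tian--Yau/Cheng--Yau theory, and then extend across the bad locus by bounding the potential from above. The paper makes the reference metric completely explicit: it takes $\phi_0=\phi_\omega+\sum_i\phi_i-\sum_i\log(\log|s_i|^2)^2$, the Poincar\'e-type (Carlson--Griffiths) metric, so that $\psi_y:=\phi_{{\rm KE},y}-\phi_{0,y}$ solves $(\omega_{{\rm P},y}+dd^c\psi_y)^n=e^{\psi_y+F}\omega_{{\rm P},y}^n$ with $F$ bounded, whence Yau's generalized maximum principle gives $\sup_{X_y}|\psi_y|\leq \sup|F|$ locally uniformly in $y$. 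Your ``relatively big approximation'' for $h_0$ is vague by comparison, and it is not clear how you would get the $L^\infty$ comparison without something like the Poincar\'e reference.

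There is a genuine gap in your extension argument near the degenerate fibers. The inequality $\phi_{{\rm KE},y}\leq \phi_{0,y}+C(U)$ is only established for $y$ in $U\Subset Y^\circ$, and $C(U)$ may (and typically does) blow up as $U$ exhausts $Y^\circ$; it therefore gives no upper bound on the potential in a neighborhood of a singular fiber. This is exactly the nontrivial step, and the paper does not attempt to get it from the fiberwise comparison: instead it invokes the argument of \cite[\S 6]{Guenancia_2020} based on P\u aun \cite[\S 3.3]{paun17-relative-nef}, which produces a uniform-in-$y$ upper bound by a separate mechanism (roughly, positivity of an auxiliary relative metric plus a maximum-principle/approximation argument across the family). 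You should either reproduce that argument or cite it; the Skoda--El~Mir step is fine once the upper bound is in hand, but your route to that upper bound does not work as stated. The extension across $D$ is essentially what you say, and the paper phrases it via the integrability criterion of \cite[Lemma~3.7]{Berman-Guenancia14-logcanonical}.
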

We remark that a similar result for the family of twisted conic K\"ahler--Einstein metrics was obtained by Guenancia~\cite{Guenancia_2020}, and a result for the family of cusp K\"ahler--Einstein metrics was proved by Naumann~\cite{Naumann22-positivity} (cf. also \cite[Section 8.1]{Guenancia_2020}). In particular, they used Schumacher’s approach~\cite{Schumacher12-positivity-bundles}, whereas our proof follows Tsuji’s method~\cite{Tsuji10-construction-KE}.  

\subsection*{Organization of the paper} In Section~\ref{sect: background}, we start with preliminary material on (singular) Hermitian metrics on line bundles and local quasi-coordinates. The main Theorems~\ref{thm: main}  and~\ref{thm: construction-KE} are proved in Section~\ref{sect: main thm}, where we also establish an asymptotic expansion for the Bergman kernel associated with weighted spaces. Section~\ref{sect: pos-variation} is devoted to the positivity of the variation of complete K\"ahler--Einstein metrics, where Theorems~\ref{thm: psh-variation} and ~\ref{thm:family_cusp} are proved.
 
\subsection*{Acknowledgement} The authors are grateful to Vincent Guedj and Henri Guenancia for their support and suggestions. We would like to thank Bo Berndtsson for sharing his paper~\cite{berndtsson09-remarks-tsuji'}. We also thank J. Cao, L. Charles, X. Ma, and S. Yoo for their interest in this work and valuable discussions.  The authors would like to thank the referee for useful comments
and suggestions. Part of this work was done while the second author was visiting Vietnam Institute for Advanced Study in Mathematics (VIASM), and he would like to thank VIASM for their hospitality. 
Q. T. Dang is partially supported by the PARAPLUI ANR-20-CE40-0019 project.
T. D. T\^o is partially supported by ANR-21-CE40-0011-01 (research project MARGE), PEPS-JCJC-2024 (CRNS), and Tremplins-2024 (Sorbonne University).

\subsection*{Ethics declarations} The authors declare that they have no conflict of interest.

\section{Notation and background}\label{sect: background}
\subsection{General notation} Throughout the paper,
 $(X,\omega)$ denotes a K\"ahler manifold of dimension $n$ equipped with a K\"ahler metric $\omega$.  We denote by $d=\partial+\Bar{\partial}$ and ${d^c}=\frac{i}{4}(\Bar{\partial}-\partial)$ so that $\ddc=\frac{i}{2}\partial\Bar{\partial}$.
 

Let $L$ be a Hermitian holomorphic line bundle on $X$. The (singular) Hermitian metric on $L$ will be written as $h=e^{-\phi}$, and we will refer to the function $\phi$ as a {\em weight} on $L$. In practice, $\phi$ is understood as a collection of locally integrable functions $\phi^U$ on trivializing open sets $U$, called {\em local weights}. If $e^U$ is a local holomorphic frame of $L$ over $U$, then $|e^U(z)|_\phi^2 := e^{-\phi^U(z)}$.  We will use the weight $\phi$ instead of $h=e^{-\phi}$ for a Hermitian metric on $L$.
The set of weights on $L$ forms an affine space modeled on $L^1_{\rm loc}(X)$. Namely, if $\phi_1$, $\phi_2$ are two weights on $L$, their difference $\phi_1-\phi_2$ is a locally integrable function on $X$.

 Equivalently, a weight may be viewed as a function on the total space of the dual bundle $L^*$ satisfying the log‑homogeneity property $\phi(\lambda v)=\log|\lambda|+\phi(v)$ for all non-zero $v\in L^*$, $\lambda\in\mathbb{C}$. 
 A section $s \in H^0(X,L)$ induces a weight on $L$, denoted $\log|s|$, defined by $\log|s|(v) := \log |\langle s, v\rangle|$.
The pointwise norm of $s$ with respect to the weight $\phi$ is \[ |s|^2_\phi:=|s|^2e^{-\phi}.\]
If $\phi$ is in the class $\mathcal{C}^2$, i.e., it has continuous derivatives of order two the curvature form of the metric $\phi$ is the global form on $X$, locally expressed as $\ddc\phi$ which represents the first Chern class $c_1(L)$ in real de Rham cohomology.  One should note that the notation $\ddc\phi$ is only symbolic: the form is not exact in general. The curvature form of a smooth metric is said to be {\em positive} if the local Hermitian matrix $(\phi_{i\bar j}) = \left(\frac{\partial^2 \phi}{\partial z_i \partial \bar z_j}\right)$ is positive definite.

In general, the curvature $\ddc\phi$ is still well defined as a $(1,1)$‑current, since each local weight $\phi^U$ is only required to be locally integrable. The curvature current of a singular metric $e^{-\phi}$ is said to be {\em positive} when the local weights $\phi^U$ are plurisubharmonic (psh).  We use the notations $\omega_\phi$ and $\ddc\phi$ interchangeably for the curvature form/current of the weight $\phi$.

Given a positive measure $\mu$ on $X$ and a weight $\phi$ on $L$, we obtain an $L^2$ norm on $H^0(X,L^k)$ defined by
\[ \|s\|^2_{k\phi,\mu}:=\int_X|s|^2e^{-k\phi} \,{\rm d}\mu.\]
The {\em Bergman kernel of order $k$} associated with $(L,\phi,\mu)$ is defined for any $x\in X$ by 
\[ \K_{k\phi,\mu}(x):=\sup\{|s(x)|^2: s\in H^0(X,L^k) , \|s\|_{k\phi,\mu}\leq 1\}.\]
Throughout the paper, the symbol $C$ denotes a positive constant whose value may vary from line to line.

\subsection{Bounded geometry on non-compact K\"ahler manifolds}
We recall the definition of bounded geometry for  K\"ahler manifolds (cf. \cite{Kobayashi84-KE-open,ChengYau80-complete,Wu-Yau20invariant}). Denote by $B_{\mathbb{C}^n}(0,r)$ the open ball centered at the origin $0\in\mathbb{C}^n$ of radius $r>0$ with respect to the Euclidean metric $\omega_{\mathbb{C}^n}$.
\begin{definition}\label{def:bd_geo}
Suppose $(X,\omega)$ is a $n$-dimensional K\"ahler manifold. We say that $(X,\omega)$ has {\em bounded geometry} of order $k\in\mathbb{N}$ if there exists $0<r$ such that for any $p\in X$ there is  a domain $U$ in $\mathbb{C}^n$ and a  holomorphic map   $\psi:U  \rightarrow X $ of maximal rank everywhere,  satisfying 
\begin{enumerate}
    \item $B_{\mathbb{C}^n}(0,r)\subset U $ and $\psi(0)=p;$
    
    \item  On $U$, there exists a constant $c>0$ depending only on $n$, $r$ such that $$c^{-1}\omega_{\mathbb{C}^n}\leq \psi^*\omega \leq c\omega_{\mathbb{C}^n };$$
    \item  there is a constant $A_k>0$ depending only on $k$ such that
$$\sup_{x\in B_{\mathbb{C}^n}(0,r)}\left| \frac{\partial^{|\alpha|+|\beta|}g_{i\bar j}(x)}{\partial z^\alpha \partial \bar z^\beta}\right|\leq A_k, \;\;\forall\;|\alpha|+|\beta|\leq k,$$
\end{enumerate}
where $g_{i\bar j}$ is the component of $\psi^*\omega$ on $U$ in terms of natural coordinates $(z^1, \ldots ,z^n)$, and $\alpha$, $\beta$ are the multiple indices with $|\alpha|=\alpha_1+\cdots+\alpha_n$.

The map $\psi$ is called a {\em quasi-coordinate map} and the pair $(U,\psi)$ is called a {\em quasi-coordinate chart} of $X$. 
\end{definition}
We have the following normal coordinates for a K\"ahler manifold of bounded geometry.
\begin{lemma}\label{lem:normal_chart}
Suppose $(X,\omega)$ is a K\"ahler manifold with bounded geometry of order at least $ 5$.  Then there exists   $c>0$, $A_k>0$, for $k=1,2, \ldots $  such that for  any point $p\in X$,  there exist $0<\varepsilon=\varepsilon(p)< r$,  and a holomorphic map  $\kappa:  B_{\mathbb{C}^n}(0,\varepsilon) \rightarrow X $ which is biholomorphic on its image  with $\kappa(0)=p$, $\kappa^* \omega=dd^c \varphi$ satisfying the conditions (2), (3) in  Definition~\ref{def:bd_geo}, and $\varphi_{a\overline{b}}(0)=\delta_{a b}$, 
$\varphi_{a\overline{b} c}(0)=\varphi_{a\overline{b}\overline{c}}(0)=0$, and  $\varphi_{a \overline{b}\overline{c} \overline{d}}(0)=\varphi_{a b c\overline {d}}(0)=0$. 
\end{lemma}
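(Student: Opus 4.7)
The plan is to start with the quasi-coordinate chart $\psi: U \to X$ provided by the bounded geometry assumption and modify it by three successive holomorphic changes of coordinates—linear, quadratic, and cubic in nature—each designed to kill the next order of derivatives of the local K\"ahler potential at the base point, in the spirit of Bochner's K-coordinates.

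First, pick a local potential $\tilde\varphi$ on $U$ with $\psi^*\omega=\ddc\tilde\varphi$; by assumption the components $g_{i\bar j}=\tilde\varphi_{i\bar j}$ have derivatives uniformly bounded up to order $\ell\geq 5$, and after subtracting an affine pluriharmonic part one may assume $\tilde\varphi$ itself has bounded derivatives up to order $\ell+2$ on $B_{\mathbb{C}^n}(0,r)$. Since $(g_{i\bar j}(0))$ is positive Hermitian with eigenvalues pinched between $c^{-1}$ and $c$ by condition (2) of Definition~\ref{def:bd_geo}, a $\mathbb{C}$-linear change of coordinates $w=Az$ with $A$ and $A^{-1}$ of bounded operator norm brings the metric to the form $\hat g_{i\bar j}(0)=\delta_{ij}$ while only rescaling the domain and worsening the constants $c,A_k$ by a universal factor.

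Next, apply a holomorphic change of coordinates $z^a=w^a+\tfrac12 c^a_{jk}w^jw^k$ with $c^a_{jk}$ symmetric in $j,k$. A direct chain-rule computation using $\hat g_{i\bar j}(w)=\tilde g_{a\bar b}(z(w))\,(\partial z^a/\partial w^i)\cdot\overline{\partial z^b/\partial w^j}$ gives at the origin $\partial_{w^k}\hat g_{i\bar j}(0)=\tilde g_{i\bar j,k}(0)+c^j_{ik}$, so setting $c^j_{ik}:=-\tilde\varphi_{i\bar j k}(0)$—automatically symmetric in $i,k$ by commutation of partial derivatives—simultaneously kills $\hat\varphi_{i\bar j k}(0)$ and, by taking conjugates, $\hat\varphi_{i\bar j\bar k}(0)$, while preserving the earlier normalization $\hat g_{i\bar j}(0)=\delta_{ij}$. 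A further cubic change $z^a=w^a+\tfrac16 d^a_{jk\ell}w^jw^kw^\ell$ with $d^a_{jk\ell}$ symmetric in its lower indices is chosen analogously: the expression for $\partial_{w^k}\partial_{w^\ell}\hat g_{i\bar j}(0)$ contains the term $d^j_{ik\ell}$ plus lower-order expressions involving the now-fixed $\tilde g_{a\bar b}$ and $c^a_{ij}$, and solving for $d^j_{ik\ell}$ (with the required symmetry in $i,k,\ell$, automatic from the Taylor symmetry of what is to be cancelled) achieves $\hat\varphi_{ijk\bar\ell}(0)=0$ and, by conjugation, $\hat\varphi_{i\bar j\bar k\bar\ell}(0)=0$ without disturbing the lower-order normalizations.

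Let $\kappa$ be the composition of $\psi$ with these polynomial coordinate changes. Since the coefficients $A$, $c^a_{jk}$, $d^a_{jk\ell}$ depend only on $(g_{i\bar j}(0))$ and its first two derivatives at $0$, they admit uniform bounds in terms of $c$ and $A_k$; the quantitative inverse function theorem then yields a universal radius $\varepsilon_0>0$ on which the polynomial map is biholomorphic onto its image, and the new metric $\kappa^*\omega=\ddc\varphi$ satisfies (2) and (3) on $B_{\mathbb{C}^n}(0,\varepsilon_0)$ with new constants depending only on the old ones. The point-dependence $\varepsilon=\varepsilon(p)<\varepsilon_0$ arises solely from needing $\psi$ itself to be injective on the image—a quasi-coordinate map is only required to be of maximal rank, not an embedding—which may require shrinking the ball. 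This injectivity issue is the main technical nuisance, but it is exactly the reason why the statement permits $\varepsilon$ to depend on $p$ while still insisting that $c$ and $A_k$ remain uniform; the algebraic heart of the lemma is the order-by-order normalization described in the previous two paragraphs, which is a purely formal manipulation of Taylor series.
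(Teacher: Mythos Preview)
Your proposal is correct and follows essentially the same route as the paper: start from the quasi-coordinate chart, apply a linear change to normalize $g_{i\bar j}(0)=\delta_{ij}$, then a polynomial change with quadratic and cubic terms whose coefficients are chosen to kill the $(2,1)$ and $(3,1)$ derivatives of the potential at the origin, noting that these coefficients are uniformly bounded by the bounded-geometry data while the $p$-dependence of $\varepsilon$ enters only through the local injectivity of the quasi-coordinate map. The only cosmetic difference is that the paper performs the quadratic and cubic corrections in a single combined substitution $z^i=w^i+A^i_{k\ell}w^kw^\ell+B^i_{mnp}w^mw^nw^p$ rather than in two successive steps.
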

\begin{proof} 
For any $p\in X$, we fix a local quasi-coordinate chart $(U,\psi)$ at $p$  as in Definition~\ref{def:bd_geo}.  By complex linear transformation we can assume that $\psi^*\omega =dd^c \phi$ on $U$ with $\phi_{i \bar j}(0)=\delta_{ij}$.  Since $\psi$ is of maximal rank everywhere, there exists $0<\varepsilon_1=\varepsilon_1(p)<r$ such that $\psi: B(0, \varepsilon_1)\rightarrow 
\psi(B(0, \varepsilon_1))\subset X$ is biholomorphic.  We can choose $\varepsilon_2<\varepsilon_1$ depending on $A^i_{k\ell}, B^i_{mnp} $  such that the holomorphic map  $\Tilde{\psi}: B(0,\varepsilon_2) \rightarrow B(0,\varepsilon_1) $,  $z^i= \Tilde\psi(w)^i= w^i+ A^i_{k\ell}w^kw^\ell + B^i_{mnp} w^m w^n w^p$   is biholomorphic on its image, where  the coefficients $A^i_{k\ell},B^i_{mnp} $  will be chosen hereafter. 

\medskip
Now we have  $\kappa=\psi\circ \tilde\psi:B(0,\varepsilon_2)\rightarrow X$ is biholomorphic on its image,  and the pull-back metric $ \kappa^*\omega =dd^c \varphi$  satisfies
\begin{align*}
   \varphi_{a\bar b} (w)&=\phi_{a\bar b}(\tilde \psi(w)) + 2A^i_{a k}\phi_{i\bar b}w^{k}  + 2\overline{A^j_{b\ell}\phi_{a\bar j}}\overline{w}^\ell + 3B^i_{amn}w^m w^n  \phi_{i\bar b} + 3 \overline{B^j_{bpq}}\phi_{a\bar j}\overline{w}^p\overline{w}^q \\
   &\quad + 4 A^{i}_{a k}\overline{A^j_{b\ell}}\phi_{i\bar j}w^k\overline{w}^\ell+O(|w|^3). 
\end{align*}
Therefore we have
$$
\varphi_{a\bar b c}(0) = \phi_{a\bar b c}(0)+ 2 A^{b}_{ac};  \varphi_{a\bar b \bar c}(0) = \phi_{a\bar b \bar c}+ 2 \overline{ A^{b}_{ac}}; \varphi_{a\bar b cd}(0) = \phi_{a\bar b c d}(0) +6 B^{b}_{acd}+C,
$$
where $C$ only depends on $A^{b}_{ac}$ and $\phi_{a\bar b c}(0)$.
By choosing $A^b_{ac}=-\frac{1}{2}\phi_{a\bar b c}(0)$ and then $B^b_{acd}=-\frac{1}{6}\phi_{a\bar b c d}(0) - C$,  we obtain the local normal coordinates for any $p\in X$. In particular, $A^{a}_{bc}$ and $B^a_{bcd}$ are uniformly bounded for any $p\in X$ by the  definition of bounded geometry. Since $\varepsilon_2<\varepsilon_1$ depends only on $A^{a}_{bc}$ and $B^a_{bcd}$,  we get the map $\kappa: B(0,\varepsilon_2)\rightarrow X$ as desired.
\end{proof}

\section{Uniform convergence of Tsuji’s iteration}\label{sect: main thm}
\subsection{Asymptotic expansion of  Bergman kernels}
The asymptotic expansion of Bergman kernels (Theorem \ref{thm: Bergman} below) in $C^k$-norms  for Kähler manifolds with {\it bounded geometry} was established in 
Ma–Marinescu \cite[Problem 6.1]{MaMarinescu-book} (cf. also  \cite[Theorem 6]{Ma_Marinescu_2015}.  In this section,  we provide an alternative proof for the asymptotic expansion of Bergman kernels in $\mathcal{C}^0$-norm, which will be used later. Our proof is inspired by Donaldson \cite{donaldson-2009-notes} and  Charles~\cite{charles15-bergmankernel} by using the Laplace approximation. We refer to \cite[Section 6]{MaMarinescu-book} for general results on the asymptotic expansion of the Bergman kernel on complete K\"ahler manifolds.

\begin{theorem}\label{thm: Bergman} Let $(X, \omega_\phi)$ be a complete K\"ahler manifold of dimension $n$, where  $\omega_\phi:=dd^c\phi>0$ is the curvature of a  Hermitian metric $h=e^{-\phi}$ on a line bundle  $L$. 
Suppose that $(X,\omega_\phi)$ has bounded geometry of order $\ell\geq 5$. Then
$$\left|B_{k\phi, \omega_\phi^{[n]}} - \left(\frac{k}{2\pi} \right)^n  \left( 1+ \frac{b_1}{2k}\right)\right|_{\mathcal{C}^0} \leq Ck^{n-2}$$
where $\omega^{[n]}:=\omega^n/n!$, $b_1$ is the scalar curvature of $\omega_\phi$,   $C>0$ depending on the bounded geometry of $(X, \omega_\phi)$, and $B_{k\phi, \omega_\phi^{[n]}}=\K_{k\phi, \omega_\phi^{[n]}}(x)e^{-k\phi(x)} $ denotes the Bergman function associated to $(k\phi,\omega_\phi^{[n]})$.
\end{theorem}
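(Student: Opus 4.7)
The plan is to reduce the expansion to a uniform local computation via the bounded geometry hypothesis, and then evaluate Gaussian integrals by the Laplace method. Fix an arbitrary point $p \in X$ and apply Lemma~\ref{lem:normal_chart} to obtain normal quasi-coordinates $\kappa\colon B_{\mathbb{C}^n}(0,\varepsilon) \to X$ with $\kappa(0) = p$ and $\kappa^*\omega_\phi = \ddc\varphi$, so that $\varphi_{i\bar{j}}(0) = \delta_{ij}$, $\varphi_{i\bar{j}k}(0) = \varphi_{i\bar{j}\bar{k}}(0) = 0$, and $\varphi_{i\bar j \bar k \bar \ell}(0) = \varphi_{ijk\bar\ell}(0) = 0$. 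Since $\varepsilon$ and the higher derivatives of $\varphi$ are controlled uniformly in $p$, all estimates below will be uniform. Trivializing $L^k$ over $\kappa(B(0,\varepsilon))$, the weight $e^{-k\phi}$ becomes $e^{-k\varphi(z)}$ with Taylor expansion
\[
\varphi(z) = |z|^2 + \tfrac{1}{4}\sum_{i,j,k,\ell} \varphi_{i\bar{j}k\bar{\ell}}(0)\, z^i \bar z^j z^k \bar z^\ell + O(|z|^5);
\]
after the rescaling $z = w/\sqrt{k}$ the integrand $e^{-k\varphi}$ becomes $e^{-|w|^2}$ times a quartic perturbation of size $1/k$ plus lower-order error.

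For the lower bound I would construct a peak section. Choose a smooth cutoff $\chi$ supported in $\{|z|\le\varepsilon/2\}$ and equal to $1$ on $\{|z|\le\varepsilon/3\}$, and consider the local expression $\chi\cdot\mathbf{1}$, where $\mathbf{1}$ is the constant section of $L^k$ in the trivialization. Compute
\[
I_k := \int_{\mathbb{C}^n} \chi(z)^2 \, e^{-k\varphi(z)} \, \frac{(\ddc\varphi)^n}{n!}
\]
by Laplace's method: after rescaling, the vanishing cubic terms annihilate the would-be $k^{-1/2}$ contribution, and standard Gaussian moment identities, combined with the normal-coordinate formula for $\mathrm{Scal}(\omega_\phi)(p)$ in terms of $\varphi_{i\bar i j \bar j}(0)$, extract a subleading correction proportional to $b_1(p)/k$ with the correct sign; the tail $|z|\ge\varepsilon/3$ contributes only $O(e^{-ck})$. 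The $\bar\partial$ of $\chi\cdot\mathbf{1}$ is supported on an annulus where $e^{-k\varphi}$ is already exponentially small, so by completeness of $(X,\omega_\phi)$ and positivity of the curvature, H\"ormander's $L^2$-estimate produces a global correction $u$ on $X$ with $\|u\|_{L^2(k\phi)} = O(e^{-ck})$; a submean-value inequality on a ball of radius $\sim k^{-1/2}$ (valid uniformly because of bounded geometry) then upgrades this to $|u(p)|^2 e^{-k\phi(p)} = O(e^{-ck})$. Setting $\sigma := \chi\cdot\mathbf{1} - u \in H^0(X,L^k)$, the ratio $|\sigma(p)|^2 e^{-k\phi(p)}/\|\sigma\|^2$ yields
\[
B_{k\phi,\omega_\phi^{[n]}}(p) \;\ge\; \left(\frac{k}{2\pi}\right)^n \left(1 + \frac{b_1(p)}{2k}\right) + O(k^{n-2}).
\]

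The matching upper bound comes from expanding an arbitrary normalized $s \in H^0(X,L^k)$ as a local Taylor series $s(z) = \sum_\alpha c_\alpha z^\alpha$ in the trivialization and comparing $|c_0|^2 = |s(p)|^2 e^{-k\phi(p)}$ with $\|s\|_{L^2}^2$. A submean-value estimate gives $|c_\alpha|^2 = O(k^{|\alpha|+n}\|s\|_{L^2}^2)$, so the contribution to $\|s\|^2$ of the ball of radius $R/\sqrt{k}$ (for $R$ large but fixed) dominates, and the same Laplace expansion applied to $|s|^2$ truncated at quadratic order in $z$ delivers $\|s\|_{L^2}^2 \ge (2\pi/k)^n (1 - b_1(p)/(2k))|c_0|^2 + O(k^{-n-2}|c_0|^2)$, matching the lower bound upon inversion. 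The principal obstacle is ensuring that the quartic perturbation of $\varphi$ at $p$ produces exactly the scalar-curvature correction with the stated sign and coefficient, and that H\"ormander's estimate together with the submean inequality uniformly controls the $\bar\partial$-correction on a non-compact manifold; both require the full strength of the bounded geometry of order $\ell \ge 5$, which supplies not only the uniform radius $\varepsilon$ of the normal charts of Lemma~\ref{lem:normal_chart} but also the uniform bounds on the curvature derivatives entering the Laplace expansion.
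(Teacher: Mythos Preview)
Your overall strategy—normal coordinates via Lemma~\ref{lem:normal_chart}, a peak section corrected by H\"ormander for the lower bound, and a local Laplace expansion—coincides with the paper's. Your lower bound is correct and in fact a mild variant: the paper uses Tian's auxiliary logarithmic weight $\chi=(n+2)\,\eta\log(\cdot)$ to force the H\"ormander solution $u$ to vanish \emph{exactly} at $p$, together with a $k$-dependent cutoff at scale $(\log k)/\sqrt{k}$; your fixed cutoff plus a submean-value bound on $|u(p)|$ works equally well and is arguably simpler.

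The upper bound, however, has a genuine gap. Restricting to a ball of radius $R/\sqrt{k}$ with $R$ \emph{fixed} cannot produce the required $O(k^{-2})$ relative accuracy: after rescaling $z=w/\sqrt{k}$ the domain becomes the fixed ball $|w|\le R$, and the Gaussian tail outside it is a constant of order $e^{-cR^2}$, not decaying in $k$. More fundamentally, you have not explained how to dispose of the cross terms $2\,\mathrm{Re}\bigl(\bar c_0\, c_\alpha\int z^{\alpha}e^{-k\varphi}\,\omega_\phi^{[n]}\bigr)$ for $|\alpha|\ge 1$; ``truncating $|s|^2$ at quadratic order'' does not make them vanish, and a priori they compete with the $b_1/(2k)$ term you wish to isolate. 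The paper's device (following Charles) is to work on a polydisc $\Delta_R$ of radius $R=k^{-2/3}$ and to replace $e^{-k\psi}\,\omega_\phi^{[n]}$ by $e^{-k\psi_0}\rho\,dV$, where $\psi_0$ and $\rho$ are the quartic, respectively quadratic, jets of the local weight and volume density. Because both $\psi_0$ and $\rho$ contain only monomials of matching holomorphic and antiholomorphic degree, one has $\int_{\Delta_R} z^\alpha e^{-k\psi_0}\rho\,dV=0$ for every $\alpha\ne 0$, hence $\int_{\Delta_R}|h|^2 e^{-k\psi_0}\rho\,dV\ge |h(0)|^2\int_{\Delta_R}e^{-k\psi_0}\rho\,dV$ for all holomorphic $h$. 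Comparing the truncated and true densities on $\Delta_R$ costs only $O(k^{-2})$ since $kR^5=k^{-7/3}$, and the Laplace expansion of the denominator finishes the job. This exact orthogonality on the polydisc is the ingredient missing from your sketch.
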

We will need the following Laplace approximation.
\begin{lemma} \cite[Theorem 7.7.5]{Hormander-book-Analysis1}  \label{lem:Laplace}
Let $K\subset \mathbb{R}^{m}$
be a compact set, $U$ an open neighborhood of $K$ and $k$ a positive integer. If $\lambda>0$,  $u\in C^{2k}(K)$ and $f\in C^{3k+1}(U)$, $ f\leq 0$ in $U$, $f'(x_0)=0$, $f'(x)\neq 0$ in $K\backslash\{x_0\}$, $\det H_f(x_0)\neq 0
$ then 
\begin{eqnarray*}
\left|\int u(x) e^{\lambda f(x)}\, {\rm d}x - \frac{\left(\frac{2\pi }{\lambda}\right)^{\frac{m}{2}}e^{\lambda f(x_0)}}{(\det H_f(x_0))^{\frac{1}{2}}}   \sum_{j<k} \lambda^{-j} L_j u\right|\leq  C\lambda^{-k}\sum_{|\alpha|\leq 2k}\sup |D^\alpha u|
\end{eqnarray*}
for a constant $C$ depending on $\|f\|_{C^{3k+1}(U)}$, 
where $H_f$ denotes the Hessian matrix of $f$,
$$L_j u = \sum_{s-r=j} \sum_{2s\geq 3r} 2^{-s}  \left\langle H_f(x_0)^{-1}D,D\right \rangle^s (g^r_{x_0} u)(x_0)/r!s!,$$
with
$$ g_{x_0}(x)=f(x)-f(x_0) - \frac{1}{2}  \left\langle  H_f(x_0)(x-x_0), x-x_0 \right \rangle.$$ 
In particular, $L_0 u= u(x_0)$ and 
\begin{eqnarray*}
L_1 u(x_0) &=&\frac{1}{2} \Bigg[
 u \left\{ - f_{ik\ell}f_{jrs} \left(\frac{1}{4}f^{ij}f^{k\ell}f^{rs}   
+ \frac{1}{6} f^{ij}f^{ks}f^{r\ell} \right) +\frac{1}{4} f^{ij}f^{kl}f_{ijk\ell} \right\}\\
&& \quad + f^{sq} f^{rp}f_{srq} u_p 
- {\rm Tr}(H_u H_f^{-1}) \Bigg]_{x=x_0}.
\end{eqnarray*}
\end{lemma}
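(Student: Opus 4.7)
The plan is the classical \emph{Laplace method}: localize the integral near the critical point $x_0$, rescale by $\sqrt\lambda$ so the second-order part $\tfrac12\langle H_f(x_0) z, z\rangle$ is displayed, Taylor-expand the remaining factors, and identify each $L_j u(x_0)$ as the coefficient of $\lambda^{-j}$ produced by integration against the limiting Gaussian. I proceed in three steps.

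\emph{Step 1 (Localization).} Since $f(x_0) = 0$, $f < 0$ on $K \setminus \{x_0\}$, and $H_f(x_0)$ is non-degenerate and hence negative definite (as $x_0$ is a maximum of $f$ on $K$), a compactness argument gives, for every neighborhood $V \Subset U$ of $x_0$, a constant $\delta = \delta(V) > 0$ with $f \leq -\delta$ on $K \setminus V$. Hence $\left|\int_{K \setminus V} u\, e^{\lambda f}\, dx\right| \leq \Vol(K)\, \sup_K |u| \cdot e^{-\lambda \delta}$, which for $\lambda \geq 1$ is absorbed into the target remainder $C\lambda^{-k}\sup|u|$; for $\lambda$ in any bounded range the stated estimate is trivial. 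It therefore suffices to analyze $\int_V u\, e^{\lambda f}\, dx$ for $V$ arbitrarily small.

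\emph{Step 2 (Rescaling and Gaussian integration).} Translate so $x_0 = 0$ and $f(0) = 0$, and decompose $f(y) = \tfrac12\langle H_f(0) y, y\rangle + g_0(y)$ with $g_0(y) = O(|y|^3)$. The substitution $y = z/\sqrt\lambda$ turns the integral into
\[
\lambda^{-m/2} \int_{\sqrt\lambda V} u(z/\sqrt\lambda)\, e^{\frac12\langle H_f(0) z, z\rangle}\, e^{\lambda g_0(z/\sqrt\lambda)}\, dz.
\]
By negative-definiteness of $H_f(0)$ the Gaussian tail $|z| \geq \varepsilon \sqrt\lambda$ contributes $O(e^{-c\lambda})$, so we may replace $\sqrt\lambda V$ by $\mathbb{R}^m$ at exponentially small cost. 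Expand $e^{\lambda g_0(z/\sqrt\lambda)} = \sum_r (\lambda g_0(z/\sqrt\lambda))^r/r!$ and $u(z/\sqrt\lambda) = \sum_s (z \cdot \nabla)^s u(0)/(s!\,\lambda^{s/2})$, after replacing $g_0$ by its Taylor polynomial of degree $3k$. A summand with indices $(r, s)$ carries the weight $\lambda^{(r-s)/2}$ and its $z$-monomial has minimum degree $3r + s$; integration against $e^{\frac12\langle H_f(0) z, z\rangle}$ over $\mathbb{R}^m$ produces the normalization $(2\pi)^{m/2}/|\det H_f(0)|^{1/2}$ times a Wick/Isserlis contraction by $H_f(0)^{-1}$, with odd-degree monomials vanishing.

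\emph{Step 3 (Matching, error control, and the $L_1$ formula).} Only pairs with $j := s - r \geq 0$ and $3r + s$ even contribute; the inequality $2s \geq 3r$ is precisely the condition that the minimum-degree monomial of $g_0^r (z \cdot \nabla)^s u(0)$ generates a nonzero Gaussian moment. Summing these terms recovers
\[
L_j u(x_0) = \sum_{s - r = j,\, 2s \geq 3r} 2^{-s}\, \langle H_f(0)^{-1} D, D\rangle^s \bigl(g_0^r u\bigr)(0)/(r!\, s!),
\]
as stated. Standard Taylor-remainder estimates on $g_0$ (to degree $3k$) and on $u$ (to order $2k$) combine with Gaussian tail bounds to control the total remainder by $C\lambda^{-k}\sum_{|\alpha| \leq 2k}\sup|D^\alpha u|$, where $C$ depends only on $\|f\|_{C^{3k+1}(U)}$ and on $H_f(x_0)^{-1}$. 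The explicit formula for $L_1 u(x_0)$ then follows by enumerating the pairs with $s - r = 1$ and $2s \geq 3r$, namely $(r,s) \in \{(0,1),(1,2),(2,3)\}$: $(0,1)$ produces the $\operatorname{Tr}(H_u H_f^{-1})$ term; $(1,2)$ yields the $f_{ijk\ell}\, u$ and $f_{srq}\, u_p$ contractions; $(2,3)$ generates the two families of products of two third derivatives of $f$ multiplied by $u$. The main obstacle is Step 3: one must simultaneously track a double power series in $(r, s)$, enforce the parity and inequality constraints that govern which Wick contractions survive, and uniformly bound all implicit constants by $\|f\|_{C^{3k+1}(U)}$.
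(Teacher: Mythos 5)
The paper does not prove this lemma: it is quoted (with the substitution $f\mapsto -if$ turning Hörmander's oscillatory integral with $\operatorname{Im}f\ge 0$ into a real Laplace integral with $f\le 0$) from Hörmander's Theorem~7.7.5, so there is no internal proof to compare yours against. Your route — localize, rescale by $\sqrt{\lambda}$, expand, and integrate against the limiting Gaussian via Wick contractions — is the classical direct Laplace method and is genuinely different from Hörmander's argument, which runs through the exact Fourier-transform identity for a nondegenerate quadratic exponent and an expansion of the resulting operator $e^{\langle H_f(x_0)^{-1}D,D\rangle/(2\lambda)}$ applied to $u\,e^{\lambda g_{x_0}}$; that device is what delivers the error in the precise form $C\lambda^{-k}\sum_{|\alpha|\le 2k}\sup|D^{\alpha}u|$ with the stated dependence of $C$, and it is exactly what your Step~3 must reconstruct by hand.

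Two points need repair before the sketch is a proof. First, the termwise expansion of $e^{\lambda g_0(z/\sqrt{\lambda})}$ over all of $\mathbb{R}^m$ is not legitimate as written: on the region $\lambda^{\epsilon}\le|z|\le\varepsilon\sqrt{\lambda}$ the exponent $\lambda g_0(z/\sqrt{\lambda})$ is of size up to $\varepsilon^{3}\lambda$, so the series in $r$ is not uniformly controlled there. One must first discard $|z|\ge\lambda^{\epsilon}$ (cost $O(e^{-c\lambda^{2\epsilon}})$ by Gaussian decay and the negative-definiteness of $H_f(x_0)$) and only then Taylor-expand with remainder on the inner ball; you label this ``the main obstacle'' but it is precisely where the claimed uniformity of $C$ is won or lost, and where the bound on the $(r,s)$-term by $\lambda^{-(s-r)}$ times derivatives of $u$ of order at most $2k$ has to be extracted. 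Second, the sign bookkeeping in Step~3 is not innocent: since $H_f(x_0)$ is negative definite the Gaussian covariance is $-H_f(x_0)^{-1}$, so the $(r,s)=(0,1)$ term contributes $-\tfrac12\operatorname{Tr}(H_uH_f^{-1})$, consistent with the displayed $L_1$ but opposite to a literal reading of the general formula $2^{-s}\langle H_f(x_0)^{-1}D,D\rangle^{s}(\cdot)$ (these signs, and the branch of $(\det H_f(x_0))^{1/2}$, are absorbed from the factors $i^{-j}$ in Hörmander's complex-phase original); since the explicit $L_1$ is what the paper uses to extract the scalar-curvature coefficient in Theorem~\ref{thm: Bergman}, asserting that the enumeration $(r,s)\in\{(0,1),(1,2),(2,3)\}$ ``produces'' the terms without tracking these signs is not enough. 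Finally, your normalizations should be justified rather than assumed: if $f(x_0)<0$ then $f<0$ on all of $K$ and both sides are exponentially small, while if $f(x_0)=0$ any other zero of $f$ in $U$ would be an interior maximum and hence a critical point, contradicting $f'\neq 0$ on $K\setminus\{x_0\}$; this is what makes $x_0$ the strict maximum and $H_f(x_0)$ negative definite.
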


\begin{proof}[Proof of Theorem \ref{thm: Bergman}]
We will establish lower and upper bounds for the Bergman kernel by adapting the method of peak sections for compact K\"ahler manifolds, as developed in \cite{Tian90-metrics-algebraic} (see also \cite{Ruan_1998,Lu00-expansion, SongWeinkove10-negativecurvature}), and by using the Laplace method as  in \cite{donaldson-2009-notes,charles15-bergmankernel}.
\medskip

For any $x_0\in X$, we take  $(U, z=(z^1,\ldots,z^n))$ a normal coordinate chart as in Lemma~\ref{lem:normal_chart} centered at $x_0$. 
Then $L$ has a local holomorphic frame $g$ defined on $U$ on which the function $\psi=\phi -\log |g|^2 = -\log|g|^2_{\phi}$  has minimum at $x_0$  and 
\begin{equation}\label{eq_lc}
    \psi(z)=  \sum_{j=1}^n |z^j|^2+\sum_{1\leq i,j,m,\ell\leq n}{\frac{1}{2!2!}}\phi_{i\bar j m\bar \ell} z^i \bar z^{ j} z^{m}\bar z^{ \ell}+O(|z|^5).
\end{equation}
Here for example, we  can choose $a_i, b_{ji}, c_{\ell m i}, d_{pqrs}$ such that $\log |g|^2 (z)= \phi(0)+ a_i z^i+ b_{ji} z^jz^i +c_{\ell m i}z^mz^\ell z^i + d_{pqrs}z^pz^qz^rz^s$ and use Taylor's expansion  to have \eqref{eq_lc}. 

\medskip
Let $\eta: [0,\infty)\to [0,1]$ be a cut-off function satisfying $\eta(t) = 1$ for $t\leq 1/2$, $\eta(t)= 0$ for $t\geq 1$, $ |\eta'(t)|\leq 4$ and $ |\eta''(t)|\leq 8$. Define a $L^k$-valued $(1,0)$ form on $X$:
$$\alpha=\bar \partial \left[\eta\left(\frac{k|z|^2}{(\log k)^2}\right)\right] g^{k},$$
which vanishes outside $A_k:=\{z| (\log k)^2/(2k)\leq |z|^2\leq (\log k)^2/k\}\subset U$. Let $dV$ denote the Lebesgue measure on the local chart $(U,z)$. We define a weight function $\chi$, supported in $U$, by setting \[\chi=(n+2)\eta\left(\frac{k|z|^2}{2(\log k)^2}\right)\log\left(\frac{k|z|^2}{2(\log k)^2}\right) \]
for $z\in U$, and $\chi=0$ outside $U$.
We observe that $\chi\in \ [-2(n+2), 0]$ on $A_k$  and can be approximated by a decreasing sequence of smooth functions $\{\chi_m\}_{m\in\mathbb{N}}$; cf.~\cite[pages 104--105]{Tian90-metrics-algebraic}.
It follows from the bounded geometry property that for $k>0$ sufficiently large, 
$$\textrm{Ric}(\omega_\phi)+k\omega_\phi+\ddc\chi_m \geq \frac{k}{C} \omega_{\phi},$$
for a uniform constant $C>0$. 
On the other hand, there exists a constant $C>0$ depending on $n$ and the bounded geometry of $(X,\omega_\phi)$,
\begin{equation}\label{eq: alpha-norm}
    e^{-\chi}|\alpha|^2_{\omega_\phi} \leq  C\left|\eta'\left(\frac{k|z|^2}{(\log k)^2}\right)\right|^2 \frac{k^2 }{(\log k)^4} \phi^{i\bar j}z^i\bar z^j |g|^{2k}\leq C \frac{k |g|^{2k}}{(\log k)^2} ,
\end{equation}   on $A_k$, otherwise $\alpha=0$. Since on $A_k$, we have 
$e^{-k\psi} = (1- \sum_{j=1}^n  |z ^j|^2)^k + O(|z|^{3})$ as follows from the choice of coordinates~\eqref{eq_lc}, hence locally
$$e^{-k\psi}\omega_\phi^n\leq \left(1-\frac{1 }{2}|z|^2\right )^k n!{\rm d}V_{},$$
where ${\rm d}V=\left(\frac{i}{2} \right)^n dz^1\wedge d\Bar{z}^1\wedge\cdots\wedge dz^n\wedge d\Bar{z}^n$.
It follows from~\eqref{eq: alpha-norm} that
\begin{align}    \label{eq_est_alpha}\int_{A_k}|\alpha|^2_{\omega_\phi}e^{-\chi-k\phi}\omega_\phi^{[n]}&\leq C\frac{k}{(\log k)^2}\int_{A_k}|g|^{2k}e^{-k\phi}\omega_\phi^{[n]}\\ \nonumber
    &=C\frac{k}{(\log k)^2}\int_{A_k}e^{-k\psi}\omega_\phi^{[n]}\\ \nonumber
    &\leq C\frac{k}{(\log k)^2}\left(\frac{(\log k)^2}{k} \right)^n \left( 1-\frac{(\log k)^2}{k}\right)^k\\ \nonumber
    &\leq C(\log k)^{2n-2}k^{1-n-(\log k)/2}<\infty \nonumber
\end{align} 
because $(1-(\log k)^2/k)^k=e^{k\log(1-(\log k)^2/k)}\leq e^{-(\log k)^2/2}=k^{-(\log k)/2}$, where $C>0$ only depends on $n$ and the bounded geometry of $(X,\omega_\phi)$.

We apply  H\"omander's $L^2$ estimate (see~\cite[Chapter 4]{Hormander-book-complex} or~\cite[Proposition 1.1]{Tian90-metrics-algebraic}) with the weight $\chi$: there exists a smooth section 
$u$ of $L^k$ on $X$ such that $\bar \partial u= \alpha$  and 
\begin{eqnarray*}
 \int_X|u|^2e^{-k\phi-\chi}\omega_\phi^{[n]}
&\leq &\frac{C}{k}\int_X|\alpha|^2_{\omega_\phi} e^{-k\phi-\chi}\omega_\phi^{[n]}\\
&= &\frac{C}{k}\int_{A_k}|\alpha|^2_{\omega_\phi} e^{-k\phi-\chi}\omega_\phi^{[n]}\\
&\leq &C (\log k)^{2n-2}k^{-\log k/2-n },
\end{eqnarray*}
where the last inequality follows from \eqref{eq_est_alpha}.
Because of the non-integrability of $e^{-\chi}$ at $x_0$, one must have $u(x_0)=0$.

\smallskip
Since  $\chi\leq 0$ we have
$$\int_X | u |^2e^{-k \phi}\omega_\phi^{[n]} \leq \int_X|u|^2e^{-k\phi-\chi}\omega_\phi^{[n]} \leq C (\log k)^{2n-2}k^{-\log k/2-n }.$$
It follows that  $f:=\eta g^k -u$ is a holomorphic section of $L^k$,  
 satisfying $|f(x_0)|^2=e^{k\phi(0)}$ since $u(x_0)=0$  and 
\begin{eqnarray*}
\| f\|^2_{k\phi}=\int_X |f|^2 e^{-k\phi}\omega_\phi^{[n]}= \int_{U_k}|g|^{2k} e^{-k\phi}\omega_\phi^{[n]}+ O(k^{-n-2}),
\end{eqnarray*}
where  $U_k=\{z: |z|^2\leq (\log k)^2/k\}$.  On $U_k$ we have $$\omega_\phi^{[n]} = (1+ \phi_{i  \bar j\ell\bar\ell} z^i\bar z^j +o(|z|^3)){\rm d}V.$$
Applying the Laplace approximation (Lemma \ref{lem:Laplace}) with $u= 1+ \phi_{i  \bar j\ell\bar\ell} z^i\bar z^j +o(|z|^3)$ and $f=-\psi$ where $\psi = -\log|g|^2_\phi$,  we have
\begin{eqnarray*}
\int_{U_k} |g|^{2k} e^{-k\phi}\omega_\phi^{[n]} &=&  \int_{U_k}  e^{-k\psi}\omega_\phi^{[n]}  \\
&=& \int_{U_k} e^{-k \psi}  (1+ \phi_{i  \bar j\ell\bar\ell} z^i\bar z^j +o(|z|^3))\,{\rm d}V \\
&=&  \left(\frac{2\pi}{k}\right)^n \left( 1+ \frac{1}{2k} \phi_{i\bar i j\bar j }  + O(k^{-2}) \right)\\
&=&\left(\frac{2\pi}{k}\right)^n \left( 1- \frac{1}{2k} S_{\omega_\phi}(0)  + O(k^{-2}) \right),
\end{eqnarray*}
where $O\left({k^{-2}}\right) $ dominated by $C/k^{-2}$  on $U$ with a uniform constant $C$ depending only on the bound of $|D^\alpha\phi|$ on $U$ with $|\alpha|\leq 7$.  Therefore
\begin{equation*}
\K_{k\phi,\omega_\phi^{[n]}}(x_0)\geq \frac{ |f(x_0)|^2}{\|f\|^2_{k\phi}}\geq  \left(\frac{k}{2\pi}\right)^n e^{k\phi(0)}\left( 1+ \frac{1}{2k} S_{\omega_\phi}(0)  + O(k^{-2}) \right),
\end{equation*}
hence we get a lower bound for $\K_{k\phi,\omega_\phi^{[n]}}e^{-k\phi}$.

\medskip
We look for an upper bound for $\K_{k\phi,\omega_\phi^{[n]}}e^{-k\phi}$ following the strategy in~\cite{charles15-bergmankernel}.  It suffices to work on the coordinate chart $(U,z)$ as above. 
Let $f$ be a local holomorphic section of $L^k$ on $U$. 
We claim that
\begin{equation*}
|f(0)/g^k(0)|^2\leq \frac{\int_{\Delta_R} |f/g^{k}|^2e^{-k\psi_0 }  \rho \,{\rm d}V}{\int_{\Delta_R}e^{-k\psi_0}\rho\, {\rm d}V},
\end{equation*}
where $\Delta_R\subset U$ is a polydisc with radius $R$ and
$$\psi_0= |z^j|^2+\phi_{i\bar j m\bar \ell} z^i \bar z^{j} z^{m}\bar z^{ \ell},\; \rho=1+ \phi_{ i \bar j \ell\bar \ell}z^i\bar z^j.$$ 
{Indeed, using polar coordinates, one knows that $\int_{\Delta_R}z^m\Bar z^\ell dV=0$ if $m\neq \ell$. Hence, for any multi-index $\alpha\in\mathbb{N}^n$ we have that \[\int_{\Delta_R}e^{-k\psi_0}z^\alpha  \rho \,{\rm d}V=0.\] It follows that for any holomorphic function $h: U\to\mathbb{C}$,
\[ \int_{\Delta_R}|h|^2 e^{-k\psi_0}\rho \,{\rm d}V=\int_{\Delta_R}\big(|h(0)|^2+|h-h(0)|^2 \big) e^{-k\psi_0}\rho \,{\rm d}V
. \] We thus apply with the holomorphic function $f/g^k$ (since $g$ has no zeros in the polydisc $\Delta_R$ of radius $R$ for $R>0$ sufficiently small) to obtain our claim.}

Choosing $R=k^{-2/3}$,
we have $$-k\psi_0\leq -k\psi + Ck^{-7/3} \;
\text{and}\; 
\rho dV\leq (1+Ck^{-2} )\omega_\phi^{[n]} $$
hence
\begin{align*}
\int_{\Delta_R} |f/g^k|^2e^{-k\psi_0 } \rho dV&\leq  (1+ Ck^{-2}) \int _{\Delta_R}  |f/g^k|^2e^{-k\psi}\omega_\phi^{[n]}\\
&=(1+ Ck^{-2}) \int _{\Delta_R}  |f|^2e^{-k\phi} \omega_\phi^{[n]}.
\end{align*}
Using the Laplace approximation again, we have 
\begin{equation*}
\int_{\Delta_R}e^{-k\psi_0}\rho dV= \left(\frac{2\pi}{k}\right)^n\left( 1- \frac{1}{2k} S_{\omega_\phi}(0)  + O(k^{-2}) \right).
\end{equation*}
Therefore we get
\begin{align*}
|f(0)/g^k(0)|^2&\leq \frac{\int_{\Delta_R} |f/g^k|^2e^{-k\psi_0 }\rho dV}{\int_{\Delta_R}e^{-k\psi_0}\rho dV} \\ \nonumber
&=\left(\frac{k}{2\pi}\right)^n \left( 1+ \frac{1}{2k} S_{\omega_\phi}(0)  + O( k^{-2}) \right)\int _{\Delta_R}  |f|^2e^{-k\phi} \omega_\phi^{[n]}.
\end{align*}
Since $|g^k(0)|^2=e^{k\phi(0)}$, we infer that
$$ |f(0)|^2\leq e^{k\phi(0)}\left(\frac{k}{2\pi}\right)^n \left( 1+ \frac{1}{2k} S_{\omega_\phi}(0)  + O( k^{-2}) \right)\int _{\Delta_R}  |f|^2e^{-k\phi} \omega_\phi^{[n]},$$
hence we get a upper bound for $\K_{k\phi,\omega_\phi^{[n]}}e^{-k\phi}$.
\end{proof}

\subsection{ Convergence of Tsuji’s iteration}
 Recall that $L\to X$ is a positively curved line bundle on $X$ equipped with a smooth Hermitian metric $\phi_L$. Let $\Omega$ be a smooth volume form on $X$. For any Hermitian metric $\phi$ on $L$, we denote by $\mu_\phi=\frac{1}{n!}e^{\phi-\phi_L}\Omega$, which is a well-defined positive measure on $X$. We prove the following lemma,  which is a variant version of \cite[Lemma 3]{berndtsson09-remarks-tsuji'}.
\begin{lemma}
\label{lem_key}
Let $\phi_\infty$ be the solution of
$$ (dd^c\phi_{\infty})^n=e^{\phi_{\infty}-\phi_L}\Omega,\quad\omega_{\phi_{\infty}}=\ddc\phi_\infty>0$$ such that $(X,\omega_{\phi_\infty})$ has bounded geometry of order $\ell\geq 5$.
Suppose that  $C_1$, $C_2$ are two real numbers  satisfying $C_1\leq \phi -\phi_{\infty}\leq C_2 $. 
Then   we  have
\begin{equation}
\label{est_1}
\beta_k(\phi)-\phi_{\infty}\geq  \frac{k-1}{k}C_1 - \varepsilon_k
\end{equation}
and
\begin{equation}
\label{est_2}
\beta_k(\phi)-\phi_{\infty}\leq   \frac{k-1}{k}C_2+ \varepsilon_k,
\end{equation}
where  $\varepsilon_k= c/k^2$ with $c>0$ only depending on the bounded geometry of $\phi_{\infty}$. Moreover, if $\omega_{\phi_\infty}$ is a K\"ahler--Einstein metric with negative curvature, and  $d_k$ is defined in \eqref{eq:d_k_KE}, then we can obtain $\varepsilon_k=c/k^3$.
\end{lemma}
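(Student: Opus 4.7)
The strategy is to follow Berndtsson's compact-case argument in \cite{berndtsson09-remarks-tsuji'}, with the uniform Bergman asymptotic (Theorem~\ref{thm: Bergman}) supplying what used to come from Tian--Yau--Zelditch. The key point is that the hypothesis $(dd^c\phi_\infty)^n = e^{\phi_\infty-\phi_L}\Omega$ lets us replace the measure $\mu_\phi=\frac{1}{n!}e^{\phi-\phi_L}\Omega$ by a tractable multiple of $\omega_{\phi_\infty}^{[n]}$. Indeed, since $\frac{1}{n!}e^{-\phi_L}\Omega = e^{-\phi_\infty}\omega_{\phi_\infty}^{[n]}$, for any $s\in H^0(X,L^k)$ one gets
\begin{equation*}
\|s\|^2_{k\phi,\mu_\phi} \;=\; \int_X |s|^2 e^{-(k-1)\phi-\phi_\infty}\,\omega_{\phi_\infty}^{[n]}.
\end{equation*}

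First I would use the two-sided bound $C_1\le\phi-\phi_\infty\le C_2$ in the exponent of this integral. This gives
\begin{equation*}
e^{-(k-1)C_2}\,\|s\|^2_{k\phi_\infty,\omega_{\phi_\infty}^{[n]}} \;\le\; \|s\|^2_{k\phi,\mu_\phi} \;\le\; e^{-(k-1)C_1}\,\|s\|^2_{k\phi_\infty,\omega_{\phi_\infty}^{[n]}},
\end{equation*}
and taking suprema over unit-norm sections yields the comparison
\begin{equation*}
e^{(k-1)C_1}\,\K_{k\phi_\infty,\omega_{\phi_\infty}^{[n]}}(x) \;\le\; \K_{k\phi,\mu_\phi}(x) \;\le\; e^{(k-1)C_2}\,\K_{k\phi_\infty,\omega_{\phi_\infty}^{[n]}}(x).
\end{equation*}
Taking $\tfrac{1}{k}\log$ and subtracting $\phi_\infty(x)+\tfrac{1}{k}\log d_k$ converts this exactly into
\begin{equation*}
\tfrac{k-1}{k}C_1 + \delta_k(x) \;\le\; \beta_k(\phi)(x)-\phi_\infty(x) \;\le\; \tfrac{k-1}{k}C_2 + \delta_k(x), \qquad \delta_k(x):=\tfrac{1}{k}\log\!\Big(\tfrac{B_{k\phi_\infty,\omega_{\phi_\infty}^{[n]}}(x)}{d_k}\Big).
\end{equation*}

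The only remaining task is to show $\delta_k = O(k^{-2})$ in general and $O(k^{-3})$ in the K\"ahler--Einstein case; this is a direct consequence of Theorem~\ref{thm: Bergman}. That theorem gives $B_{k\phi_\infty,\omega_{\phi_\infty}^{[n]}}/d_k = 1+\tfrac{b_1}{2k}+O(k^{-2})$ with $b_1$ the scalar curvature of $\omega_{\phi_\infty}$, which is uniformly bounded under the bounded-geometry hypothesis. The elementary estimate $|\log(1+t)|\le 2|t|$ for $|t|\le\tfrac12$ then gives $|\delta_k(x)|\le c/k^2$, proving the general case. In the K\"ahler--Einstein setting $\mathrm{Scal}(\omega_{\phi_\infty})$ is the constant $-n$, so $1+\tfrac{b_1}{2k}=1-\tfrac{n}{2k}$, which for large $k$ is precisely the correction factor built into $d_k$ in \eqref{eq:d_k_KE}. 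The $k^{-1}$ term in $B_{k\phi_\infty}/d_k$ therefore cancels and one finds $B_{k\phi_\infty}/d_k=1+O(k^{-2})$, hence $\delta_k(x)=\tfrac{1}{k}\log(1+O(k^{-2}))=O(k^{-3})$.

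The argument has no serious obstacle once Theorem~\ref{thm: Bergman} is in hand; the main care needed is the bookkeeping in Step~1 (rewriting $\mu_\phi$ against $\omega_{\phi_\infty}^{[n]}$ uses the Monge--Amp\`ere equation satisfied by $\phi_\infty$ in an essential way), and the uniformity of the error term across $X$, which is exactly what the bounded-geometry hypothesis secures in Theorem~\ref{thm: Bergman}.
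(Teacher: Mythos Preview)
Your proposal is correct and follows essentially the same route as the paper: you rewrite $\mu_\phi$ via the Monge--Amp\`ere equation for $\phi_\infty$ to compare $\|s\|^2_{k\phi,\mu_\phi}$ with $\|s\|^2_{k\phi_\infty,\omega_{\phi_\infty}^{[n]}}$, deduce the two-sided Bergman kernel inequality $e^{(k-1)C_1}\K_{k\phi_\infty}\le\K_{k\phi,\mu_\phi}\le e^{(k-1)C_2}\K_{k\phi_\infty}$, and then invoke Theorem~\ref{thm: Bergman} to identify $\tfrac{1}{k}\log(B_{k\phi_\infty}/d_k)$ as $O(k^{-2})$ (resp.\ $O(k^{-3})$ in the K\"ahler--Einstein case after the scalar-curvature term is absorbed into $d_k$). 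The paper organizes the same computation by first recording $\beta_k(\phi_\infty)=\phi_\infty+\varepsilon_k$ and then proving the kernel comparison, but the content is identical.
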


\begin{proof}
Since $ (dd^c\phi_{\infty})^n=e^{\phi_{\infty} -\phi_L}\Omega=n! \mu_{\phi_\infty}$, using the first order asymptotic of  Bergman kernels for the weight $\phi_{\infty}$ (cf.~Theorem~\ref{thm: Bergman}), we have
\begin{eqnarray*}
\K_{k\phi_{\infty}, \mu_{\phi_\infty}}e^{-k \phi_{\infty}}  &=&\K_{k\phi_{\infty},\omega_{\phi_{\infty}}^{[n]}}e^{-k \phi_{\infty}} \\
&=&\left(\frac{k  }{2\pi}\right)^n \left(1+O\left(\frac{1}{k}\right)\right).
\end{eqnarray*}  
By definition
$d_k= \left(\frac{k  }{2\pi}\right)^n$ we have
\begin{equation}\label{est_kernel}
\beta_k(\phi_{\infty})= \frac{1}{k}\log(\K_{k\phi_{\infty
}}/d_k)= \phi_{\infty} + \varepsilon_k,
\end{equation} 
where  $\varepsilon_k =O(\frac{1}{k^2})$. 

Similarly, if $\omega_\infty$ is a K\"ahler--Einstein metric with ${\rm Ric} (\omega_{\phi_\infty})=-\omega_{\phi_\infty}$, using the second-order asymptotic behavior of  Bergman kernels for the weight $\phi_{\infty}$ (cf.~Theorem~\ref{thm: Bergman}) we get
\begin{eqnarray*}
\K_{k\phi_{\infty}, \mu_{\phi_\infty}}e^{-k \phi_{\infty}}  &=&\K_{k\phi_{\infty},\omega_{\phi_{\infty}}^{[n]}}e^{-k \phi_{\infty}} \\
&=&\left(\frac{k  }{2\pi}\right)^n \left(1-\frac{n}{2k} + O\left(\frac{1}{k^2}\right)\right).
\end{eqnarray*}  
Therefore, for $d_k= \left(\frac{k  }{2\pi}\right)^n\max\left\{\frac{1}{2}, ( 1-\frac{n}{2k})\right\}$ and $k>n$, we have
\begin{equation}\label{est_kernel_KE}
\beta_k(\phi_{\infty})= \frac{1}{k}\log(\K_{k\phi_{\infty
}}/d_k)=\phi_{\infty}+ \varepsilon_k,
\end{equation} 
where $\varepsilon_k= O(\frac{1}{k^3}).$

\medskip
Next, it follows from the fact that $\phi-\phi_{\infty}\geq C_1$ for some constant $C_1$, the extremal characterization of Bergman kernel implies that 
\begin{equation}\label{eq: est_ker}
\K_{k\phi }\geq e^{(k-1)C_1}\K_{k\phi_{\infty}}.
\end{equation}
Indeed, if we pick a section $0\neq  \sigma\in H^0{(X,L^k)}$ with $\int_X|\sigma|^2e^{-k\phi_\infty}\mu_{\phi_\infty}<+\infty$ then  for any $x\in X$,
\begin{equation*}
    \frac{|\sigma(x)|^2}{\int_X |\sigma|^2e^{-k\phi}\,{\rm d}\mu_{\phi}}\geq   e^{(k-1)C_1}\frac{|\sigma(x)|^2}{\int_X |\sigma|^2e^{-k\phi_\infty}\,{\rm d}\mu_{\phi_\infty}}.
\end{equation*}
Taking the supremum, we obtain the inequality~\eqref{eq: est_ker}. 
Therefore, combining with \eqref{est_kernel}  (reps. \eqref{est_kernel_KE}) we have
$$ \beta_k(\phi)-\phi_{\infty}\geq \frac{k-1}{k}C_1 - \varepsilon_k  $$
for some positive $\varepsilon_k=O(\frac{1}{k^2})$ (reps. $\varepsilon_k=O(\frac{1}{k^3})$).
Similarly, we have
$$ \beta_k(\phi)- \phi_{\infty}\leq \frac{k-1}{k}C_2 + \varepsilon_k  $$
as required.
\end{proof}

\begin{proof}[Proof of Theorems~\ref{thm: main} \& \ref{thm: construction-KE}]
We define by recurrence  $\phi_{k+1}:=\beta_k(\phi_k)$. 
For any $k\geq 1$, denote by $C_k$ the optimal constant such that
\begin{equation}
\phi_k-\phi_{\infty}\geq C_k.
\end{equation}
Lemma \ref{lem_key} thus implies that 
$$C_{k+1}\geq \frac{k-1}{k}  C_k-\varepsilon_k .$$
Since $\varepsilon_k$ is of order $1/k^2$, it follows that $k C_{k+1}$ is uniformly bounded from below by a quantity of order $O(\log k)$.
  Similarly, Lemma \ref{lem_key} yields a uniform upper bound for $k \widetilde C_{k+1}$, where $\widetilde C_k$ is the optimal constant such that 
$$\phi_k-\phi_{\infty}\leq  \widetilde C_k.$$
Since $C_k\leq \widetilde C_k$ by definition, we have $C_k=O\left(\frac{\log k}{k}\right)$ and $\widetilde C_k=O\left(\frac{\log k}{k}\right)$. 
Hence we obtain the desired convergence rate $\frac{\log k}{k}$.

By the same argument, we obtain the uniform convergence with rate $\frac{1}{k}$ when $\omega_{\phi_\infty}$ is a K\"ahler--Einstein metric.
\end{proof}



\section{Variation of K\"ahler--Einstein metrics}
\label{sect: pos-variation}
\subsection{Positivity of relative Bergman kernels}

{
\begin{setup}\label{setup: family}
Let $p:X \to Y$  be a holomorphic surjective map from a K\"ahler manifold $X$ of dimension $n+m$ to a complex manifold $Y$
of dimension $m$. Let $Y^\circ\subset Y$ denote the set of points that are not critical values of $p$ in $Y$ and set $X^\circ=p^{-1}(Y^\circ)$.  
\end{setup}

If $p:X\to Y$ is in Setup~\ref{setup: family}, then  the relative canonical bundle associated with $p $ is defined by $K_{X/Y}:= K_X-p^*(K_Y)$.  
It follows from \cite{Berndtsson09-curvature-fibrations, Berndtsson_Paun_08} that we have an isomorphism between $K_{X_y}$ and $(K_X-p^*K_{Y})^{}|_{X_y}= K_{X/Y}|_{X_y}$ {for $y\in Y^\circ$}.
Let $t=(t_1, \ldots,t_m)$ be a local coordinates near a point in $Y$. Then these local coordinates define a local $(m, 0)$-form $dt= dt_1\wedge\ldots \wedge dt_m$, which trivializes $K_Y$. This trivialization induces a local section $\tilde u$ of $K_X$ over $X_y$ by $\tilde u = u \wedge p^*(dt) = u \wedge d\tilde t$,where $\tilde t_j = t_j \circ p$, $j=1,\ldots,m$, are functions on $X$.
Conversely, any local section $\tilde u$ of $K_X$ can be written as $\tilde u = u \wedge d\tilde t$ locally. Then, the restriction of $u$ to the fibers $ X_y$ is uniquely defined as a local section of $K_{X_y}$. Thus the restriction of $u$ to the fiber $X_y$ is well defined as a local section of $K_{X_y}$, which we call the trace of $\tilde u$ and denote by $\tau_y(\tilde u)$.
Then the isomorphism is given by
\begin{equation}
\begin{split}
    K_{X_y}+ p^*(K_{Y})|_{X_y}& \rightarrow K_{X}|_{X_y} \\
    u\otimes p^* {\rm d}t &\mapsto u \wedge {\rm d}\tilde t  
\end{split}    
\end{equation}
with inverse map $\tilde u\mapsto \tau_y(u)\otimes p^*( dt)$. Hence the induced isomorphism  $K_{X_y}\to K_{X/Y}|_{X_y}$ is given by $u\mapsto u\wedge p^*(dt)/dt$.

Let $\omega$ be a relative K\"ahler metric on $X$ such that $\omega_y:=\omega|_{X_y}$ is a K\"ahler metric on $X_y$ for $y\in Y^\circ$. 
 Assume that $L$ is a line bundle on $X$ and $h_L=e^{-\phi_L}$ is a singular Hermitian metric on $L$ with positive curvature current. 
{For each $y\in Y^\circ$, we define  $E_y:=H^0(X_y,(K_{X_y}+ L|_{X_y})\otimes \mathcal{I}(\phi_L|_{X_y}))$}. Here, $\mathcal{I}(\phi)$ denotes the {\em multiplier ideal sheaf}  of holomorphic functions on X
that are square integrable against $e^{-\phi}$.
 Then the (singular) Hermitian metric $\phi_L$ on $L$ induces a Hilbert norm $\|\cdot\|_{y}$ on $E_y$ by
\begin{equation}
\|u\|^2_y:=\int_{X_y}c_nu\wedge \bar u e^{-\phi_L}=\int_{X_y}|u|_\omega^2e^{-\phi_L}\,{\rm d}V_{X_y,\omega}. 
\end{equation}
Let $x\in X^\circ$, and $y=p(x)$. Let $z_1, \ldots, z_{n+m}$ be local coordinates for a neighborhood $V\subset X^\circ$ of $x$ and $t_1, \ldots, t_m$ local coordinates near $y$. For $u\in H^0(X_y, K_{X_y}+L|_{X_y})$, we define $u'$  by $$u'{\rm d}z=u\wedge p^* {\rm d}t.$$
  The fiberwise Bergman kernel $\K(x)$ of $(K_{X/Y}+ L)|_{X^\circ}$ with respect to this norm is defined by 
\begin{equation}\label{eq_bergman_twist}
      \K(x)= \sup_{u\in E_y, \|u\|^2_y\leq 1  } |u'(x)|^2.
  \end{equation}
  We remark that this concept of the Bergman kernel coincides with the classical one.
Thus, $\phi_{X/Y}(x):= \log \K(x)$ provides a Hermitian metric $h=e^{-\phi_{X/Y}}$ on $K_{X/Y}$. 

We establish a pseudo‑effectivity criterion for the relative adjoint line bundle $K_{X/Y}+L$ by proving the log‑plurisubharmonicity of the fiberwise Bergman kernel. The only assumptions needed to obtain a nontrivial positively curved metric are the positivity of $L$ together with the existence of an $L^2$ section of $K_{X/Y}+L$. The following theorem is a non‑compact extension of \cite[Theorem~0.1]{Berndtsson_Paun_08} (see also \cite[Theorem 4.1.1]{Paun_Takayama_2018}, \cite[Theorem 1.3]{Zhou_Zhu_L_2_jdg}, \cite[Theorem 3.5]{cao17-OTtheorem} and \cite[Theorem 1.4]{Deng-Wang-Zhang-Zhou24-psh}).
\begin{theorem}\label{thm_variation}
   Let $X$ be a weakly pseudoconvex K\"ahler manifold of dimension $n+m$ and $Y$ a complex manifold of dimension $m$.
Let $p: X\to Y$ be a surjective holomorphic map.
    Let $L$ be a line bundle on $X$ equipped with a singular Hermitian metric $h_L = e^{-\phi_L}$ satisfying:
    \begin{itemize}
        \item  $dd^c \phi_L\geq 0$ in the sense of currents;
        \item  $H^0(X_{y_0}, (K_{X_{y_0}} + L|_{X_{y_0}})\otimes \mathcal{I}(\phi_L|_{X_{y_0}})) \neq 0$ for some $y_0 \in Y^\circ$. 
    \end{itemize} 
    Denote by $\K_{\phi_L}$ the relative Bergman kernel of $(K_{X/Y}+L)|_{X^\circ}$. Then $\K_{\phi_L}$ has positive curvature current and extends across $X\setminus X^\circ$ to a metric with positive curvature current on all of $X$.
\end{theorem}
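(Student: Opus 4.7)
The plan is to prove plurisubharmonicity of $\log\K_{\phi_L}$ as a local weight for $(K_{X/Y}+L)|_{X^\circ}$ and then to extend this weight across the critical locus $X\setminus X^\circ$. The essential tool at both stages is the Ohsawa--Takegoshi $L^2$-extension theorem, which is available on the weakly pseudoconvex manifold $X$.

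For the first step, I would follow the dualization argument of Berndtsson--Paun. Form the $L^2$ direct image sheaf $E:=p_*(K_{X/Y}+L\otimes\mathcal{I}(\phi_L))$ over $Y^\circ$, with Hilbert fibers $(E_y,\|\cdot\|_y)$. By the non-compact Berndtsson positivity-of-direct-images theorem (as developed by Paun--Takayama and refined by Cao and by Deng--Wang--Zhang--Zhou under weak pseudoconvexity), $E$ carries a Nakano-seminegatively curved dual, equivalently a Nakano-semipositively curved metric. The fiberwise evaluation map $\mathrm{ev}\colon p^*E\to K_{X/Y}+L$ over $X^\circ$ is a surjective morphism of Hermitian vector bundles on the open subset where $\K_{\phi_L}>0$, and its quotient metric is by construction the Bergman kernel metric. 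Dualizing produces a holomorphic embedding $(K_{X/Y}+L)^{-1}\hookrightarrow p^*E^*$ into a Griffiths-seminegative bundle, and since a holomorphic sub-bundle of a Griffiths-seminegative bundle inherits seminegative curvature, the induced metric on $(K_{X/Y}+L)^{-1}$ is seminegatively curved. Dualizing once more shows that $\log\K_{\phi_L}$ is plurisubharmonic on the locus $\{\K_{\phi_L}>0\}$; extension by $-\infty$ across the analytic set $\{\K_{\phi_L}=0\}\cap X^\circ$ yields plurisubharmonicity on all of $X^\circ$. Surjectivity of $\mathrm{ev}$ near $y_0$ is ensured by hypothesis~(ii) combined with Ohsawa--Takegoshi, guaranteeing that $\{\K_{\phi_L}>0\}$ is non-empty.

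For the extension across $X\setminus X^\circ$, it suffices to produce a local upper bound for $\log\K_{\phi_L}$ near each point $x_0\in X\setminus X^\circ$ and then apply the classical extension theorem for psh functions across pluripolar sets. The upper bound is obtained by comparison with the ordinary Bergman kernel of a small Stein neighborhood $V\Subset X$ of $x_0$. For $x\in V\cap X^\circ$ and $u\in E_{p(x)}$ with $\|u\|_{p(x)}\le 1$, Ohsawa--Takegoshi extends $u\wedge p^*dt$ to a section $U$ of $K_X+L$ on $V$ with $\|U\|_{L^2(V)}\le C$ for a constant $C$ depending only on the OT geometry of $V$. Cauchy-type estimates on the Stein manifold $V$ then yield
\begin{equation*}
|u'(x)|^2=|U'(x)|^2\le C^2\,\mathcal{B}_V(x),
\end{equation*}
where $\mathcal{B}_V$ is the ordinary Bergman kernel of $V$. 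Taking the supremum over $u$ gives $\K_{\phi_L}(x)\le C^2\mathcal{B}_V(x)$, and smoothness of $\mathcal{B}_V$ on $V$ supplies the desired local upper bound.

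The main obstacle, in my view, is the careful application of the non-compact Berndtsson--Paun positivity theorem in the presence of the singular weight $\phi_L$ and the multiplier ideal $\mathcal{I}(\phi_L)$: each of the relevant references treats a version of this statement, but the hypotheses (weak pseudoconvexity, $L^2$-integrability, singular positivity of $\phi_L$) must be carefully matched to the present setup. A secondary delicate point is ensuring that the Ohsawa--Takegoshi comparison with $\mathcal{B}_V$ is uniform as $x\to X\setminus X^\circ$, so that the extension across the critical locus is genuine rather than merely formal.
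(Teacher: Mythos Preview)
Your proposal is correct, but the first step takes a genuinely different route from the paper. The paper proves plurisubharmonicity of $\log\K_{\phi_L}$ directly by verifying the sub-mean value inequality along complex lines in the base: given a section $u_0$ realizing $\K_{t_0}(z)$, the Ohsawa--Takegoshi extension (Lemma~\ref{lem: OT-theorem}) produces an extension $U$ over a disc $\Delta_r$ with $L^2$ norm $\le 1$, and then Jensen's inequality together with the extremal characterization of the fiberwise Bergman kernel gives
\[
\log\K_{t_0}(z)\le \frac{1}{\pi r^2}\int_{\Delta_r}\log\K_t(z)\, i\,\mathrm{d}t\wedge\mathrm{d}\bar t .
\]
Upper semi-continuity is handled separately via Demailly's global regularization. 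This is the classical Berndtsson--P\u aun argument, and it needs only the OT lemma as input. Your approach instead invokes the full positivity-of-direct-images theorem (in its non-compact, singular-metric versions) and then dualizes through the evaluation map. That is also valid and conceptually clean, but it imports heavier machinery whose proofs in the references you cite are themselves close to the direct argument the paper gives; so what you gain in structural elegance you lose in self-containedness, and you have to be careful that the direct-image sheaf is locally free where you use it.

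For the extension across $X\setminus X^\circ$ the two arguments coincide in spirit: OT extension followed by a sub-mean/pointwise bound. One small imprecision in your write-up: the OT extension of $u\wedge p^*\mathrm{d}t$ lives on $p^{-1}(p(V))$ (or on all of $X$), not on $V$ itself, since the fiber $X_{p(x)}$ need not sit inside the small neighborhood $V$; the pointwise bound on $V$ then follows from the sub-mean inequality applied to the extended section there. This is exactly how the paper organizes it.
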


A consequence of Theorem \ref{thm_variation} is the positivity of the relative Bergman kernel.
\begin{corollary} Let $X$ be weakly pseudoconvex K\"ahler manifold and $Y$ a complex manifold. 
Let $p: X\to Y$ be a surjective holomorphic map.
 Assume that $H^0(X_{y_0}, K_{X_{y_0}})\neq 0$ for some $y_0\in Y^\circ$. Then the relative pluricanonical line bundle $K^k_{X/Y}$ admits a metric with positive curvature current. Moreover, this metric coincides with the fiberwise $k$‑Bergman kernel metric on the fibers of $p$.
\end{corollary}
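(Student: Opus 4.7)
The plan is to apply Theorem~\ref{thm_variation} iteratively, feeding the metric produced at stage $k-1$ back in as the weight used to build the metric at stage $k$. The base case $k=1$ is obtained by choosing $L=\mathcal{O}_X$ with the trivial metric $\phi_L=0$: the hypothesis $dd^c\phi_L\geq 0$ is trivial, and condition (ii) of Theorem~\ref{thm_variation} is precisely the standing assumption $H^0(X_{y_0},K_{X_{y_0}})\neq 0$. Theorem~\ref{thm_variation} then produces a positively curved singular Hermitian metric $h_1=e^{-\phi_1}$ on $K_{X/Y}$, extending across $X\setminus X^\circ$, whose restriction to each smooth fiber $X_y$ is the ordinary Bergman kernel metric of $K_{X_y}$.

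For the inductive step, suppose we have constructed a positively curved Hermitian metric $h_{k-1}=e^{-\phi_{k-1}}$ on $(k-1)K_{X/Y}$ whose fiberwise restriction is the $(k-1)$-st Bergman kernel metric. The plan is to apply Theorem~\ref{thm_variation} with $L:=(k-1)K_{X/Y}$ and $\phi_L:=\phi_{k-1}$. The semipositivity condition holds by the inductive hypothesis, so the only thing to verify is condition (ii): the existence of a nonzero section of $(K_{X_{y_0}}+L|_{X_{y_0}})\otimes\mathcal{I}(\phi_L|_{X_{y_0}})\simeq kK_{X_{y_0}}$ in the weighted $L^2$ space. For this, pick a nonzero $\sigma\in H^0(X_{y_0},K_{X_{y_0}})$ with $\int_{X_{y_0}}|\sigma|^2<\infty$ (furnished by the base case). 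Then $\sigma^{\otimes k}\in H^0(X_{y_0},kK_{X_{y_0}})$, and the extremal characterization of the fiberwise Bergman kernel gives the pointwise bound
\[
|\sigma^{\otimes(k-1)}|^2\,e^{-\phi_{k-1}}\;\leq\;\|\sigma^{\otimes(k-1)}\|_{X_{y_0}}^2.
\]
Writing $|\sigma^{\otimes k}|^2=|\sigma|^2\cdot|\sigma^{\otimes(k-1)}|^2$ and integrating against $e^{-\phi_{k-1}}$ then yields a finite number, so condition (ii) is verified.

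Theorem~\ref{thm_variation} accordingly outputs a singular Hermitian metric $h_k$ with semipositive curvature current on $K_{X/Y}+L=kK_{X/Y}$, which extends across $X\setminus X^\circ$, and which is by construction the fiberwise $k$-Bergman kernel metric associated to the iterative scheme. This closes the induction and establishes both the positivity of $K^{k}_{X/Y}$ and the identification with the fiberwise $k$-Bergman kernel.

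The step I expect to be the most delicate is the weighted $L^2$ verification at each stage: it crucially uses the pointwise extremal bound for the previously built Bergman kernel together with the finiteness of the ordinary $L^2$ norm on $K_{X_{y_0}}$ secured at $k=1$. Once this is in place, positivity and the canonical extension across $X\setminus X^\circ$ are inherited directly from Theorem~\ref{thm_variation}, so no further analytic work is needed.
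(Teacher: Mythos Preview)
Your proof is correct and follows the same inductive application of Theorem~\ref{thm_variation} as the paper, starting from $L=\mathcal{O}_X$ and feeding $L=(k-1)K_{X/Y}$ with the previously constructed Bergman metric at each stage. You in fact supply more than the paper does: the paper's proof simply asserts the recursion works, whereas you verify condition~(ii) at each step via the pointwise extremal bound $|\sigma^{\otimes(k-1)}|^2e^{-\phi_{k-1}}\le \|\sigma^{\otimes(k-1)}\|^2_{\phi_{k-2}}$ together with the inductively known finiteness of that norm, which is a genuine detail the paper leaves implicit. One small clarification would strengthen your write-up: make explicit that the norm $\|\sigma^{\otimes(k-1)}\|_{X_{y_0}}^2$ on the right is the $L^2$ norm with weight $\phi_{k-2}$, and that its finiteness is precisely what was established at the previous inductive step (so that, unwinding, $\|\sigma^{\otimes k}\|^2_{\phi_{k-1}}\le \|\sigma\|^{2k}$).
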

\begin{proof}
   First, take $L=\mathcal{O}_X$ with the constant metric. Then Theorem \ref{thm_variation} shows that $\phi^{(1)}_{X/Y}:=\log \K$ defines a Hermitian metric on $K_{X/Y}$ with positive curvature current. Applying Theorem \ref{thm_variation} inductively with $L=K^k_{X/Y}$ endowed with the Hermitian metric $\phi_{X/Y}^{(k)}$ for $k=1,2,\ldots$, we obtain a new metric $\phi_{X/Y}^{(k+1)}$ on $K_{X/Y}^{k+1}$ with positive curvature current, given by the logarithm of the corresponding Bergman kernel. 
\end{proof}


The proof of Theorem~\ref{thm_variation} follows the same strategy as in the compact case, cf. \cite{Berndtsson_Paun_08, Paun_Takayama_2018,Zhou_Zhu_L_2_jdg}. 
We provide a brief sketch for the reader’s convenience.
We will use the following version of the Ohsawa--Takegoshi extension theorem, which can be deduced from \cite[Theorem~1.1]{Zhou_Zhu_L_2_jdg} (cf. also \cite{cao17-OTtheorem, BL16-OTproof, GuanZhou-l2extension-pb,Xu-Zhou24-optimal-L2}).
\begin{lemma}\label{lem: OT-theorem} Let $(X,\omega)$ be a weakly pseudoconvex K\"ahler manifold. 
    Let $p:X\rightarrow \Delta_r=\{t: |t|<r\} \subset \mathbb{C}$ be a surjective holomorphic map. Suppose that $X_0=p^{-1}(0)$ is a submanifold of codimension $1$. Let $L$ be a holomorphic line bundle on $X$ equipped with a (possibly singular) Hermitian metric $\phi_L$ satisfying $\ddc\phi_L \ge 0$.
   Let $u \in H^0(X_0, K_{X_0} + L|_{X_0})$ with $\int_{X_0} |u|^2_\omega e^{-\phi_L}\, dV_{X_0} < \infty$. Then there exists $U_r \in H^0(p^{-1}(\Delta_r), K_{X/Y}+L)$ such that $U_r|_{X_0} = u \wedge p^*dt$ and
    $$\frac{1}{\pi r^2}\int_{p^{-1}({\Delta_r)}} |U_r|^2_\omega e^{-\phi_L}{\rm d}V_{X,\omega}\leq \int_{X_0} |u|^2_\omega e^{-\phi_L}\,{\rm d}V_{X_0,\omega}.$$
\end{lemma}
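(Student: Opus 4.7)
The plan is to deduce the lemma directly from the optimal $L^2$ extension theorem of Zhou--Zhu \cite{Zhou_Zhu_L_2_jdg} applied to the sublevel sets of the psh function $\log |p|^2$. That result already provides extensions with the sharp $\frac{1}{\pi r^2}$ constant across a smooth hypersurface cut out by a holomorphic function on a weakly pseudoconvex K\"ahler manifold, with an auxiliary psh weight; the present lemma is exactly this configuration, so the proof should be essentially a translation and a check of normalizations.

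First, I reinterpret the extension datum. Writing $t$ for the coordinate on $\Delta_r$, the composition $\tilde p := t\circ p$ is a holomorphic function on $X$ with $X_0=\{\tilde p=0\}$; the assumption that $X_0$ is a codimension-one submanifold gives $d\tilde p \neq 0$ along $X_0$, so $p$ is a submersion in a neighbourhood of $X_0$. Via the adjunction identification $(K_X+L)|_{X_0}\cong K_{X_0}+L|_{X_0}$ (contraction with $d\tilde p$), the section $u\wedge p^*dt$ is a legitimate section of $(K_X+L)|_{X_0}$. Since $K_X = K_{X/Y}+p^*K_Y$ globally, constructing an extension $U_r\in H^0(p^{-1}(\Delta_r),K_{X/Y}+L)$ with $U_r|_{X_0}=u\wedge p^*dt$ is equivalent to producing a holomorphic extension $\tilde U\in H^0(p^{-1}(\Delta_r),K_X+L)$ of $u\wedge p^*dt$ along $X_0$. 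So it is enough to build $\tilde U$.

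Second, I apply \cite{Zhou_Zhu_L_2_jdg} to the open set $\Omega := p^{-1}(\Delta_r)$, realized as the sublevel set $\{\log|\tilde p|^2 < \log r^2\}$, with line bundle $L$ carrying the semipositive singular metric $e^{-\phi_L}$ and with hypersurface $X_0=\{\tilde p=0\}$. Weak pseudoconvexity of $X$ is handled in the standard way: exhaust $X$ by a nested sequence $\{X_j\}$ of relatively compact weakly pseudoconvex domains, apply the extension theorem on each $X_j\cap\Omega$ to obtain $\tilde U^{(j)}$ with a uniform $L^2$ bound, and extract a weak $L^2$ limit; the limit remains holomorphic, inherits the estimate, and still restricts to $u\wedge p^*dt$ on $X_0$ by continuity of the restriction map in $L^2$. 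The optimal constant in Zhou--Zhu, computed from the Green function of $\Delta_r$ at the origin, is exactly $\pi r^2$, yielding the stated mean-value inequality after dividing by $\pi r^2$.

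The main obstacle is purely bookkeeping: matching the pointwise norm $|\cdot|_\omega^2$, the fibre volume $dV_{X_0,\omega}$ induced by $\omega$, and the residue identification $K_{X/Y}|_{X_0}\cong K_{X_0}$ so that the optimal constant appears with no stray numerical factors. This is a local computation in a tubular neighbourhood of $X_0$ using $\tilde p$ as a transverse coordinate together with the relation $c_n\,\tilde U\wedge\overline{\tilde U}=|\tilde U|_\omega^2\,dV_{X,\omega}$, and it is exactly the normalization already verified in the compact OT literature \cite{cao17-OTtheorem,BL16-OTproof,GuanZhou-l2extension-pb}, so no new calculation is needed beyond reference.
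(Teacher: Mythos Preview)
Your proposal is correct and matches the paper's approach: the paper does not give an independent proof of this lemma but simply notes that it can be deduced from \cite[Theorem 1.1]{Zhou_Zhu_L_2_jdg} (with references to \cite{cao17-OTtheorem, BL16-OTproof, GuanZhou-l2extension-pb}), which is exactly what you do. Your additional unpacking of the adjunction identification and the normalization bookkeeping is more detailed than anything in the paper itself.
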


}

 \begin{proof}[Proof of Theorem \ref{thm_variation}]
 We follow the same argument as in the compact case (see \cite{Berndtsson_Paun_08, Paun_Takayama_2018,Zhou_Zhu_L_2_jdg}).
 Recall that for any $y\in Y^\circ$, setting $\phi_y=\phi_L|_{X_y}$,
 we define the $L^2$ norm on $u_y\in H^0(X_y,K_{X_y}+ L|_{X_y})$ by 
 \[\|u_y\|^2:=\int_{X_y} |u_y|^2_\omega e^{-\phi_y} \,{\rm d}V_{X_y,\omega}. \]
Let $(t,z)=(t_1,\ldots,t_m,z_1,\ldots,z_n)$ be local coordinates near a point $x\in X$. Set $y=p(x)$. 
 The Bergman kernel of $K_{X_y}+L|_{X_y}$ on $X_y$ ($y=y(t)$) is defined by
     \begin{equation}\label{eq: Bergman-Kernel}
       \K_{t}(z)=\sup \left\{|u_y'(z)|^2: u_y\in H^0(X_y,K_{X_y}+ L|_{X_y}),\|u_y\| \leq 1 \right\} 
     \end{equation}
     where $u_y=u'_y(z){\rm d}z\otimes e$ with $e$ a local frame of $L$ near $x$.

     Remark that  $p:X^\circ \to Y^\circ$ is a holomorphic submersion. 
     To show that $\log\K_t(z)$ is psh with respect to $(t,z)$, one needs to prove the subharmonicity of $\log\K_t(z)$ along any complex line. Without loss of generality, we can assume that $Y^\circ=\Delta_R\subset\mathbb{C}$ is a disk ($m=1$).  

\smallskip
The first property is the upper semicontinuity of $\log \K_t(z)$ with respect to $(t,z)$.
Indeed, thanks to Demailly's global regularization \cite[Section 9]{demailly82-estimations}, we can find an increasing sequence of smooth metrics $(h_\varepsilon = e^{-\phi_\varepsilon})_{\varepsilon>0}$ on $L$, converging to $h_L = e^{-\phi_L}$ in $L^1$ and satisfying
$\ddc\phi_\varepsilon\geq -\lambda_\varepsilon \omega$, $\ddc \phi_\varepsilon \ge -\lambda_\varepsilon \omega$, where $(\lambda_\varepsilon)$ is an increasing family of continuous functions on $X$ with $\lambda_\varepsilon \searrow 0$ almost everywhere. The corresponding Bergman kernels $\K_{t,\varepsilon}$ are continuous. Arguing as in \cite[Theorem~1.1]{Paun_Takayama_2018}, we obtain that $\log \K_{t,\varepsilon}(z)$ decreases to $\log \K_t(z)$ for $(t,z)\in U$ as $\varepsilon \searrow 0$. This proves the desired upper semicontinuity.

\medskip 
We next show that $\log \K$ satisfies the sub-mean inequality on every holomorphic disc in $Y^\circ$, namely, for any $t_0\in Y^\circ$,
\[ \log \K_{t_0}(z)\leq \frac{1}{\pi r^2} \int_{\Delta_r}\log \K_{t}(z) \,i {\rm d}t\wedge {\rm d}\Bar{t}\] for some $r<<1$ and $\Delta_r=\Delta(t_0,r)\subset Y^\circ$. Fix $u_0\in H^0(X_{t_0},K_{X_{t_0}}+ L|_{X_{t_0}})$ such that $\|u_0\|=1$ and $\K_0(z)=|u_0'(z)|^2$. By the Ohsawa--Takegoshi type theorem (Lemma~\ref{lem: OT-theorem}), $u_0$ can be extended to a holomorphic section $U$ on the preimage of $\Delta_r$ such that 
\[ \frac{1}{\pi r^2}\int_{\Delta_r}\int_{X_t} |U'(t,z)|^2e^{-\phi_L} \,{\rm d}V_{X_t,\omega}\, i{\rm d}t\wedge {\rm d}\Bar{t}\leq \|u_0\|^2= 1, \]
where we have used the Fubini theorem and $U=U'dz\wedge p^*({\rm d}t)\otimes e$.
Since $U(t,z)$ is holomorphic in $t$, we can write 
\[ \log \K_{t_0}(z)=\log |U'(0,z)|^2\leq \frac{1}{\pi r^2}\int_{\Delta_r}\log |U'(t,z)|^2 i {\rm d}t\wedge {\rm d}\Bar{t}.\]
Using the extremal property of the Bergman kernel, the term on the right-hand side is dominated by
\[ \frac{1}{\pi r^2}\int_{\Delta_r}\left(\log \K_t(z)+\log \int_{X_t} |U'(t,z)|^2e^{-\phi_L} {\rm d}V_{X_t,\omega} \right)\, i {\rm d}t\wedge {\rm d}\Bar{t}.\]
By Jensen's inequality, we obtain 
\begin{align*}
    \frac{1}{\pi r^2}\int_{\Delta_r} \log \int_{X_t}& |U'(t,z)|^2e^{-\phi_L} {\rm d}V_{X_t,\omega}\, i{\rm d}t\wedge {\rm d}\Bar{t}\\
    &\leq\log \left( \frac{1}{\pi r^2}\int_{\Delta_r}\int_{X_t} |U'(t,z)|^2e^{-\phi_L} {\rm d}V_{X_t,\omega}\, i{\rm d}t\wedge d\Bar{t}\right) \leq 0.
\end{align*}
This tells us that $\log\K_t(z)$ is psh in the horizontal direction. On the other hand, by the convexity of  $|u_t'(z)|$ and the definition of $\log \K$, we deduce that $\log\K(z)$ is also psh in the fiberwise direction. Therefore, $\log\K$ is psh on $p^{-1}(Y^\circ)$, and in particular, the curvature of the induced singular Hermitian metric is positive on $p^{-1}(Y^\circ)$ in the sense of currents.   

It remains to show that the curvature current of the induced Hermitian metric above can be extended over the whole of $X$. It suffices to show that its local potential is bounded from above near the singular fibers. 

Fixing an open subset $V\subset X$, we define
the function $\K$ on $V\cap X^\circ $ as~\eqref{eq: Bergman-Kernel}.   
The problem is equivalent to proving that $\K$ is uniformly bounded from above on $V\cap X^\circ$. For any $x\in V\cap X^\circ$, there exists a section $\sigma\in H^0(X_{p(x)}, K_{X_{p(x)}}+L|_{X_{p(x)}})$ such that
\[ \K(x)=|\sigma'(z)|^2,\quad\int_{X_{p(x)}}|\sigma|^2 e^{-\phi_{p(x)}}\, {\rm d}V_{X_{p(x)},\omega}=1,\] 
where $(t,z)$ local coordinates near $x$ and $\sigma=\sigma' dz\otimes e$.
Using the Ohsawa--Takegoshi extension theorem, we can find an extension $\Tilde{\sigma}$ of $\sigma$ on $p^{-1}(p(V))$ such that
\[ \int_{p^{-1}(p(V))}|\Tilde{\sigma}|^2 e^{-\phi_L}\, {\rm d}V_{X,\omega}\leq C_V \]
with $C_V>0$ only depending on $V$. It follows from the sub-mean value inequality that $|\sigma'    (z)|^2$ is dominated by a constant only depending on $C_V$. 
This completes the proof.
 \end{proof}
 

\subsection{Plurisubharmonic variation of K\"ahler--Einstein metrics} 
This section is devoted to studying the plurisubharmonic variation of K\"ahler--Einstein metrics on non‑compact complex manifolds. The idea goes back to Tsuji~\cite{Tsuji10-construction-KE} and Schumacher~\cite{Schumacher12-positivity-bundles}, who studied holomorphic families of canonically polarized, compact Kähler manifolds using two different methods. Schumacher’s method is based on a maximum principle, which moreover yields strict plurisubharmonicity when the family is not infinitesimally trivial. Tsuji’s approach shows that K\"ahler--Einstein metrics can be obtained via an iteration scheme involving suitably normalized pluricanonical sections, as explained in this paper.In \cite{Tsuji13-construction-KE-pseudoconvex}, Tsuji extended his method to families of K\"ahler--Einstein metrics on bounded pseudoconvex domains, using Berndtsson’s results~\cite{Berndtsson09-curvature-fibrations, Berndtsson06-subharmonicity-bergman}. For alternative proofs of this result on strongly pseudoconvex domains based on Schumacher’s method, see \cite{Choi15-variations-pseudoconvex, Choi15-study}. In this work, we follow Tsuji’s method to obtain the following.

\begin{theorem}\label{thm: variation-psh} Let $p:X\to Y$ be in Setup~\ref{setup: family}.  Assume further that $X$ is weakly pseudoconvex, and 
\begin{itemize}
    \item[(i)] for every $y\in Y^\circ$, $X_y$ admits a K\"ahler--Einstein metric $\omega_{\mathrm{KE},y}$ with negative Ricci curvature such that $(X_y,\omega_{\mathrm{KE},y})$ has bounded geometry of order $\ell\ge 5$, locally uniformly in $y$, i.e., for any neighborhood $U\Subset Y^\circ$ the constants $r,c,A_k$ in Definition~\ref{def:bd_geo} depend only on $U$. 
    
    \item[(ii)]  $K_{X/Y}$ admits a singular Hermitian metric $h_{0}=e^{-\phi_{0}}$ with positive curvature current and for every $y_0\in Y^\circ$, there exists a neighborhood $U\Subset Y^\circ$ of $y_0$ such that $\|\phi_{0,y}-\phi_{\mathrm{KE},y}\|_{L^\infty(X_y)}\le C(U)$ for all $y\in U$. 
\end{itemize}
Then the Hermitian metric on $(K_{X/Y})|_{X^\circ}$ induced by the fiberwise K\"ahler--Einstein metrics is positively curved.  Moreover, if we have the additional condition that
\begin{itemize}
\item[(iii)] the K\"ahler--Einstein potentials $(\phi_{\mathrm{KE},y})_{y\in Y^\circ}$ are bounded from above near the singular fibers,
\end{itemize}
then this metric extends canonically across $X\setminus X^\circ$ to a positively curved metric.
\end{theorem}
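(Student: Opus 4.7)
The strategy is to realize the fiberwise K\"ahler--Einstein metric $\phi_{\mathrm{KE}}$ on $K_{X/Y}|_{X^\circ}$ as a locally uniform limit of psh weights on the total space, following Berndtsson's argument in the compact setting \cite{berndtsson09-remarks-tsuji'}. Starting from the initial metric $\phi_0$ of hypothesis (ii), I would define inductively $\phi_{k+1,y} := \beta_k(\phi_{k,y})$ on each fiber $X_y$ for $y\in Y^\circ$, using Tsuji's iteration \eqref{eq_beta} in the K\"ahler--Einstein setting, with $d_k$ as in \eqref{eq:d_k_KE}. The proof then reduces to two points: (a) for every $k$, the collection $\{\phi_{k,y}\}_{y\in Y^\circ}$ glues to a semi-positively curved singular Hermitian metric on $K_{X/Y}|_{X^\circ}$, and (b) the convergence $\phi_{k,y}\to \phi_{\mathrm{KE},y}$ supplied by Theorem~\ref{thm: construction-KE} is locally uniform in $y\in Y^\circ$.

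For point (a), I argue by induction on $k$; the base case $k=0$ is exactly hypothesis (ii). For the induction step, suppose $\phi_k$ is a psh weight on $K_{X/Y}$ over $X^\circ$. Then the fiberwise Bergman kernel $\mathcal{K}_{k\phi_k,\mu_{\phi_k}}$ defining $\beta_k(\phi_{k,y})$ can be identified, after the cancellation $e^{-(k+1)\phi}\cdot e^\phi = e^{-k\phi}$ pointed out in the Remark after Theorem~\ref{thm: main}, with the relative Bergman kernel of $K_{X/Y}^{(k+1)} = K_{X/Y} + kK_{X/Y}$ associated with the standard $L^2$-pairing on $H^0(X_y, K_{X_y}^{k+1})$. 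Theorem~\ref{thm_variation}, applied with the line bundle $L = kK_{X/Y}$ and psh weight $\phi_L = k\phi_k$, then yields that $\log\mathcal{K}_{k\phi_k,\mu_{\phi_k}}$ is semi-positively curved on $X^\circ$ as a weight on $K_{X/Y}^{(k+1)}$. Subtracting the constant $\log d_k$ and rescaling as in \eqref{eq_beta} gives that $\phi_{k+1}$ is a semi-positively curved weight on $K_{X/Y}$, closing the induction. Verifying this identification of the weighted iteration with a relative Bergman kernel to which Theorem~\ref{thm_variation} applies is the technical heart of the argument and the main obstacle.

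For point (b), fix $y_0\in Y^\circ$ and a relatively compact open neighborhood $U\Subset Y^\circ$ of $y_0$. Hypothesis (i) provides bounded-geometry constants for $(X_y,\omega_{\mathrm{KE},y})$ that are uniform for $y\in U$, and hypothesis (ii) gives $\|\phi_{0,y}-\phi_{\mathrm{KE},y}\|_{L^\infty(X_y)}\leq C(U)$ for every $y\in U$. An inspection of Lemma~\ref{lem_key} and the proof of Theorem~\ref{thm: construction-KE} shows that the error $\varepsilon_k = O(k^{-3})$ depends only on the bounded geometry of $\omega_{\mathrm{KE},y}$ and is therefore uniform for $y\in U$; consequently $\sup_{X_y}|\phi_{k,y}-\phi_{\mathrm{KE},y}| = O(1/k)$ with an implicit constant uniform over $y\in U$. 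Combining (a) and (b), $\phi_{\mathrm{KE}}$ is a locally uniform limit of psh weights and hence psh, which proves the first assertion. For the extension under hypothesis (iii), the local weights of $\phi_{\mathrm{KE}}$ are psh on the complement of the analytic subset $X\setminus X^\circ$ and locally bounded from above across it by (iii); the standard removable-singularity theorem for psh functions then produces a psh extension across $X\setminus X^\circ$, yielding the canonical extension of $\phi_{\mathrm{KE}}$ to a semi-positively curved singular Hermitian metric on the whole of $K_{X/Y}$.
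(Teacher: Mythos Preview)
Your proposal is correct and follows the same route as the paper: run Tsuji's iteration fiberwise starting from $\phi_0$, propagate semi-positivity through the iteration via Theorem~\ref{thm_variation} applied with $L=K_{X/Y}^{\,k}$ and weight $k\phi_k$, and conclude by the locally uniform convergence $\phi_{k,y}\to\phi_{\mathrm{KE},y}$ furnished by Theorem~\ref{thm: construction-KE} together with hypothesis~(i). The extension step under~(iii) is also handled the same way.

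One technical point you leave implicit but the paper addresses explicitly: to invoke Theorem~\ref{thm_variation} at step $k$ you need
\[
H^0\bigl(X_y,\,K_{X_y}^{(k+1)}\otimes\mathcal{I}(k\phi_{k,y})\bigr)\neq 0
\]
for some $y\in Y^\circ$, and this is not automatic for small $k$ (if it fails the Bergman kernel is identically zero and the iteration collapses). The paper secures this by using the peak-section construction from the proof of Theorem~\ref{thm: Bergman}, together with the bound $\|\phi_{0,y}-\phi_{\mathrm{KE},y}\|_{L^\infty}\leq C$, to show the space is nonzero once $k\geq k_0$ for some $k_0$ depending only on the bounded geometry; accordingly the iteration is started at index $k_0$ with initial metric $\phi_{k_0}:=\phi_0$, rather than at $k=1$. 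This also feeds back into the induction: at each subsequent step the uniform estimate $\|\phi_{k,y}-\phi_{\mathrm{KE},y}\|\leq C$ (from Lemma~\ref{lem_key}) again guarantees nonvanishing of the relevant $H^0$.
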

The condition (ii) holds whenever one can compare the Bergman kernel of $H^0(X_y,K_{X_y})$ with the K\"ahler--Einstein measure $\omega_{\mathrm{KE},y}^n$. Indeed, in this case, the fiberwise Bergman kernel for $H^0(X_y, K_{X_y})$ gives us a Hermitian metric $\phi_0$ on $K_{X/Y}$ which satisfies $(ii)$.  The following corollary illustrates this using the comparison between the Bergman kernel and the K\"ahler--Einstein measure in Greene--Wu \cite{Greene_Wu}. 

\begin{corollary}
 Let $p:X\to Y$ be in Setup~\ref{setup: family}.  Assume further that  $X$ is weakly pseudoconvex and that
\begin{itemize}
    \item[(i)] for every $y_0\in Y^\circ$, there exist a neighborhood of $y_0$, $U\Subset Y^\circ$ and $0<A_U< B_U$  such that for any $y\in U$, $X_y$ admits a K\"ahler metric $\omega_y$ whose holomorphic sectional curvature belongs to $(-B_U, -A_U)$;   
    \item[(ii)] $X_y$ is simply-connected. 
\end{itemize}
Then, the curvature current of the Hermitian metric on $(K_{X/Y})|_{X^\circ}$ induced by the K\"ahler--Einstein metrics on the fibers is positive. 
\end{corollary}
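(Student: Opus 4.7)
The plan is to reduce the corollary to Theorem~\ref{thm: variation-psh} by verifying its hypotheses (i) and (ii) in this setting; condition (iii) is not needed since only positivity on $X^\circ$ is claimed.

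First I would establish (i). Since the holomorphic sectional curvature of $\omega_y$ is bounded above by $-A_U<0$, the Wu--Yau theorem~\cite{Wu-Yau20invariant} applied fiberwise produces, for every $y\in U$, a unique complete K\"ahler--Einstein metric $\omega_{{\rm KE},y}$ on $X_y$ with $\mathrm{Ric}(\omega_{{\rm KE},y})=-\omega_{{\rm KE},y}$, obtained as $\omega_{{\rm KE},y}=\omega_y+\ddc u_y$ with $u_y$ the bounded solution of a suitable complex Monge--Amp\`ere equation on $(X_y,\omega_y)$. The a priori $C^k$-estimates underlying Wu--Yau's argument (Schauder plus bootstrap on quasi-coordinate charts adapted to $\omega_y$) depend only on the pinching constants $A_U,B_U$ together with uniform data for $\omega_y$ and its derivatives on $U$. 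This gives $(X_y,\omega_{{\rm KE},y})$ bounded geometry of any order $\ell\geq 5$, with constants depending only on $U$.

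Next I would verify (ii). Applying Theorem~\ref{thm_variation} with $L=\mathcal{O}_X$ equipped with the trivial metric (using that $X$ is weakly pseudoconvex K\"ahler and that $H^0(X_{y_0},K_{X_{y_0}})\neq 0$, which is guaranteed by bounded geometry on the complete K\"ahler--Einstein manifold $(X_{y_0},\omega_{{\rm KE},y_0})$), we obtain a singular Hermitian metric $h_0=e^{-\phi_0}$ on $K_{X/Y}$ with semi-positive curvature current such that $\phi_{0,y}=\log\K_y$, where $\K_y$ denotes the Bergman kernel of $H^0(X_y,K_{X_y})$ computed against the K\"ahler--Einstein volume form $\omega_{{\rm KE},y}^n/n!$. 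The key analytic input is the Greene--Wu comparison~\cite{Greene_Wu}: on a simply connected complete K\"ahler manifold whose holomorphic sectional curvature is pinched between two negative constants, the Bergman kernel of the canonical bundle and the K\"ahler--Einstein volume form are two-sided uniformly comparable, with constants depending only on the pinching. Applying this fiberwise with constants depending only on $A_U,B_U$ yields
\[
c_1(U)\leq \frac{\K_y}{\omega_{{\rm KE},y}^n/n!}\leq c_2(U)
\]
on $X_y$ for every $y\in U$. Taking logarithms, this gives the required $L^\infty$ bound $\|\phi_{0,y}-\phi_{{\rm KE},y}\|_{L^\infty(X_y)}\leq C(U)$, so hypothesis (ii) of Theorem~\ref{thm: variation-psh} holds.

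The main obstacle is the bookkeeping of local uniformity in $y$ at two places: extracting from the Wu--Yau/Cheng--Yau machinery that the higher-order a priori estimates for $\omega_{{\rm KE},y}$ (and hence the bounded-geometry constants in Definition~\ref{def:bd_geo}) depend only on $A_U,B_U$ and on uniform data of $\omega_y$ on $U$, rather than on the individual fiber; and verifying that the Greene--Wu Bergman/volume comparison holds with constants depending only on the pinching bounds, so that the $L^\infty$ control in (ii) is genuinely uniform on $U$. Modulo this uniformity, the corollary follows by direct application of Theorem~\ref{thm: variation-psh} to the metric $\phi_0$ constructed above.
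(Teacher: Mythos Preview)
Your proposal is correct and follows essentially the same route as the paper: Wu--Yau for the fiberwise K\"ahler--Einstein metric (with constants depending only on the pinching $A_U,B_U$), then the Greene--Wu comparison between the fiberwise Bergman kernel and the K\"ahler--Einstein volume, so that the relative Bergman kernel metric on $K_{X/Y}$ (positively curved by Theorem~\ref{thm_variation}) serves as the $\phi_0$ in hypothesis (ii) of Theorem~\ref{thm: variation-psh}. One small imprecision: the fiberwise Bergman kernel of $K_{X_y}$ is intrinsic (the $L^2$ norm $\int_{X_y}c_n u\wedge\bar u$ needs no reference volume form), so it is not ``computed against $\omega_{{\rm KE},y}^n/n!$''; what you actually use is that $\K_y$, viewed as a volume form, is two-sided comparable to $\omega_{{\rm KE},y}^n$ via Greene--Wu, which is exactly what the paper does.
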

\begin{proof}
  It follows from \cite[Corollary 7]{Wu-Yau20invariant} thatfor any $y\in U$, the fiber $X_y$ admits a unique K\"ahler--Einstein metric with bounded geometry, uniformly equivalent to $\omega_y$, with equivalence constants depending only on $A_U$, $B_U$, and $\dim X_y$. Moreover, by \cite[p. 144, Theorem H (3)]{Greene_Wu} the fiberwise Bergman kernel $\K_y$ satisfies $A_1\,\omega_y^n \le \K_y \le A_2\,\omega_y^n$ for some positive constants $A_1,A_2$ depending only on $A_U$, $B_U$, and $\dim X_y$.  thus the fiberwise Bergman kernel defines a Hermitian metric $\phi_0$ on $K_{X/Y}$ satisfying (ii) in Theorem~\ref{thm: variation-psh}. 
\end{proof}
 \begin{proof}[Proof of Theorem~\ref{thm: variation-psh}]  
 Fix $y\in Y^\circ$. We first note that $H^0(X_y, K_{X_y}^{k_0+1}\otimes \mathcal{I}(k_0\phi|_{X_y}))\neq 0$ for some $k_0>0$. By the H\"ormander $L^2$ estimate, there exists $k_0>0$ depending only on the bounded geometry of $(X_y,\omega_{\mathrm{KE},y})$ such that
$H^0(X_{y}, K^{(k_0+1)}_{X_{y}} \otimes \mathcal{I}(k_0\phi_{ \rm KE}|_{X_{y}}))\neq 0$. By $(ii)$ we have $\| \phi_{0,y}-\phi_{{\rm KE}, y}\|\leq C$,  hence $H^0(X_{y}, K^{(k_0+1)}_{X_{y}} \otimes \mathcal{I}(k_0\phi_{}|_{X_{y}}))\neq 0$. 

We now adapt the argument  in~\cite{Tsuji10-construction-KE}. 
Set $L_{k_0}:=K_{X/Y}^{k_0}$ and $h_{k_0}=e^{-\phi_{L_{k_0}}}$ with $\phi_{L_{k_0}}:=-k_0\phi_0$, which has positive curvature current.  
 By iteration, at step $k\geq k_0$, assume that  $L_k:=K_{X/Y}^k|_{X^\circ}$ admits a Hermitian metric $h_k=e^{-k\phi_k}$ with $ \ddc\phi_k\geq 0$. Define $\phi_{k+1}:=\frac{1}{k}\log(\K_{k\phi_k}/d_k)$, where $\K_{k\phi_k}$ is the Bergman kernel of $(K_{X/Y}+L_k)|_{X^\circ}$. On each fiber, set $\phi_{k+1,y}=\beta_k(\phi_{k,y})=\frac{1}{k}(\log \K_{k\phi_{k,y}}/d_k)$. By Theorem~\ref{thm: Bergman}, $\|\phi_{k,y}-\phi_{\mathrm{KE},y}\|\le C$, hence $H^0(X_y,(K_{X_y}+L_k|_{X_y})\otimes \mathcal{I}(\phi_{L_k}|_{X_y}))\neq 0$. Applying Theorem~\ref{thm_variation}, we conclude that $\ddc\phi_{k+1}\ge 0$.

 \smallskip
     For any $y_0\in Y^\circ$, choose $U\Subset Y^\circ$ containing $y_0$. By the hypothesis and Theorem~\ref{thm: construction-KE}, $\|\phi_{k, y'} -\phi_{{\rm KE},y'}\|_{L^\infty(X_{y'})}\leq C(U)/k$ for all $y'\in U$,  hence $\|\phi_{k} -\phi_{\rm KE}\|_{L^\infty(p^{-1}(U))}\leq C(U)/k$. By locally uniform convergence, the limit $\phi_{\mathrm{KE}}:=\lim_k \phi_k$ defines a singular Hermitian metric on $(K_{X/Y})|_{X^\circ}$ with positive curvature current $T:=\ddc\phi_{\mathrm{KE}}$. 
\end{proof}
\subsection{Families of K\"ahler--Einstein metrics on bounded pseudoconvex domains}

We recall some properties of pseudoconvex domains (cf. \cite{ChengYau80-complete}).  Let $\Omega$ be a strongly pseudoconvex domain in $\mathbb{C}^n$.  Let $\rho$ be a smooth defining function for $\Omega$, i.e., $\rho\in\mathcal{C}^\infty(\overline{\Omega})$, $\Omega=\{\rho<0\}$, $\partial\rho\neq 0$ on $\partial\Omega$, and $(\rho_{i\Bar{j}})>0$ in $\overline\Omega$.  We define $\varphi_\Omega = -(n+1)\log (-\rho)$, which is a strictly plurisubharmonic function on $\Omega$ and $\varphi_\Omega(z) \rightarrow +\infty$ as $z\rightarrow \partial \Omega$.  It follows from \cite{ChengYau80-complete} that  $(\Omega, \omega_{\varphi_\Omega
}) $ is a complete K\"ahler manifold, where $\omega_{\varphi_\Omega}= dd^c\varphi_\Omega$. Since 
$$\det ((\varphi_\Omega)_{i\bar j }) = (n+1)^n \left(-\frac{1}{\rho}\right)^{n+1} \det(\rho_{i\bar j })(-\rho +|d\rho|^2),$$
 we have the Ricci curvature
$${\rm Ric}(\omega_{\varphi_\Omega}) = - (\varphi_\Omega)_{i\bar j }+ \partial_i\partial_{\bar j}F ,$$
where  $F=-\log [\det(\rho_{\bar j i})(-\rho +|d\rho|^2)]$ is smooth  in $\Omega$ and bounded on $\overline \Omega$.  

\medskip
As explained in \cite{ChengYau80-complete} that $(\Omega, \omega_{\varphi_\Omega})$  has bounded geometry. The following theorem is due to Cheng--Yau \cite{ChengYau80-complete,  Mok-Yau83-completeness}.
\begin{theorem}
\label{thm_CY}
There exists  a unique complete K\"ahler--Einstein metric $\omega_{\rm KE}=dd^c\phi_{\rm KE}$ with $-(n+1)$ scalar curvature
which solves
\begin{eqnarray}
\label{KE_eq}
(\ddc \phi_{\rm KE})^n= e^{(n+1)(\phi_{\rm KE} -\phi_\Omega)
+F}\omega_{\varphi_\Omega}^n,
\end{eqnarray}
and $|\nabla^k_{\varphi_\Omega}( \phi_{\rm KE}-\varphi_\Omega) |\leq C_k$ for any $k\in \mathbb{N}$.
Moreover, $(\Omega, \omega_{\rm KE})$ has bounded geometry of any order.
\end{theorem}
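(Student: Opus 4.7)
\smallskip

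\textbf{Proof proposal.} The plan is to reduce the existence of $\phi_{\rm KE}$ to solving a complex Monge--Amp\`ere equation on the complete K\"ahler manifold $(\Omega,\omega_{\varphi_\Omega})$ by the continuity method, and then derive the bounded geometry of $(\Omega,\omega_{\rm KE})$ from the resulting a priori estimates. Writing $\phi_{\rm KE}=\varphi_\Omega+u$, equation \eqref{KE_eq} becomes
\begin{equation*}
(\omega_{\varphi_\Omega}+\ddc u)^n = e^{(n+1)u+F}\,\omega_{\varphi_\Omega}^n,
\end{equation*}
where $F$ is smooth and uniformly bounded on $\overline{\Omega}$. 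I would embed this into the one-parameter family
\begin{equation*}
(\omega_{\varphi_\Omega}+\ddc u_t)^n = e^{(n+1)u_t+tF}\,\omega_{\varphi_\Omega}^n,\qquad t\in[0,1],
\end{equation*}
and run the continuity method in the Banach space of bounded smooth functions with bounded derivatives of all orders measured by $\omega_{\varphi_\Omega}$. The starting point $u_0\equiv 0$ is a solution, and openness follows from the invertibility of the linearized operator $\Delta_{\omega_{u_t}}-(n+1)\Id$, which is an isomorphism on a weighted H\"older space because its spectrum avoids $0$ on a complete K\"ahler manifold of bounded geometry (the $(n+1)$ coefficient has the right sign).

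\smallskip

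The core of the argument is closedness, which requires a priori estimates independent of~$t$. First, the $C^0$ estimate: applying Yau's generalized maximum principle on the complete manifold $(\Omega,\omega_{\varphi_\Omega})$ at a point where $u_t$ almost attains its supremum/infimum, one has $\ddc u_t\le 0$ there, so $(n+1)u_t+tF\le 0$, giving the two-sided bound $\sup_\Omega|u_t|\le \|F\|_{L^\infty}/(n+1)$. Second, the Laplacian (i.e.\ $C^2$) estimate: I would adapt Yau's computation of $\Delta_{\omega_{u_t}}\log(n+\Delta_{\omega_{\varphi_\Omega}}u_t)$, using the uniformly bounded bisectional curvature of $\omega_{\varphi_\Omega}$ (which is a consequence of the bounded geometry established by Cheng--Yau for the reference metric) together with Yau's maximum principle to replace compactness. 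Once $\omega_{u_t}$ and $\omega_{\varphi_\Omega}$ are uniformly equivalent, higher-order estimates $|\nabla^k_{\varphi_\Omega}u_t|\le C_k$ follow from a Evans--Krylov--Schauder bootstrap carried out in the local quasi-coordinate charts of Definition~\ref{def:bd_geo}, where the equation becomes a uniformly elliptic complex Monge--Amp\`ere equation on a fixed Euclidean ball with coefficients bounded in $C^k$.

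\smallskip

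These uniform estimates allow passage to the limit $t\to 1$ and give a smooth solution $\phi_{\rm KE}$ with $|\nabla^k_{\varphi_\Omega}(\phi_{\rm KE}-\varphi_\Omega)|\le C_k$. Uniqueness is immediate from the maximum principle: if $\phi,\phi'$ are two bounded (relative to $\varphi_\Omega$) solutions, then at a near-supremum point of $\phi-\phi'$ one obtains $\ddc(\phi-\phi')\le 0$, and concavity of $\log\det$ combined with the strictly increasing factor $e^{(n+1)(\cdot)}$ forces $\phi\le\phi'$; the reverse inequality is symmetric. Finally, the bounded geometry of $(\Omega,\omega_{\rm KE})$ to any order is deduced from that of $\omega_{\varphi_\Omega}$: the two metrics are uniformly quasi-isometric on $\Omega$ by the $C^2$ estimate, so the quasi-coordinate charts for $\omega_{\varphi_\Omega}$ remain quasi-coordinate charts for $\omega_{\rm KE}$, and the higher-order estimates $|\nabla^k_{\varphi_\Omega}u|\le C_k$ translate into uniform $C^k$-bounds for the components of $\omega_{\rm KE}$ in these charts.

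\smallskip

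The hardest step is the $C^2$ (Laplacian) estimate in this non-compact setting: on a compact K\"ahler manifold Yau's maximum principle is trivial, but here one must argue with the Omori--Yau generalized maximum principle, which requires completeness and a Ricci lower bound. Since $\Ric(\omega_{\varphi_\Omega})$ is only known to be bounded thanks to the explicit formula for $F$, one must carefully control the error terms in Yau's inequality using the boundedness of the bisectional curvature of $\omega_{\varphi_\Omega}$ (a direct consequence of bounded geometry of order $\ge 2$) to close the estimate uniformly in~$t$.
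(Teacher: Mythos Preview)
The paper does not actually supply a proof of this statement: Theorem~\ref{thm_CY} is stated as a known result and attributed to Cheng--Yau \cite{ChengYau80-complete} and Mok--Yau \cite{Mok-Yau83-completeness}. So there is no ``paper's own proof'' to compare against; what you have written is, in outline, precisely the Cheng--Yau argument that the paper is invoking by citation.

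Your sketch is essentially correct and faithful to the original: the substitution $u=\phi_{\rm KE}-\varphi_\Omega$, the continuity path $(\omega_{\varphi_\Omega}+\ddc u_t)^n=e^{(n+1)u_t+tF}\omega_{\varphi_\Omega}^n$, the $C^0$ bound via Yau's generalized maximum principle, the second-order estimate using bounded bisectional curvature of $\omega_{\varphi_\Omega}$, and the higher-order bootstrap carried out in quasi-coordinate charts are all the ingredients of \cite{ChengYau80-complete}. Two small comments. First, in Cheng--Yau the openness step is handled not by abstract spectral theory on the whole noncompact manifold but by working in the H\"older spaces defined via the quasi-coordinate atlas (their ``bounded geometry'' function spaces), where the linearization $\Delta_{\omega_{u_t}}-(n+1)$ is uniformly elliptic with the correct sign and standard Schauder theory applies chart by chart; your formulation is morally the same but you should be explicit that the Banach space in which the implicit function theorem is run is built from these quasi-coordinates. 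Second, the uniqueness argument you give is fine for solutions with $u$ bounded, which is exactly the class in which Cheng--Yau's theorem asserts uniqueness; uniqueness among all complete K\"ahler--Einstein metrics (without the a priori boundedness of $u$) is a separate and more delicate statement.
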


Let $p:D\subset\mathbb{C}^{n}\times\mathbb{C}^m\to B\subset \mathbb{C}^m$ be the second projection where $D$ is a strictly pseudoconvex domain (with smooth boundary), $B$ is an open ball. Suppose that for each $t\in B$, $D_t:=p^{-1}(t)$  is bounded strongly pseudoconvex domain with
smooth boundary. It follows from~\cite{ChengYau80-complete,  Mok-Yau83-completeness} (cf. Theorem \ref{thm_CY}) that on each
 slice $D_t$ there exists a unique complete K\"ahler form $\omega_{\phi_t}$ such that 
\[ \Ric(\omega_{\phi_t})=-(n+1)\omega_{\phi_t}.\]
Then we  have the  following corollary of Theorem \ref{thm: variation-psh}:
\begin{theorem}\label{thm_var_psh_domain}
    The metric $\phi_t$ induces a Hermitian metric on the relative line bundle $K_{D/B}$ with positive curvature current 
\end{theorem}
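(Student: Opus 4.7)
My plan is to reduce the claim to Theorem~\ref{thm: variation-psh} by verifying its hypotheses for the projection $p : D \to B$. Since $D$ is an open subset of $\mathbb{C}^{n+m}$ that is strictly (hence weakly) pseudoconvex and every fiber $D_t$ has smooth boundary, the set of non-critical values is all of $B$, so condition (iii) of Theorem~\ref{thm: variation-psh} is vacuous. Only (i) and (ii) need to be produced.

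For (i), I would invoke Cheng--Yau's Theorem~\ref{thm_CY}: on each strongly pseudoconvex $D_t$ there is a unique complete Kähler--Einstein metric with $\mathrm{Ric}(\omega_{\phi_t}) = -(n+1)\omega_{\phi_t}$ and bounded geometry of every order. After rescaling by $n+1$ the normalization matches $\mathrm{Ric}(\omega_{\mathrm{KE}}) = -\omega_{\mathrm{KE}}$ used in Sections~\ref{sect: main thm}--\ref{sect: pos-variation}. For local uniformity in $t$, I would inspect the Cheng--Yau proof: the bounded-geometry constants depend only on the strict plurisubharmonicity of $\rho$ and on $\|\rho\|_{C^\ell}$ in a neighborhood of $\overline{D}_t$, both of which vary continuously in $t$ since $\rho \in C^\infty(\overline{D})$. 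Consequently, on any compact $K \Subset B$ they admit a uniform bound.

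For (ii) I take the natural candidate
$$\phi_0(z, t) := -(n+1)\log\bigl(-\rho(z, t)\bigr),$$
regarded as a weight on $K_{D/B}$ via the trivialization $dz_1 \wedge \cdots \wedge dz_n$. The standard identity
$$dd^c\bigl(-\log(-\rho)\bigr) = \frac{dd^c\rho}{-\rho} + \frac{d\rho \wedge d^c\rho}{\rho^2} \geq 0$$
on $D$ shows $\phi_0$ is plurisubharmonic, so it defines a smooth Hermitian metric on $K_{D/B}$ with semi-positive curvature current. On each fiber $\phi_0|_{D_t}$ is precisely the Cheng--Yau reference potential $\varphi_{\Omega_t}$, so Theorem~\ref{thm_CY} gives $\|\phi_{\mathrm{KE},t} - \phi_0|_{D_t}\|_{L^\infty(D_t)} \leq C$, locally uniform in $t$ by the same continuity argument as above.

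Granted (i) and (ii), Theorem~\ref{thm: variation-psh} yields the desired positively curved Hermitian metric on $K_{D/B}$ whose restriction to each fiber is the Kähler--Einstein metric, establishing Theorem~\ref{thm_var_psh_domain}. The principal technical point will be the local uniformity in $t$ of all Cheng--Yau quantities --- the bounded-geometry constants of $(D_t, \omega_{\phi_t})$ and the $L^\infty$-comparison $\phi_{\mathrm{KE},t} - \varphi_{\Omega_t}$ --- since the iteration of Theorem~\ref{thm: construction-KE} used in the proof of Theorem~\ref{thm: variation-psh} requires a $t$-uniform convergence rate. This uniformity is where the smooth dependence of the defining function $\rho$ on the parameter $t$ is genuinely used, and it should follow by a continuity-of-solutions argument applied to the fiberwise Monge--Amp\`ere equation~\eqref{KE_eq}.
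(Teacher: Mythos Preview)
Your proposal is correct and follows essentially the same route as the paper: both reduce to Theorem~\ref{thm: variation-psh} with the reference metric $\phi_0=-(n+1)\log(-\rho)$ on $K_{D/B}$, verifying its plurisubharmonicity and the fiberwise $L^\infty$-comparison with $\phi_{\mathrm{KE},t}$. The one place where the paper is more direct is the uniformity of $\|\phi_{\mathrm{KE},t}-\phi_{0,t}\|_{L^\infty(D_t)}$: rather than appealing to continuity of the Cheng--Yau constants in $t$, the paper applies the maximum principle of \cite[Proposition~4.1]{ChengYau80-complete} to the fiberwise equation to obtain $\sup_{D_t}|u_t|\le \frac{1}{n+1}\sup_{D_t}|F|$, and since $F$ extends smoothly to $\overline{D}$ this bound is manifestly uniform in $t$---a cleaner argument than tracking the implicit constants through the Cheng--Yau existence proof.
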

The result was proved by  Choi \cite{Choi15-variations-pseudoconvex,Choi15-study} using the boundary behavior of the geodesic curvature, which satisfies a certain elliptic equation, and by  Tsuji \cite{Tsuji13-construction-KE-pseudoconvex} using his construction of K\"ahler--Einstein metrics. 

\begin{proof}
Let $(z,t)\in\mathbb{C}^n\times\mathbb{C}^m$ be the standard coordinates. We denote by $\rho$ the defining function of $D$ which satisfies \begin{itemize}
    \item $\rho\in\mathcal{C}^\infty(\overline D)$ and $D=\{(z,t): \rho(z,t)<0\}$;
    \item $\partial\rho\neq 0$ on $\partial D$,
    \item  $(\rho_{i\Bar j}(\cdot,t))>0$ in $\overline  D$.
\end{itemize} This implies that $\varphi_D(\cdot, t)=-\log (-\rho(\cdot, t))$ is a strictly smooth plurisubharmonic function in $D_t$ for each $t\in B$. 
Applying Theorem~\ref{thm: variation-psh}, it suffices to show that     $\|\phi_{t}-\varphi_{D_t}\|\leq C$ independent of $t$. Setting $u_t=\phi_t-\varphi_D(\cdot,t)$, it solves
\[ \det ((\varphi_D)_{i\Bar j}(\cdot, t)+(u_t)_{i\Bar j})= e^{(n+1)u+F}\det ((\varphi_D)_{i\Bar j}(\cdot, t))\] in $D_t$ where the function $F$ is bounded on $\overline D$ (defined as above). Applying Theorem~\ref{thm: variation-psh}, it suffices to show that $\|u_t\|_{L^\infty(D_t)}\leq C$ uniformly in $t$.
Indeed, it follows from the maximum principle~\cite[Proposition 4.1]{ChengYau80-complete} that $$\sup_{D_t}|u_t|\leq \frac{1}{n+1}\sup_{D_t}|F|\leq C$$ for a constant $C>0$ uniformly in $t$. This finishes the proof. 
\end{proof}

\subsection{Families of K\"ahler--Einstein metrics with  cusp singularities}
 Let $p:\overline{X} \to Y$ be a holomorphic subjective proper map between compact K\"ahler manifolds with relative dimension $n$, $D=\sum_{i=1}^\ell D_i$  a reduced divisor with  simple normal crossing support. Let $X:=\overline{X}\setminus D$. Assume $Y^\circ$ is the largest Zariki open subset of $Y$ such that if $\overline{X}^\circ:=p^{-1}(Y^\circ)$ then every fiber ${\overline X}_y:= p^{-1}(\{y\})$  is smooth, $D_y:=D\cap {\overline{X}_y}$ has simple normal crossing support. We denote $X^\circ=\overline{X}^\circ \setminus D $.


Assume that $K_{\overline{X}_y}+D_y$ is ample for any $y\in Y^\circ$. By the well-known result of Kobayashi \cite{Kobayashi84-KE-open} and Tian--Yau~\cite{TianYau87-KE-complete}, building upon Cheng--Yau’s strategy~\cite{ChengYau80-complete}, for each fiber, there exists a (unique) complete K\"ahler metric $\omega_y$ on $X_y:= \overline{X}_y\backslash D_{y}$ with cusp singularities along $D_{y}$ and $\Ric(\omega_y)=-\omega_y$ on $X_y$.  The metric $\omega_y$ can be extended as a positive current on $\overline{X}_y$, which satisfies \begin{equation}\label{eq: KE-cusp1}
    \Ric(\omega_y)=-\omega_y+[D_{y}]
\end{equation} in the sense of currents on $\overline{X}_y$, where $[D_y]$ is the integration current along $D_y$. 

Then we will see in  Theorem \ref{thm: variation-psh2} below that a  similar statement in  Theorem \ref{thm: variation-psh} also holds in this setting, hence we get the following application:
\begin{theorem}  Assume the setup above, then the fiberwise cusp K\"ahler--Einstein metrics $\omega_y$ satisfying~\eqref{eq: KE-cusp1} can be glued to define a current $T$ on  $\overline{X}$ such that: 
    \begin{enumerate}[label=(\roman*)]
        \item $T$ is positive on $X^\circ= \overline{X}^\circ\backslash D$ ;
        \item $T$ extends canonically across $(X\backslash X^\circ)\cup D$ to a closed positive current.
    \end{enumerate}
\end{theorem}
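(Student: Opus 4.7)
The plan is to realize the fiberwise cusp K\"ahler--Einstein metric $\omega_y$ as the curvature of a Hermitian weight $\phi_{\textrm{KE},y}$ on the restriction of the log relative canonical bundle $K_{\overline{X}/Y}+D$ to the smooth fiber $X_y$, and then apply the analogue of Theorem~\ref{thm: variation-psh} in the logarithmic setting (i.e.\ Theorem~\ref{thm: variation-psh2}) to this line bundle. Concretely, the ample line bundle $(K_{\overline{X}_y}+D_y)|_{X_y}$ carries the weight $\phi_{\textrm{KE},y}$ characterized by $\ddc \phi_{\textrm{KE},y}=\omega_y$, and equation~\eqref{eq: KE-cusp1} translates into a Monge--Amp\`ere equation of exactly the shape treated in Section~\ref{sect: main thm}, now on a non-compact fiber with a cusp singularity along $D_y$.

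First I would verify the fiberwise hypotheses (i)--(ii) of Theorem~\ref{thm: variation-psh} in this log setting. For (i), the classical work of Kobayashi and Tian--Yau produces quasi-coordinates on $X_y=\overline{X}_y\setminus D_y$ near $D_y$ by pulling back coordinates through local universal covers $(\Delta^*)^\ell\times\Delta^{n-\ell}$, in which $(X_y,\omega_y)$ has bounded geometry of all orders with constants depending only on $(\overline{X}_y,D_y)$ and the ambient K\"ahler class. Because $p:\overline{X}\to Y$ is proper and the data $(\overline{X}_y,D_y,K_{\overline{X}_y}+D_y)$ vary smoothly with $y\in Y^\circ$, these constants can be chosen locally uniformly in $y$. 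For (ii), a smooth Hermitian metric on $K_{\overline{X}/Y}+D$ suitably twisted by the relatively ample polarization provides a reference weight $\phi_0$ with semi-positive curvature, and the $L^\infty$-closeness of $\phi_{0,y}$ and $\phi_{\textrm{KE},y}$ on compact subsets of $Y^\circ$ follows from standard a priori estimates for the cusp complex Monge--Amp\`ere equation (cf.~\cite{Kobayashi84-KE-open, TianYau87-KE-complete, Guenancia_Wu_2016}) combined with the maximum principle applied uniformly in $y$.

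Once (i)--(ii) are in place, the argument of Theorem~\ref{thm: variation-psh} runs verbatim. Starting from $\phi_1:=\phi_0$, the fiberwise Tsuji--Bergman iteration produces a sequence of singular Hermitian metrics $\phi_k$ on $k(K_{\overline{X}/Y}+D)|_{X^\circ}$. Theorem~\ref{thm_variation} guarantees that each $\phi_k$ has semi-positive curvature current on $X^\circ$, since the fiberwise Bergman kernels of an $L^2$-holomorphic section space over the weakly pseudoconvex K\"ahler manifold $\overline{X}^\circ\setminus D$ give positively curved relative metrics; Ohsawa--Takegoshi extension from Lemma~\ref{lem: OT-theorem} applies because the cusp metric has bounded geometry. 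Fiberwise, Theorem~\ref{thm: construction-KE} together with the local uniformity from (i)--(ii) ensures that $\phi_k$ converges locally uniformly on $X^\circ$ to a psh weight $\phi_{\textrm{KE}}$ whose restriction to each fiber is $\phi_{\textrm{KE},y}$. The current $T:=\ddc\phi_{\textrm{KE}}$ is then a closed positive $(1,1)$-current on $X^\circ$ that glues the fiberwise cusp K\"ahler--Einstein metrics.

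Finally, I would extend $T$ across the analytic subset $(X\setminus X^\circ)\cup D$ of $\overline{X}$. The weight $\phi_{\textrm{KE}}$ is locally bounded from above near this set: near $D$, the cusp K\"ahler--Einstein potential is dominated by a Poincar\'e-model potential which is locally bounded above as a weight on $K_{\overline{X}/Y}+D$; near the singular fibers, the upper bound is exactly the analog of condition (iii) in Theorem~\ref{thm: psh-variation}, which in the proper setting is automatic from uniform a priori estimates on $\|\phi_{\textrm{KE},y}\|_{L^\infty}$. A standard extension result for closed positive currents with locally bounded-above potentials then yields the desired extension to $\overline{X}$. The main technical obstacle I anticipate is verifying the uniform bounded geometry estimate (i) near $D$: one must transport the Kobayashi--Tian--Yau quasi-coordinate constructions consistently across the family so that the constants in Definition~\ref{def:bd_geo} do not degenerate as $y$ varies in $Y^\circ$, which requires care in gluing the local universal covers of the fibers into a family picture and in controlling the dependence on $y$ of the asymptotic behavior of $\omega_y$ along $D_y$.
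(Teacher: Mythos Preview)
Your overall strategy---verify hypotheses (i)--(ii) of Theorem~\ref{thm: variation-psh} in the log setting, invoke Theorem~\ref{thm: variation-psh2} to run the Tsuji--Bergman iteration, then extend across $(X\setminus X^\circ)\cup D$---matches the paper's. However, your choice of the reference metric $\phi_0$ has a genuine gap.

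You propose that ``a smooth Hermitian metric on $K_{\overline{X}/Y}+D$ suitably twisted by the relatively ample polarization'' furnishes $\phi_0$. But such a $\phi_0$ is \emph{bounded} on each compact fiber $\overline{X}_y$, while the cusp K\"ahler--Einstein weight satisfies $\phi_{\textrm{KE},y}=\phi_{0,y}^{\mathrm{smooth}}-\sum_i\log(\log|s_i|^2)^2+O(1)$ and hence tends to $-\infty$ near $D_y$. Consequently $\|\phi_{0,y}-\phi_{\textrm{KE},y}\|_{L^\infty(X_y)}=+\infty$ for every $y\in Y^\circ$, and condition (ii) fails outright; no a~priori estimate for the cusp Monge--Amp\`ere equation can repair this, because the discrepancy is in the asymptotic model, not in the bounded correction. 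The paper resolves this by taking the Poincar\'e-type reference
\[
\phi_0=\phi_\omega+\sum_i\phi_i-\sum_i\log(\log|s_i|^2)^2,
\]
so that $\psi_y:=\phi_{\textrm{KE},y}-\phi_{0,y}$ solves $(\omega_{\mathrm P,y}+dd^c\psi_y)^n=e^{\psi_y+F}\omega_{\mathrm P,y}^n$ with $F$ bounded on $\overline{X}\setminus D$; Yau's generalized maximum principle on the complete manifold $(X_y,\omega_{\mathrm P,y})$ then gives $\sup_{X_y}|\psi_y|\le\sup_{X_y}|F|\le C(U)$ uniformly for $y\in U\Subset Y^\circ$. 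This specific choice, and the maximum-principle argument it enables, is the missing ingredient in your verification of~(ii).

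A secondary point: the extension across the singular fibers is not ``automatic from uniform a priori estimates'' in the proper setting. One must show that $\phi_{\textrm{KE}}$ stays bounded above as $y$ approaches $Y\setminus Y^\circ$, and the paper defers to the (non-trivial) argument of \cite[\S 6]{Guenancia_2020} following \cite[\S 3.3]{paun17-relative-nef}. Your sketch of the extension across $D$ (upper-boundedness of the cusp potential) is correct in spirit; the paper supplements it with the explicit $\phi\to -\infty$ argument of \cite[Lemma~3.7]{Berman-Guenancia14-logcanonical}.
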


\begin{proof}
If one chooses a background K\"ahler metric $\omega$ on $\overline{X} $ then it induces a Hermitian metric $h^\omega_{\overline{X}/Y}=e^{-\phi_\omega}$ on $K_{\overline{X}/Y}$ with curvature form { $\Theta_{h^\omega}(K_{\overline{X}/Y})=dd^c \phi_\omega$}, cf.~\cite[\S 3.1]{paun17-relative-nef}. 
Fix smooth Hermitian metrics $h_i=h_{D_i}=e^{-\phi_i}$ on $\mathcal{O}_{\overline{X}}(D_i)$ with curvature form $\theta_{D_i}=dd^c\phi_i\in c_1(D_i)$. Denote $\theta_{i,y}=\sum\theta_{D_i|_{\overline{X}_y}}$ for every generic point $y\in Y^\circ$. Let $s_i$ be a holomorphic section cutting out $D_i$, and denote $|s_i|^2=|s_i|^2_{h_i}$ and $|s|^2=\prod|s_i|^2 $. 
On each fiber $\overline{X}_y$, we set $\eta_y=\Theta_{h^\omega}(K_{X_y})+\sum\theta_{D_i|_{X_y}}$ being chosen to be K\"ahler (the restriction of $\Theta_{h^\omega}(K_{\overline{X}/Y})+\sum\theta_{D_i}$).
The equation~\eqref{eq: KE-cusp1} can be written as
\begin{equation}
    \label{eq: cusp-MA} (\eta_y+\ddc\varphi_y)^n=|s|^{-2}{e^{\varphi_y}\omega^n},
\end{equation} where $\varphi_y$ is the unknown function. By~\cite[Theorem 4.2]{Berman-Guenancia14-logcanonical} (see also~\cite[Theorem 1.2]{dang2023kahler}), there exists a unique $\varphi_y\in \mathcal{E}^1(\overline {X}_y,\eta_y)$, which is the space of functions with finite energy. {We can glue the functions $\varphi_y$ to obtain a function $\varphi$ on $\overline{X}^\circ$. We will show that $\varphi$ defines a   Hermitian metric $\phi_{\rm KE}:= \phi_\omega+\sum_i\phi_i+ \varphi$ on  $(K_{X/Y})|_{X^\circ}$, with positive curvature current $T=dd^c\phi_{\rm KE},$ and $\phi_{\rm KE}$ extends across $X\backslash X^\circ$ to a singular Hermitian metric with closed positive current by applying Theorem~\ref{thm: variation-psh2} below, which is a variant of Theorem~\ref{thm: variation-psh}.}

\smallskip
It is convenient to work with the complete K\"ahler metric with Poincar\'e singularities $\omega_{P,y}=\eta_y-\sum_i\ddc\log(\log|s_i|^2)^2$ on $X_y=\overline{X}_y\backslash D_{y}$ (up to scaling the hermitian metrics on $\mathcal{O}_{\overline{X}}(D_i)$). It indeed defines 
a smooth K\"ahler metric with  Poincar\'e growth along $D_y$, having
bounded geometry at any order on $X_y=\overline{X}_y\backslash D_{y}$ as follows from~\cite[\S 1]{guenancia14-kahler-mixed}
(cf. also~\cite[Lemma 2]{Kobayashi84-KE-open}); all of these properties are satisfied uniformly in $y$. Setting $\psi_y=\varphi_y+\sum_i\log(\log|s_i|^2)^2$,
we have
\[ (\omega_{{\rm P},y}+\ddc \psi_y)^n= e^{\psi_y+F}\omega_{{\rm P},y}^n\]
where $F=\log\left( \frac{\omega^n} {\prod_i|s_i|^2\log|s_i|^2\omega_{\rm P}^n}\right)$ is known to be a bounded smooth function on $\overline{X}\backslash D$; cf.~\cite[Lemma 1.6]{guenancia14-kahler-mixed} or~\cite[Proposition 2.1]{carlson-Griffiths72}. 

For any $y_0\in Y^\circ$, take a neighborhood $U\Subset Y^\circ$ of $y_0$. 
{By the result of Kobayashi \cite{Kobayashi84-KE-open} and Tian--Yau~\cite{TianYau87-KE-complete}, we have that $\|\psi_y\|_{L^\infty( X_{y})}\leq C(U)$ for all $y\in U$.  Indeed,
since  the bisectional curvature of $(X_y,\omega_{{\rm P},y})$ is bounded below by some constant  depending on $U$ for all $y\in U$ (cf. \cite[Section 4]{guenancia14-kahler-mixed}), the Yau generalized maximum principle (see~\cite[Propotion 4.1]{ChengYau80-complete}) yields \[\sup_{X_y}|\psi_y|\leq \sup_{X_y} |F|\leq C(U) \]
for any $y\in U$ because $F$ is bounded on $\overline{X}\backslash D$.
}
Hence, for $\phi_0:= \phi_\omega+\sum_i \phi_i  -\sum_i\log(\log|s_i|^2)^2$
 and $\phi_{{\rm KE}}= \phi_\omega+\sum_i \phi_i+ \varphi$, we have
\[
\sup_{X_y}|\phi_{{\rm KE}, y}-\phi_{0,y}|= \sup_{X_y}|\psi_{y}|\leq C(U).
\]
Also,  the result of~\cite{Kobayashi84-KE-open,TianYau87-KE-complete} shows that
the K\"ahler--Einstein metric $dd^c \phi_{{\rm KE},y}=\eta_y+\ddc\varphi_y$ is complete and has bounded geometry on $X_y=\overline{X}_y\backslash D_{y}$ which is locally uniform in $y\in Y^\circ$. 

{With this setup, we apply Theorem~\ref{thm: variation-psh2}  to $\phi_0= \phi_\omega+\sum_i \phi_i  -\sum_i\log(\log|s_i|^2)^2$  and $(\phi_{{\rm KE}, y})_{y\in Y^\circ }$, to obtain a Hermitian metric $ \phi_{\rm KE}=\phi_\omega+\sum_i\phi_i+ \varphi$ on $(K_{X/Y})|_{X^0}$ which has a positive curvature current  $T=dd^c\phi_{\rm KE}$ on $X^\circ$. }
Moreover, we also have that $\phi_{{\rm KE}}$ is bounded from above near the singular fibers of $p|_X$.  This can be proved following the same argument as in \cite[Section 6]{Guenancia_2020}, which is based on the approach of P\u aun \cite[\S 3.3]{paun17-relative-nef}.  
 Therefore, we can extend $T$ to $X$ as a positive current. 
 
 {We conclude the proof by showing that $\phi = \phi_{\rm KE}$ extends across $D_{\rm reg}$, {where $D_{\rm reg}$ is the regular part of $D$}, and hence across $D$, following ~\cite[Lemma 3.7]{Berman-Guenancia14-logcanonical}. Fix $x \in D_{\rm reg}$, and let $(U, (z^1, \ldots, z^{n+m}))$ be local coordinates around $x$, where $\dim_{\mathbb{C}} \overline{X}=n+m$. In these coordinates, we write $D = (z^1 \cdots z^\ell = 0)$. It suffices to treat the case $\ell = 1$, as the general case can be proven similarly. We have the integral 
 \[\int_{U\backslash D}\frac{e^\phi \, {\rm d}V}{|z^1|^2} \leq C\int_{0<|z^1| < 1, |z^j|<1 }\frac{i{\rm d}z^1\wedge {\rm d}\bar z^1}{|z^1|^2\log^2 (|z^1|^2)}\wedge\bigwedge_{j=2}^{n+m} i{\rm d}
 z^j\wedge {\rm d}\Bar{z}^j< \infty.  \]  Suppose, by contradiction, that $\phi$ does not tend to $-\infty$ near $D$. Then, there exists a sequence $(z_k)$ in $U \setminus D$ converging to $D$ such that $\phi(z_k) \geq -C$ for some constant $C$. Let $\Delta_k$ denote the polydisk centered at $z_k$ with polyradius $(r_k, \delta, \ldots, \delta)$ for $r_k \searrow 0$ and some fixed $\delta > 0$, so that $\Delta_k \subset U \setminus D$. It follows from the sub-mean value inequality for $\phi$ at $z_k $ that 
\[ -C\leq \frac{1}{\Vol(\Delta_k)}\int_{\Delta_k}\phi\, {\rm d}V.\]
 Using Jensen's inequality, we obtain 
\[e^{-C(n)}\leq \int_{\Delta_k}\frac{e^\phi \,{\rm d}V}{r_k^2\delta^{2n}} \]
 and on $\Delta_k$, $|z^1|\leq 2r_k$ hence \[C'\leq\int_{\Delta_k}\frac{e^\phi \, {\rm d}V}{|z^1|^2}  \] with $C'=\frac{1}{4}e^{-C}\delta^{2n}$. Since the measure of $\Delta_k$ goes to zero as $k\to+\infty$ we infer that $\int_{U\backslash D}\frac{e^\phi dV}{|z^1|^2}$ is infinite, which is a contradiction.
 }
\end{proof}
 To end the section, we establish an extension result of the Ohsawa-Takegoshi type for the family of quasi-projective manifolds, which ensures that the statement in Theorem~\ref{thm_variation} and
    \ref{thm: variation-psh} holds in this case.
\begin{lemma}\label{OT-thm-quasi-proj}
Let $(\overline{X},\omega)$ be a K\"ahler manifold fibered over the disc $\Delta_r=\{t\in\mathbb{C}:|t|<r\}$ with $r>1$,  with compact fibers $\overline{X}_t$. 
Let $D$ be a reduced divisor with simple normal crossing support in $\overline{X}$ not contained in $\overline{X}_0$. Let $X=\overline{X}\backslash D$ and  $X_t=\overline{X}_t\backslash D$.
Let $L$ be a holomorphic line bundle on $\overline{X}$ equipped with a singular Hermitian metric $e^{-\phi}$ with semi-positive curvature. Let $u\in H^0(X_0, K_{X_0}+L|_{X_0})$ such that $\int_{X_0}u\wedge\bar ue^{-\phi}<\infty$. Then there exists $U\in H^0(X,K_X+L|_X)$ such that
$U=u\wedge dt$ for $t=0$ and \begin{equation}
    \label{eq: OT-inequ}
 \frac{1}{\pi r^2}\int_{X}U\wedge \Bar{U}e^{-\phi}\leq \int_{X_0}u\wedge \bar ue^{-\phi}.\end{equation}
\end{lemma}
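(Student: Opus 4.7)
The plan is to reduce Lemma~\ref{OT-thm-quasi-proj} to the Ohsawa--Takegoshi extension theorem already recorded as Lemma~\ref{lem: OT-theorem}, but applied to the ambient manifold $\overline{X}$ rather than to $X$. The point is that $\overline{X}$ itself is weakly pseudoconvex: since $p:\overline{X}\to\Delta_r$ is proper with compact fibers, the function $-\log(r^2-|p|^2)$ is a smooth plurisubharmonic exhaustion on $\overline{X}$, and $\overline{X}_0=p^{-1}(0)$ is a smooth submanifold of codimension one. So Lemma~\ref{lem: OT-theorem} will be applicable, provided we can feed into it an honest holomorphic section defined on all of $\overline{X}_0$ (not merely on $X_0$).

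Accordingly, the first substantive step is to show that $u$, initially given only on $X_0=\overline{X}_0\setminus D$, canonically defines a section of $(K_{\overline{X}_0}+L|_{\overline{X}_0})\otimes\mathcal I(\phi|_{\overline{X}_0})$ on all of $\overline{X}_0$. This is a purely local issue across the codimension-one SNC analytic subset $D\cap\overline{X}_0\subset\overline{X}_0$. On a small coordinate polydisc where $D\cap\overline{X}_0=(z_1\cdots z_k=0)$, the holomorphic section $u$ admits a Laurent-type expansion, and the finite $L^2$ norm $\int_{X_0} u\wedge\bar u\,e^{-\phi}<\infty$ forces the orders of vanishing of $u$ along each $\{z_j=0\}$ that are needed so that $u$ actually lies in the (coherent) multiplier ideal sheaf $\mathcal I(\phi)$ at every point of $D\cap\overline{X}_0$. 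In particular $u$ extends to a genuine element of $H^0(\overline{X}_0,(K_{\overline{X}_0}+L|_{\overline{X}_0})\otimes \mathcal I(\phi|_{\overline{X}_0}))$.

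With this extension in hand, I would apply Lemma~\ref{lem: OT-theorem} to $\overline{X}$, the codimension-one submanifold $\overline{X}_0$, the line bundle $L$ with its singular metric $e^{-\phi}$ (semipositively curved by hypothesis), and the extended section $u$. This produces $\widetilde U\in H^0(\overline{X},K_{\overline{X}/\Delta_r}+L)$ with $\widetilde U|_{\overline{X}_0}=u\wedge p^*dt$ and
\[
\frac{1}{\pi r^2}\int_{\overline{X}}\widetilde U\wedge\overline{\widetilde U}\,e^{-\phi}\le \int_{\overline{X}_0}u\wedge\bar u\,e^{-\phi}.
\]
Setting $U:=\widetilde U|_X$, viewed as a section of $K_X+L|_X$ after twisting by $p^*dt$, and using that the analytic subsets $D\subset\overline{X}$ and $D\cap\overline{X}_0\subset\overline{X}_0$ have Lebesgue measure zero, the integrals over $\overline{X}$ and $\overline{X}_0$ coincide with the corresponding integrals over $X$ and $X_0$, giving \eqref{eq: OT-inequ}.

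The only delicate step is the extension of $u$ from $X_0$ to $\overline{X}_0$. If $\phi$ were locally bounded on $\overline{X}_0$ in a neighborhood of $D\cap\overline{X}_0$, the $L^2$ condition together with Riemann's extension theorem across a codimension-one analytic subset would upgrade $u$ to an honest holomorphic section in a single line. The main obstacle is therefore handling the case where $\phi|_{\overline{X}_0}$ has $-\infty$ poles along $D\cap\overline{X}_0$: here one must use the coherence of $\mathcal I(\phi)$ together with a local Laurent-expansion/Skoda-type computation to match the orders of vanishing of $u$ with the strength of the singularity of $\phi$.
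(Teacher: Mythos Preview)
Your approach is correct and considerably more direct than the paper's. The paper does not reduce to $\overline X$: instead it reruns Cao's proof of the optimal Ohsawa--Takegoshi theorem directly on $X=\overline X\setminus D$, using Demailly's regularization of the singular weight, the complete Poincar\'e-type K\"ahler metric on $X$ (and on $X\setminus Z_k$), a three-step scheme with a smooth patched extension $\tilde u$, truncation functions $b_m(\log|t|^2)$, $\bar\partial$-estimates, and weak limits. Your reduction to Lemma~\ref{lem: OT-theorem} on the weakly pseudoconvex manifold $\overline X$ bypasses all of this and yields the lemma in essentially one line once the extension of $u$ across $D\cap\overline X_0$ is settled. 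What the paper's route buys is a self-contained argument that never leaves $X$ and illustrates that a complete K\"ahler metric (rather than weak pseudoconvexity) already suffices for the $\bar\partial$-method; for the lemma as stated, however, your shortcut is both valid and simpler.

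One correction: the step you flag as the ``main obstacle'' is in fact immediate, and the difficulty you describe does not arise. Since $e^{-\phi}$ has semipositive curvature, the local weights $\phi$ are plurisubharmonic and hence locally bounded \emph{above}; thus $e^{-\phi}\ge e^{-C}>0$ locally near any point of $D\cap\overline X_0$. The hypothesis $\int_{X_0}|u|^2e^{-\phi}<\infty$ therefore already gives $u\in L^2_{\rm loc}$ in the ordinary sense near $D\cap\overline X_0$, and the $L^2$ Riemann extension theorem extends $u$ holomorphically across this codimension-one analytic set with no further input. Membership of the extended $u$ in $\mathcal I(\phi|_{\overline X_0})$ is then automatic, since $D\cap\overline X_0$ has measure zero. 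No Laurent expansion, Skoda-type estimate, or coherence argument is needed; your worry was inverted, since $-\infty$ poles of $\phi$ make the weight $e^{-\phi}$ \emph{larger} and the $L^2$ condition \emph{stronger}.
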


\begin{proof}
    The proof is almost identical to that of~\cite[Theorem 1.3]{cao17-OTtheorem},  so we will be somewhat brief and focus on the differences.   
    We observe that $X=\overline{X}\backslash D$ admits a complete metric $\omega_{\rm P}$ with Poincar\'e singularity type; cf.~\cite{Kobayashi84-KE-open}.  
    We take a covering of $\overline{X}$ by finite open sets $V_\alpha$ biholomorphic to the polydisc in $\mathbb{C}^n$. For each $\alpha$ we fix a local coordinates $(t,z^1_\alpha,\ldots,z^n_\alpha)$ such that $\overline{X}_0\cap V_\alpha=(t=0)$ and $D\cap V_\alpha=(z_\alpha^1\cdots z_\alpha^\ell=0)$. So that $X\cap V_\alpha\simeq (\Delta^*)^\ell\times\Delta^{n+1-\ell}$ and 
    $ X_0\cap V_\alpha\simeq (\Delta^*)^\ell\times\Delta^{n-\ell}$ where $\Delta$ is the unit disc in $\mathbb{C}$ and $\Delta^*=\Delta\backslash \{0\}$.
    Locally on $U_\alpha$, we can write $u=f_\alpha\otimes e_\alpha$ where $f_\alpha$ is a holomorphic function on $X_0\cap V_\alpha$. We write the Hermitian metric $h=e^{-\phi}=h_0e^{-\varphi}$ for $h_0$ a fixed smooth Hemitian metric and a function $\varphi$ on $\overline{X}$. 
Thanks to Demailly's approximation theorem, there exists an increasing sequence $\{h_k\}=\{h_0e^{-\varphi_k}\}$ of metrics with analytic singularities with curvature $i\Theta_{h_k}(L)\geq -\frac{\omega}{k}$.

\smallskip
    \textbf{Step 1.} We claim that there exists a smooth section $\Tilde u$ on $K_X+L|_X$ such that $\Tilde{u}|_{X_0}=u$, $\Bar{\partial} \Tilde{u}|_{X_0}=0$ and for some $\sigma>0$, 
    \[ \int_X\frac{|\Bar{\partial}\Tilde{u}|^2_{\omega_{\rm P},h_0}e^{-(1+\sigma)\varphi}}{|t|^2(\log|t|)^2} \,{\rm d}V_{X,\omega_{\rm P}} \leq C\int_{X_0} |u|^2_{\omega_{\rm P},h}\, {\rm d}V_{X_0,\omega_{\rm P}}. \]
Thanks to  the strong openness~\cite{Guan-Zhou15-strong-openness}, we can find $\sigma>0$ such that
\[\int_{X_0\cap V_\alpha}|u|^2_{\omega_{\rm P},h_0}e^{-(1+\sigma)\varphi}\, {\rm d}V_{X_0,\omega_{\rm P}} \leq  2\int_{X_0\cap V_\alpha}|u|^2_{\omega_{\rm P},h} \, {\rm d}V_{X_0,\omega_{\rm P}} \]
    We apply the usual version of the Ohsawa–Takegoshi theorem~\cite[Theorem 13.6]{Deamilly-book-analytic} with the weight $e^{-(1+\sigma)\varphi}$, we can find holomorphic sections $\Tilde f_\alpha$ of $K_X+L|_X$ on $U_\alpha$ extending $u$ such that 
    \[\int_{V_\alpha\backslash D}\frac{|\Tilde f_\alpha|^2_{\omega_{\rm P},h_0}e^{-(1+\sigma)\varphi}}{|t|^2(\log|t|)^2}\, {\rm d}V_{X,\omega_{\rm P}} \leq\int_{X_0\cap V_\alpha} |u|^2_{\omega_{\rm P},h} \,{\rm d} V_{X_0,\omega_{\rm P}}. \]
Let $\{\chi_\alpha\}$ be a partition of unity
subordinate to $\{V_\alpha\}$. We can check $\Tilde{u}:=\sum_\alpha\chi_\alpha \Tilde{f}_\alpha$ satisfies the claim.

\smallskip
\textbf{Step 2.}   
Set $\Psi=\log|t|^2$ and $g_m=\Bar{\partial}((1-b_m'\circ\Psi)\Tilde u)$ where $b_m$ is a $\mathcal{C}^1$ function defined on $\mathbb{R}$ such that \[b_m(s)=s \;\text{for}\; s\geq -m,\quad b_m''(s)=\mathbf{1}_{\{-m-1\leq s\leq -m\}}(s).\]
We claim that there exists a sequence $\{a_m\}\subset\mathbb{N} $ tending to $\infty$, $L$-valued locally
$L^2$-forms $\gamma_m$ and $\beta_m$ such that
\[\Bar{\partial}\gamma_m+(\frac{m}{a_m})^{\frac{1}{2}}\beta_m=g_m,\quad\lim_{m\to\infty}\frac{m}{a_m}=0 \]
and for $k\geq a_m$
\begin{equation}\label{eq: OT2}
   \varlimsup _{m\to\infty} \int_{X}{|\gamma_m|^2_{\omega_{\rm P},h_{a_m}}}\, {\rm d}V_{X,\omega_{\rm P}}\leq \pi r^2\int_{X_0}|u|^2_{\omega_{\rm P},h_0}e^{-\varphi_{k}}\,{\rm d}V_{X_0,\omega_{\rm P}}
\end{equation}
The proof of the claim follows line by line from the one in~\cite[p. 25]{cao17-OTtheorem}.
We note here that if $\varphi_k$ has singularities along the analytic set $Z_k$ then $X\backslash Z_k=\overline{X}\backslash(D\cup Z_k)$ has a complete K\"ahler metric (cf.~\cite[Lemma 12.9]{Deamilly-book-analytic}), so we can apply the $L^2$ existence theorem for solutions to $\Bar{\partial}$-equation; cf.~\cite[Proposition 13.4, Remark 13.5]{Deamilly-book-analytic}, also~\cite[Appendix]{cao17-OTtheorem}. While, in~\cite{cao17-OTtheorem}, $X$ is assumed to be weakly pseudoconvex, so that $X\backslash Z$ is a complete K\"ahler manifold for an analytic subset $Z$. The latter property is still true in our setting. 

  \smallskip
  \textbf{Step 3.} Up to passing to a subsequence, we assume that $((1-b_m'\circ\Psi)\tilde u-\gamma_m)_m$ 
converges weakly to some $U\in L^2(X,K_X+L|_X)_{\omega_p,h_0}$. It suffices to verify that $U$ is holomorphic and satisfies~\eqref{eq: OT-inequ}. 
Since $\Bar{\partial}((1-b_m'\circ\Psi)\tilde u-\gamma_m)=(m/a_m)^{\frac{1}{2}}\beta_m$ converges to zero and $\Bar\partial$ is a closed operator it follows that $F$ is holomorphic. The inequality \eqref{eq: OT-inequ} follows immediately from the one \eqref{eq: OT2} (cf. \cite[page 26]{cao17-OTtheorem}).

It remains to check that $U|_{X_0\cap V_\alpha}=u$. We solve the $\Bar{\partial}$-equation on $V_\alpha\backslash D$: $\bar\partial w_m=\beta_m$ and
\[\int_{V_\alpha\backslash D}|w_m|^2_{\omega_{\rm P},h_{a_m}}\, {\rm d}V_{X,\omega_{\rm P}}\leq C\int_{V_\alpha\backslash D}|\beta_m|^2_{\omega_{\rm P},h_{a_m}}\, {\rm d}V_{X,\omega_{\rm P}}\leq C_1 \]
for some uniform constant $C_1>0$. Then \[U_m:=(1-b_m'\circ\Psi)\Tilde{u}-\gamma_m-(\frac{m}{a_m})^{\frac{1}{2}}w_m \]
is a holomorphic function on $V_\alpha\backslash D$, and $U_m\to U$ weakly. Since $U_m|_{X_0\cap V_\alpha}=u$ we have $U|_{X_0\cap V_\alpha}=u$. 
\end{proof}
\begin{theorem}\label{thm: variation-psh2}
    Theorems~\ref{thm_variation} and
    \ref{thm: variation-psh} hold in the case when $X$ is a complex manifold fibered over a complex manifold $Y$, with  fiber $X_y=\overline{X_y}\backslash D$. 
\end{theorem}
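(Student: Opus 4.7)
The plan is to follow the proofs of Theorem \ref{thm_variation} and Theorem \ref{thm: variation-psh} essentially verbatim, replacing the use of the Ohsawa--Takegoshi extension Lemma \ref{lem: OT-theorem} at every occurrence by its quasi-projective counterpart Lemma \ref{OT-thm-quasi-proj}. Here we interpret $H^0(X_y, K_{X_y} + L|_{X_y})$ as the space of holomorphic $L|_{X_y}$-valued $(n,0)$-forms on $X_y = \overline{X}_y \setminus D$ that are $L^2$ against $e^{-\phi_L}$, and the fiberwise Bergman kernel $\K_t(z)$ on $K_{X/Y} + L$ is defined through \eqref{eq: Bergman-Kernel} using this norm.

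First I would establish the upper semi-continuity of $(t,z) \mapsto \log \K_t(z)$ on $X^\circ$. The same Demailly regularization trick applies: one approximates $\phi_L$ by a decreasing sequence of metrics $\phi_\varepsilon$ with $\ddc \phi_\varepsilon \geq -\lambda_\varepsilon \omega$; the continuity of $\K_{t,\varepsilon}$ for fixed $\varepsilon$ is a purely local statement unaffected by the presence of $D$, and the decreasing limit preserves upper semi-continuity. Next, to obtain the sub-mean inequality along a holomorphic disc $\Delta_r \subset Y^\circ$, I would take an extremal section $u_0 \in H^0(X_{y_0}, K_{X_{y_0}} + L|_{X_{y_0}})$ with $\|u_0\|^2 = 1$ realizing $\K_{y_0}(z)$, apply Lemma \ref{OT-thm-quasi-proj} to produce an extension $U \in H^0(p^{-1}(\Delta_r) \cap X,\, K_{X/Y} + L)$ with
\[
\frac{1}{\pi r^2}\int_{p^{-1}(\Delta_r)\cap X} U \wedge \bar U\, e^{-\phi_L} \leq \|u_0\|^2 = 1,
\]
and then combine the sub-mean inequality for the plurisubharmonic $\log |U'(t,z)|^2$ with Jensen's inequality and the extremal characterization of $\K_t(z)$ exactly as in the proof of Theorem \ref{thm_variation}. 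Extension across $X \setminus X^\circ$ is obtained by the same local argument: given $x \in V \cap X^\circ$ and an extremal $\sigma$ on $X_{p(x)}$, Lemma \ref{OT-thm-quasi-proj} produces an extension $\tilde\sigma$ on $p^{-1}(p(V)) \cap X$ with bounded $L^2$ norm, whence the sub-mean inequality uniformly bounds $\log \K$ from above near the singular fibers.

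For the analog of Theorem \ref{thm: variation-psh}, the iterative construction proceeds identically. The non-vanishing $H^0(X_y,(K_{X_y}+L_k|_{X_y})\otimes \mathcal{I}(\phi_{L_k}|_{X_y})) \neq 0$ at the initial step follows from H\"ormander's $L^2$ estimate under the bounded-geometry hypothesis on $(X_y,\omega_{{\rm KE},y})$, which by assumption (i) holds uniformly in $y$ on compact subsets of $Y^\circ$ in the quasi-projective setting (for example in the cusp-singularity case the underlying fiber geometry is complete with bounded geometry at any order). Applying the quasi-projective version of Theorem \ref{thm_variation} established in the previous paragraphs at each iteration shows that $\ddc \phi_{k+1} \geq 0$, and Theorem \ref{thm: construction-KE} applied fiberwise yields $\|\phi_k - \phi_{\rm KE}\|_{L^\infty(p^{-1}(U))} \leq C(U)/k$ on any compact $U \subset Y^\circ$, so the limit $\phi_{\rm KE}$ is a singular Hermitian metric on $K_{X/Y}|_{X^\circ}$ with positive curvature current; the extension across $X \setminus X^\circ$ under (iii) then follows from the upper-bound argument already used in the proof of Theorem \ref{thm: variation-psh}.

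The main obstacle has essentially been absorbed into Lemma \ref{OT-thm-quasi-proj}, whose proof handled both the non-compactness of fibers $X_y$ and the singularities along $D$ through the Poincar\'e metric $\omega_{\rm P}$ and Demailly's approximation scheme. Once that lemma is in place, the only delicate bookkeeping is to check that all $L^2$ norms and multiplier ideal sheaves are computed against the same weight $e^{-\phi_L}$, so that the extension obtained from Lemma \ref{OT-thm-quasi-proj} lies in the relevant weighted space of sections. This is straightforward to verify at each step since the definition of the Bergman kernel \eqref{eq_bergman_twist} and the norm on $E_y$ are compatible with the hypotheses of Lemma \ref{OT-thm-quasi-proj}.
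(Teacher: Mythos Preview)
Your proposal is correct and follows essentially the same route as the paper: replace Lemma~\ref{lem: OT-theorem} by the quasi-projective extension Lemma~\ref{OT-thm-quasi-proj} in the proof of Theorem~\ref{thm_variation}, and then rerun the iteration of Theorem~\ref{thm: variation-psh} verbatim. The paper's own proof is a two-sentence sketch saying exactly this; the only point it singles out that you leave implicit is that throughout Tsuji's iteration the metrics $\phi_k$ remain bounded from above (equivalently, of Poincar\'e type along $D$), which is what guarantees that the $L^2$ hypotheses of Lemma~\ref{OT-thm-quasi-proj} and the non-vanishing of the relevant section spaces persist at every step---in your write-up this is hidden inside the uniform estimate $\|\phi_k-\phi_{\rm KE}\|_{L^\infty}\le C(U)/k$ coming from Theorem~\ref{thm: construction-KE}.
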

\begin{proof} The statement in Theorem~\ref{thm_variation} for the family $p:X=\overline{X}\setminus D\rightarrow Y$ follows immediately from the global regularization result and the Ohsawa--Takegoshi theorem above. 

    We then argue as in Theorem~\ref{thm: variation-psh} to obtain a sequence of singular Hermitian metrics $\phi_{k}$ on $K_{X/Y}^k|_{X^\circ}$ with positive curvature current $T_k:=dd^c \phi_k$.  
    Since $\phi_k$ uniformly converges to $\phi_{\rm KE}$ locally, we have $\phi_{\rm KE}$ is locally plurisubharmonic on $X^\circ$. Hence $\phi_{\rm KE}$ defines a singular Hermitian metric on $(K_{X/Y})|_{X^\circ}$, with positive curvature current {$T:= dd^c \phi_{\rm KE}$}.  
\end{proof}

\bibliographystyle{alpha}
	\bibliography{file}	
\end{document}